\newcommand{\nsets}{{\mathcal P}^*}
\def\period{period}%
\newtheorem{theoalpha}{Theorem}
\newcommand{\Det}[1]{ |{#1}|}
\def\lemref{Lemma~\ref}
\def\Null{{\operatorname{Null}}}
\def\sgn{{\operatorname{sgn}}}
\def\mcA{{\mathcal A}}
\def\mcM{{\mathcal M}}
\def\mcH{{\mathcal H}}
\def\mcV{{\mathcal V}}
\def\Ach{{\mathcal A}_{\operatorname{char}}}
\def\a{\alpha}
\def\dplus{\overset{\circ}{+}}
\renewcommand{\geq}{\geqslant}
\renewcommand{\leq}{\leqslant}
\renewcommand{\ge}{\geqslant}
\renewcommand{\le}{\leqslant}
\renewcommand{\preceq}{\preccurlyeq}
\renewcommand{\succeq}{\succcurlyeq}
\newcommand{\A}[1]{{\rm A#1}}
\newtheorem{theorem}{Theorem}[section]
\newtheorem{proposition}[theorem]{Proposition}
\theoremstyle{definition}
\newtheorem{definition}[theorem]{Definition}
\theoremstyle{plain}
\newtheorem{lemma}[theorem]{Lemma}
\newtheorem{corollary}[theorem]{Corollary}
\theoremstyle{remark}
\newtheorem{example}[theorem]{Example}
\newtheorem{remark}[theorem]{Remark}
\theoremstyle{plain}
\theoremstyle{remark}
\newcommand{\Real}{\mathbb R}
\newcommand{\Net}{\mathbb N}
\newcommand{\one}{\mathbb{1}}
\newcommand{\zero}{\mathbb{0}}
\newcommand{\trop}[1]{\mathcal{#1}}
\newcommand{\tG}{\trop{G}}
\newcommand{\tT}{\trop{T}}
\newcommand{\id}{\inva{\aad}}
    \newenvironment{proof}{
    \smallskip
    \noindent\emph{Proof.}}{\hfill\(\Box\)
    \bigskip
    } \fi
\definecolor{lgray}{gray}{0.90}
\def\Dref{Definition~\ref}
\def\Eref{Example~\ref}
\def\Tref{Theorem~\ref}
\def\Hzero{\zero_{\mathcal{H}}}
\def\mcF{\mathcal F}
\def\mcI{\mathcal I}
\def\ctw{\cdot_{\operatorname{tw}}}
\def\({\left(}
\def\){\right)}
\def\Z{{\mathbb Z}}
\def\Q{{\mathbb Q}}
\def\C{{\mathbb C}}
\def\sg{\sigma}
\def\pipe{{\underset{{\ \, }}{\mid}}}
\def\vsemifield0{$\nu$-semifield$^\dagger$}
\def\vsemiring0{$\nu$-semiring$^\dagger$}
\def\pipe1{{\underset{{1}}{\mid}}}
\def\lmod1{\mathrel  \pipe1  \joinrel \joinrel =}
\newcommand{\nablaT}{\hyperref[bal1]{\nabla_T\, }}
\newcommand{\nablaminus}{\hyperref[bal1]{\nabla_{\hspace{-1.5pt}(-)}}}
\newcommand{\triple}{\hyperref[negmap]{triple}}
\newcommand{\nabladag}{\hyperref[bal1]{\nabla_{\dag}}}
\newcommand{\preceqzero}{\hyperref[Su1]{\preceq_0}}
\newcommand{\PropertyN}{\hyperref[propN]{Property~N}}
\def\CFunFF1{\operatorname{CFun} (F,F)}
\def\semiring0{semiring$^{\dagger}$}
\def\Semiring0{Semiring$^{\dagger}$}
\def\Semirings0{Semirings$^{\dagger}$}
\def\semidomain0{semidomain$^{\dagger}$}
\def\semifield0{semifield$^{\dagger}$}
\def\semifields0{semifields$^{\dagger}$}
\def\vsemifields0{$\nu$-semifields$^{\dagger}$}
\def\domain0{domain$^{\dagger}$}
\def\predomain0{pre-domain$^{\dagger}$}
\def\predomains0{pre-domains$^{\dagger}$}
\def\domains0{domains$^{\dagger}$}
\def\vdomains0{$\nu$-domains$^{\dagger}$}
\def\domains0{domains$^\dagger$}
\def\tGz{\tG_{\zero}}
\def\bfv{\textbf{v}}
\newcommand{\ekind}[1]{\renewcommand{\theenumi}{(#1{enumi})}%
\renewcommand{\labelenumi}{(#1{enumi})}}
\newcommand{\eikind}[1]{\renewcommand{\theenumii}{--(#1{enumii})}%
\renewcommand{\labelenumii}{(#1{enumii})}}
\def\pipe{{\underset{{\tG}}{\mid}}}
\def\lmod{\mathrel  \pipe \joinrel \joinrel =}
\def\pipe{{\underset{{\tG}}{\mid}}}
\def\ghost0{\operatorname{ghost}}
\def\diag{\operatorname{diag}}
\def\a{\alpha}
\theoremstyle{plain}
\theoremstyle{remark}
\newtheorem{rem}[theorem]{Remark}
\newtheorem{prop*}{proposition}
\theoremstyle{remark}
\newtheorem*{examp*}{Example}
\newtheorem*{examples*}{Examples}
\newtheorem*{remark*}{Remark}
\newtheorem{INote}[theorem]{Important Note}
\theoremstyle{definition}
\newtheorem*{definition*}{Definition}
\newtheorem{assump}[theorem]{Assumption}
\newtheorem*{note*}{Note}
\def\R{\Real}
\def\la{\lambda}
\def\tT{\mathcal T}
\def\tTz{\tT_\zero}
\numberwithin{equation}{section}
\def\M0{M_{\zero}}
\def\perm{\operatorname{perm}}
\def\id{\operatorname{id}}
\def\Rref{Remark~\ref}
\def\tAz{\mathcal A^\circ}
\def\PS{P}
\def\semirings0{semirings$^\dagger$}
\newcommand{\nPS}[1]{\PS_{(!#1)}}
\newcommand{\nPSo}[1]{\nPS{\one}}
\def\Per{\operatorname{perm}}
\newcommand{\absl}[1]{||{#1}||}
\newcommand{\adj}[1]{\operatorname{adj}({#1})}
\newcommand{\adjsym}[1]{\operatorname{adj}^{\operatorname{doub}}({#1})}
\begin{document}


\title[Linear algebra over semiring pairs]
{Linear algebra over semiring pairs}


\author[M.~Akian ]{Marianne Akian }
\address{Marianne Akian, INRIA and CMAP, \'Ecole polytechnique CNRS. Address:
CMAP, \'Ecole Polytechnique, Route de Saclay, 91128 Palaiseau Cedex,
France} \email{marianne.akian@inria.fr }

\author[S.~Gaubert]{Stephane~Gaubert}
\address{Stephane~Gaubert, INRIA and CMAP, \'Ecole polytechnique CNRS. Address:
CMAP, \'Ecole Polytechnique, Route de Saclay, 91128 Palaiseau Cedex,
France}
 \email{stephane.gaubert@inria.fr }

%
\author[L.~Rowen]{Louis Rowen}
\address{Louis Rowen,
Department of Mathematics, Bar-Ilan University, Ramat-Gan 52900,
Israel} \email{rowen@math.biu.ac.il}

\subjclass[2020]{Primary   06F05,   15A03;
Secondary  14T10,  15A80, 15A30, 15A99, 16Y20, 16Y60, 20N20
 .}

\date{\today}


\keywords{balance relation, bipotent, Cramer's rule, hypergroup, hyperfield, hyperpair, Jacobi's
algorithm, linear algebra, submatrix rank, metatangible, $\mcA_0$-bipotent, negation map,
 semifield, semigroup, semiring, supertropical algebra,
surpassing relation, doubling,
  pair,
 tropical.} 
\thanks{The authors  thank O.~Lorscheid for helpful comments.}
\thanks{The authors also thank J.~Jun for pointing out references \cite{BZ} and \cite{MasM}.}
\thanks{The research of the third author was supported by the ISF grant 1994/20 and the Anshel Peffer Chair.}



\begin{abstract}This is part of an ongoing project  to find a general  algebraic framework for
semiring theory. The structure theory of semirings is quite challenging, largely because of the lack of negation, and such basic properties such as unique factorization of polynomials, multiplicativity of determinants, and the characteristic polynomial of a matrix, all fail. (In fact in the max-plus algebra, the sum of two nonzero elements is {\it never} zero!) Consequently  $\zero$ is replaced by a distinguished $\tT$-submodule $\mcA_0$ of $\mcA,$ and $(\mcA,\mcA_0)$ is called a {\it pair}.

This paper treats linear algebra over  a (not necessarily distributive) semiring pair,
with  a   range of applications to tropical algebra as well as related
areas such as hyperrings and fuzzy rings.  First we present   pairs with their morphisms,
called ``weak morphisms;''  we generalize earlier results about triples to pairs in the first three sections. We pay special attention to supertropical pairs,  hyperpairs,
and the doubling functor.

Then  we turn
to matrices and the question of whether the row rank,
column rank, and submatrix rank of a matrix are equal. The
submatrix rank is less than or equal to the row rank and the column
rank in many cases, including ``metatangible pairs'' with unique negation, but there is a counterexample to equality, discovered some time ago by the second author, which we provide in a more general setting (``pairs of
the second kind'') that includes the hyperfield of signs.  We do
find situations when equality holds, encompassing results by Akian,
Gaubert, Guterman, Izhakian, Knebusch, and Rowen, including   versions of  Cramer's rule. We pay special attention to the question of Baker and Zhang whether $n+1$ vectors of length $n$ need be dependent.

At the conclusion of the main part, we consider surpassing relations, which permit us to tighten our results.  The    categorical setting is given in the appendix.
\end{abstract}

\maketitle


  {\small \tableofcontents}



\section{Introduction}

\numberwithin{equation}{section}  In \cite{Row19} and \cite[Definition~2.15, Definition~2.34]{Row21},  general
algebraic frameworks called  a ``triple'' 
and a ``system'' 
were introduced, in order to unify
various algebraic theories including ``classical'' algebra, tropical
algebra, the algebra underlying $F_1$-geometry, hyperrings \cite{krasner,Vi,CC}, 
and fuzzy rings \cite{DrW}.
The connection to  hyperrings and fuzzy rings was explored further in \cite{AGR2}.

The goal of putting tropical mathematics into
a framework that can be understood in general algebraic terms
related to classical algebra  is particularly apt for matrix theory and linear algebra,
given results in
\cite{AGG1,AGG2,Ga,IzhakianRowen2009TropicalRank,IzhakianRowen2008Matrices,IzhakianRowen2008Matrices2},
which mysteriously parallel classical results from matrix theory and
linear algebra, specifically in the supertropical setting.
Similar approaches were made by M.~Baker and O.~Lorscheid who introduced other general structures like ``blueprints" ~\cite{Lor1,Lor2} and ``tracts'' cf.~\cite{BB2,BL,BZ}.

We aim for the most fundamental structure over which       linear algebra can be studied  meaningfully. For starters we need an abelian group $(\mcA,+)$ with some subset $\tTz$ of scalars acting on it. In order to utilize determinants of matrices, we want $\tTz$ to be a multiplicative monoid (with $\zero$ adjoined). We also need some notion of singularity.

  \cite{Row21} and \cite{AGR2}   also made use of a negation map $(-)$ and ``surpassing relation'' $\preceq.$
Recently a  theory of $\tT$-\textbf{pairs} was initiated in \cite{JMR1}, in which, surprisingly, one
can forego with both $(-)$ and $(\preceq)$, merely having a $\tT$-module $\mcA$ over a set $\tT$ and a $\tT$-invariant subset $\mcA_0$ which plays the role of the set of zero elements, not necessarily endowed with any extra structure.
 In this paper, we present this theory
in a general form, which also encompasses tracts, and a generalization of hyperrings in \S\ref{hyps0}, with emphasis on its linear algebra.

 Dependence of vectors and singularity of matrices have significance in the context of pairs, justifying the effort to develop these aspects.  We start this paper by developing pairs in  context for the theory.
As has become clear in the hyperring and fuzzy ring literature, the category of pairs   can be supplied with two types of morphisms, the ``weak morphisms''  used in most of the body of this paper, and ``$\preceq$-morphisms'' which arise  from a ``surpassing relation'' $(\preceq)$ in the structure. Weak morphisms  are appropriate for linear algebra viewed via matrix theory, whereas the
$\preceq$-morphisms, more appropriate to the module-theoretical viewpoint,  are described in~\S\ref{sd}.

 The transition from classical linear algebra to pairs
turns out to be rather delicate, and most of our effort here is to
investigate the connection between the row (and column) rank and the submatrix rank
of a set of vectors over a pair.

\subsubsection{The shape of the paper}

After reviewing basic concepts in \S\ref{Ab}, including  modules over a set,  we turn in \S\ref{prs} to our main structural interest, ``pairs'' (and their ``weak morphisms''), and introduce the properties to be required of pairs.  We present ways of relating elements, especially ``balance relations''  and ``surpassing relations.'' The most significant class of pairs for us are the ``metatangible'' pairs, whose structure is given in \Cref{theoalphaA}. In  \Cref{theoalphaD}, we show how the definition of tangible balancing in metatangible pairs generalizes balancing for triples as defined in \cite{Row21}.  Also we consider ``(weak) moduli,'' which exist for metatangible pairs, by \Cref{theoalphaB}.    We prove these three theorems in \S\ref{modtrp3}, using    preparatory results about metatangible pairs.

In \Cref{pairex} we bring in  major examples, some of which  (\Cref{metex}) are not triples. ``Supertropical pairs,'' generalizing \cite{IR}, are given in~\Cref{sut}.
The other main (not necessarily metatangible) construction of pairs is via hyperrings, cf.~\Cref{hyps0}. Of special interest is a general version of a construction of Krasner, cf.~\Cref{theoalphaE}.

To obtain a negation map, one can employ
``doubling,''  given in \S\ref{Making}   (described  categorically later in \Cref{theoalphaF}),   reminiscent of Grothendieck's construction of the integers from the natural numbers.

In  \S\ref{lina}, we turn to the main subject of this paper, linear algebra of pairs. Starting with
matrices and their determinants in  \S\ref{matra0}, we give  versions of classical results results, such as the generalized Laplace identity   and the Cauchy-Binet formula (\Cref{theoalphaG}) and the Cayley-Hamilton Theorem (\Cref{theoalphaH}). 
Next, we introduce dependence of vectors over a pair,   bringing in the main hypotheses explored in  this paper, Conditions \A{1}-\A{6}, to compare the different notions of rank.  Counterexamples are given for Conditions   \A{2}, \A{3} and \A{4}, cf.~\Cref{theoalphaI}, \Cref{countq}, and \Cref{hkrashy22}, but also we present an assortment of theorems verifying   these conditions in assorted situations. Basic tools include Cramer's rule (\Cref{theoalphaJ10}, \Cref{theoalphaJ2}, and \Cref{theoalphaJ}).

We complete the proofs of our  theorems about rank in \S\ref{Jacobi1}, also obtaining, under extra assumptions, the Jacobi algorithm for pairs.


Although the most of our results involve a symmetric  ``balance relation,'' several theorems can be framed better in terms of a surpassing relation $(\preceq)$. The
$\preceq$-theory is stronger, since, by \Cref{impl}, $b_1\preceqzero b_2$ implies $b_1$ and $b_2$ are balanced, although the converse obviously fails since $\preceq$ is not symmetric.
 In \S\ref{sd} we  define dependence of vectors in the context of $(\preceq)$.

In Appendix A we expand on categories arising in this paper.

\section{Algebraic background}\label{Ab}$ $

$\Net^+$ denotes the positive natural numbers, and $\Net = \Net^+ \cup{\zero}.$

\begin{definition} $ $
\begin{enumerate}    \item A \textbf{semigroup} is a set with a binary
associative operation.  An \textbf{additive semigroup} is a commutative semigroup with the operation denoted by "$+$" and a zero element $\zero.$

 \item  An additive semigroup $(\mathcal{A},+,\zero)$
is
\textbf{idempotent} if $b + b = b$ for all $b\in \mathcal{A}.$
$(\mathcal{A},+,\zero)$
is  \textbf{bipotent} if $b_1+b_2 \in \{b_1,b_2\}$ for all $b_1, b_2 \in \mcA.$


 \item    A \textbf{semiring}
\cite{golan92} satisfies all the properties of a ring (including associativity and distributivity of multiplication over addition), but without
negation. We shall denote multiplication by concatenation, and assume that semirings have a $\zero$ element that is additively neutral and also is   multiplicatively absorbing, and have a unit element~$\one$. When speaking about idempotence in a
semiring, we always mean with respect to the additive structure.

 \item    An     \textbf{nd-semiring} satisfies all of the properties of semirings except  distributivity, i.e., it is both an additive semigroup and a multiplicative monoid.
\end{enumerate}
 \end{definition}

\subsection{Modules over a set}$ $

  \begin{definition}\label{def-gen}
  We   assume that $(\tT,\one)$   is  a  set with a distinguished element $\one$.
\begin{enumerate}
  \item
A (left) \textbf{module}  over $\tT$, or $\tT$-module, 
is an additive semigroup
$(\mathcal A,+,\zero_\mcA)$ together with a  (left)   $\tT$-action $\tT\times \mathcal A \to \mathcal A$ (denoted  as
concatenation), which is
\begin{enumerate}
 \item zero absorbing, i.e. $a \zero _\mcA = \zero_\mcA, \  \text{for all}\; a \in \tT.$

  \item \textbf{distributive},  in the sense that
$$a(b_1+b_2) = ab_1 +ab_2,\quad \text{for all}\; a \in \tT,\; b_i \in \mathcal A.$$

\item  $\one b= b$ for all $b\in \mcA.$

\end{enumerate}

  Define formally $\tTz = \tT \cup \{\zero\}$, and view $\mcA$ as a $\tTz$-module by declaring $\zero a= \zero_\mcA$ for all $a\in \mcA$.

Right modules are defined analogously.

  \item  We call  $\mcA$  a  $\tT$-\textbf{mon-module} when $\tT$  is a  multiplicative monoid
 $(\tT,\cdot,\one)$ without $\zero$, and $(a_1a_2)b = a_1(a_2b)$ for $a_i \in \tT,$ $b\in \mcA$.

\item A  \textbf{bimodule} over $\tT$, or $\tT$-bimodule,
 is a semigroup $(\mathcal A,+,\zero_\mcA)$ which is  a left and right module over $\tT,$    satisfying $(a_1b)a_2 = a_1(ba_2) $ {for all} $a_i \in \tT,\; b \in \mathcal A.$

  \item A $\tT$-\textbf{sub-bimodule} of $\mcA$ is a sub-semigroup of $\mcA$ which is closed under the $\tT$-actions.

\item We say that $\mathcal B$ is a \textbf{spanning subset}  of $\mcA$ if $\mathcal B\subset \mcA$ and the semigroup $(\mcA,+, \zero_\mcA)$ is spanned by~$\mathcal B$.

 \item   A $\tTz$-bimodule $\mcA$    is \textbf{weakly admissible} if $\tTz \subseteq \mcA,$  $\zero_\mcA = \zero$, and $a(\one)=a=(\one) a$ for all $a\in \tT$, where for any elements $a,a'$ of $\tT$, $a(a')$ and $(a')a$ mean the left and right $\tT$-module actions of $a\in \tT$ on the element $a'$
 considered as an element of $\mcA$. In this case, we call  the elements of $\tT$ \textbf{tangible}.

 \item   A weakly admissible bimodule $\mcA$ over $\tT$ is \textbf{admissible} if $\tTz$ is a spanning subset of $\mcA.$


  \item  The \textbf{height} of an
element $c $ in an admissible $\tT$-bimodule $\mathcal A$ is the minimal $t$ such that $c = \sum
_{i=1}^t a_i$ with each $a_i \in \tT.$ (We say that $\zero$ has
height 0.) The \textbf{height} of $\mathcal A$   is the
supremum of the heights of its elements. (It could be $\infty$.)

\item A \textbf{$\tT$-semiring} (resp.~\textbf{$\tT$-nd-semiring})
 is a semiring (resp.~nd-semiring) $(\mathcal A,+,\zero,\cdot,\one)$ which is also a
 bimodule over $\tT\subset \mcA$, such that the $\tT$-module actions and multiplication within $\mcA$ are the same.
   \item An  \textbf{ideal} of a  {semiring} (resp.~nd-semiring) $\mcA$ is  a  sub-semigroup $\mcI$ of $(\mcA,+,\zero)$ satisfying $by,yb\in\mcI$ for all $b\in\mcA,$ $y\in\mcI.$
   \end{enumerate}
\end{definition}
\newcommand{\admissible}{\hyperref[def-gen]{admissible}}
\begin{rem}
    For a weakly \admissible\  bimodule  $\mcA$, we have
that, for all $a_1,a_2\in\tT$, $a_1(a_2)$,<
the left action of $a_1$ on $a_2$ equals $(a_1)a_2$,
the right action of $a_2$ on $a_1$. This justifies the notation
$a_1a_2$ for both of them. (However, we do not necessarily have $a_1a_2\in \tT$.) This makes multiplication on $\tT$ associative in the sense that $a_1(a_2a_3) = (a_1a_2)a_3$,
since $\mcA$ is a bimodule.
\end{rem}

There is a method of making an \admissible\ $\tT$-module into a $\tT$-semiring.
\begin{proposition}(as in  \cite[Prop.\ 3.1]{AGR2})\label{fix1} If $\mcA$ is a weakly \admissible\ $\tT$-module, then take  $\mcA^\natural$ to be the additive sub-semigroup of $\mcA$ spanned by $\tT$, with the multiplication:
\begin{equation}\label{eq-mult}
\left(\sum_i
 a_i \right)\left(\sum_j a_j'\right) = \sum_{i,j}
 (a_ia_j')\end{equation}
In particular if $\mcA$ is \admissible, then  $\mcA^\natural$
has the same $\tT$-module structure as $\mcA.$
\end{proposition}

\subsubsection{The characteristic of a weakly \admissible\ $\tT$-module}$ $


 \begin{definition}$ $
 \begin{enumerate}
     \item  In a weakly \admissible\ $\tT$-module $\mathcal A$, $\Ach$ is the sub-semigroup of $(\mcA,+)$ generated by~$\one.$
     Moreover,
      write  $\mathbf k := k\one:= \one +\cdots +\one$, $k$ times, for all  $k\in \Net$; then $\Ach=\{\mathbf k\mid k\in \Net\} $.
     \item
      $\mathcal A$  has \textbf{characteristic~$(p,q)$} with $p\geq 1$ and $q\geq 0$
if   $\mathbf p + \mathbf q = \mathbf q$, with $p+q$  minimal such. We say that $\mathcal A$ has \textbf{characteristic
~$0$} if there are no such numbers $p,q$.
\item  When $\mathcal A$  has characteristic~$(p,q)$, the smallest integer $m\geq 1$ such that  $\mathbf m + \mathbf m = \mathbf m$ is called the \textbf{\period} of $\mcA$.
 \end{enumerate}
 \end{definition}
\begin{rem}

$(\Ach,+,\zero)$ is a sub-semigroup.
By the theory of finite semigroups (frying pan lemma, see for
instance \cite{semigroups1}), $p+q$ is the minimal number such that
${\mathbf 0},\ldots, \mathbf{p+q-1}$ are distinct,
so it is also the cardinality of $\Ach$.
Also there is a unique idempotent element $\mathbf m$ of the semigroup;
   $m$ is the smallest multiple of $p$ which is at least~$q$.
    \end{rem}

\begin{example}
   $ $
  Given $p,q$, define the semiring $\Net_{p,q} := \{\mathbf 0, \dots, \mathbf {p+q-1} \},$ with usual addition coupled with the rule $\mathbf {(p+q-1)} + \mathbf 1 = \mathbf q.$
    $\Net_{p,q}$ has characteristic $(p,q),$ since $\mathbf p+\mathbf q =( \mathbf {p+q-1}) + \mathbf 1 = \mathbf q. $

    $\Net_{p,0}$ is isomorphic to the ring of integers modulo $p.$
\end{example}

 \begin{rem}
In general,
       $\Ach$ is additively isomorphic either to $\Net$ or $\Net_{p,q}$, where $(p,q)$ is the characteristic of $\mcA$. (The proof is an easy exercise.)
       If $\mathcal A$ is also a semiring, then
       the isomorphisms are also semiring isomorphisms.
 \end{rem}

 In the special case that  $\mathcal{A}$ is a semifield, meaning that  $\mathcal{A}$ is a semiring and all   non-zero elements of~$\mathcal{A}$ are invertible, it follows from Theorem~1.4 of~\cite{tahar} that the characteristic of $\mathcal{A}$ is either $0$ or $(1,1)$
 or of the form $(p,0)$ for $p$ prime. In that way, in the case of semifields, the number  $p$ coincides with the standard notion of the characteristic of a semifield, which either is $0$ or $1$,
 or else it is a prime number and then $\mathcal{A}$ is a field.

  Any nontrivial idempotent semigroup has characteristic $(1,1).$
  In the tropical literature, often $\Ach$ is the idempotent Boolean semifield $\mathbf B = \{ \zero, \one\}$, in which case $\mcA$ is called a $\mathbf B $-algebra. In the supertropical case \cite{IR}, $\mcA$ has characteristic $(1,2).$

\begin{definition}\label{ZSF0}
    A semiring $\mcA$ is \textbf{zero sum free}
(ZSF) if $b_1+b_2=\zero$ implies $b_1=b_2=\zero.$\end{definition}

    We thank Kalina Mincheva for pointing out that all idempotent semirings are ZSF. Likewise we have:

\begin{proposition}
       If $ A$ has characteristic $ (1,q)$ for some $q$, then $A$ is ZSF.
\end{proposition}
\begin{proof}   Suppose $a+b = \zero.$
 First we claim that $qa+qb =\zero.$ Indeed, $qa+qb = a+(q-1)a + (q-1)b +b = a+b,$ by induction.

 But now $a = a +(qa+qb)= (a+qa)+qb = qa+qb = \zero.$
\end{proof}
\subsubsection{Pre-ordered and ordered modules}$ $

A \textbf{pre-order} is a reflexive and transitive relation. A
\textbf{partial order} (PO) is an antisymmetric  pre-order; an
\textbf{order} is a total PO.

\begin{definition}\label{modu-order} A \textbf{pre-ordered (resp.~PO, ordered) $\tT$-module} is a
$\tT$-module  $\mathcal A$ with a
pre-order relation $\le$ on $\mcA$, and, and for $\mcA$ weakly \admissible, a
\textbf{$\tT$-module pre-order} (resp.~PO, order)
relation, also denoted $\le$,  which is defined as satisfying the following, for $a, a'\in \tT$ and
$b,b_i,b_i' \in \mathcal A$:
\begin{enumerate}

\item If $b_i \le b_i'$ for $i= 1,2$, then  $b_1 + b_2 \le b_1'
   + b_2'.$
    \item  If   $b_1 \le b_1'$ then $a b_1 \le ab_1'.$
       \item  If  $a \le a'$ for $a,a'\in\tT$,  then $a b\le a' b.$
\end{enumerate}
\end{definition}

\begin{rem}\label{modu-order2}
    If $a_i \le a_i' \in \tT$ and $b_i\le b_i' \in \mcA,$
    then $\sum a_i b_i \le \sum a_i' b_i',$ by an easy induction on Definition~\ref{modu-order}(ii),(iii).
\end{rem}

Semirings may be less familiar to the reader than ordered semigroups, so we next recall
\textbf{Green's identification}.

\begin{rem}
 \label{Gre}$ $
 \begin{enumerate}
     \item  Any  abelian semigroup $(\tG,+)$ has the  partial order
     given by $a \le b$ iff $a=b$  or $a+b=b.$ This is a total order iff the semigroup is bipotent.

   \item   More generally,  recall  that a (sup)-\textbf{semilattice} is a set $\tG$ with a sup function
$\vee$. We can get a semilattice from an idempotent semigroup by defining $a \vee b = a+b,$ and in the other direction, any semilattice defines an idempotent semigroup.

When $\tG$ is a multiplicative monoid compatible with a sup $\vee,$ we get
an idempotent semiring $(\tG\cup\{\zero\},\vee, \cdot)$
by adjoining to $\tG$ a bottom element denoted by $\zero$.
 \end{enumerate}
\end{rem}

 \section{Pairs}\label{prs}

Before getting to  our main results, we need to introduce the main concepts of the theory of pairs, bringing in the properties that we need. We follow \cite{JMR1}, where pairs were defined in the case of $\tT$-nd-semirings, semirings, and modules. After that,  we   state several structure theorems in this section, with proofs further on.

 \begin{definition}\label{symsyst} $ $
 \begin{enumerate}
     \item A
 \textbf{ $\tT$-pair}  $(\mcA,\mcA_0)$ is a  bimodule $\mcA$ over $\tT$ together with a sub-bimodule
   $\mcA_0$. 

We omit $\tT$ in the notation when $\tT$ is understood, and write \textbf{pair} for $\tT$-pair.

             \item A
 \textbf{(weakly) \admissible\ pair}  is a pair  $(\mcA,\mcA_0)$ such that $\mcA$ is a (weakly) \admissible\ $\tT$-bimodule with
 $\mcA_0\cap \tTz =\{ \zero\}$.

     \item  When   $\mcA$ also is a $\tT$-mon-module, we call $(\mcA,\mcA_0)$ a \textbf{mon-pair}.  When in addition $\tT$ is a multiplicative group we call $(\mcA,\mcA_0)$  a
 \textbf{gp-pair}.

\item
A weakly \admissible\  pair $(\mcA,\mcA_0)$ is of the \textbf{first kind} if
$\one + \one \in \mcA_0$  and of the
\textbf{second kind} if  $a + a \notin \mcA_0$ for all $a\in \tT$.

   \item  Our maps $f$ from a $\tT$-bimodule $\mathcal A$ to a $\tT'$-bimodule  $\mathcal A'  $  always will
 satisfy
\begin{enumerate}
    \item $f (\tT) \subseteq \tT',$
\item $f(\zero) = \zero',$  and $f(\one) = \one'.$
\end{enumerate} The map $f$ is \textbf{multiplicative} if    $f(ab) = f(a) f(b)$ and
   $f(ba) = f(b) f(a)$ for all~$a\in \tT, \ b \in \mcA.$
   \item\label{weak-morphism} A \textbf{weak morphism of pairs} $(\mcA,\mcA_0)\to (\mcA',\mcA'_0)$ is a multiplicative map $f:\mcA\to \mcA'$ satisfying $\sum_{i=1}^n b_i \in \mcA_0$ implies $\sum _{i=1}^n f(b_i)\in \mcA'_0$, for $b_i\in \mcA$. (In particular $f(\mcA_0)\subseteq \mcA_0'.$)


 \item An  \textbf{nd-semiring pair} is an
 \admissible\ mon-pair $(\mcA,\mcA_0)$ such that  $\mcA$ is an nd-semiring.

  \item A  \textbf{semiring pair} is an
 \admissible\ mon-pair $(\mcA,\mcA_0)$ such that  $\mcA$ is a  semiring. \footnote{When one wants matrices over an \admissible\ semiring pair to be an \admissible\ semiring pair, one may weaken the definition of mon-modules, and mon-pairs, in order to
 permit
 straightforward sets of tangible elements $\tT^{\mathcal M}$ of the semiring of matrices
 which have zero-divisors
 (such as the matrix units over $\tT$ together with the identity matrix), but such that $\tTz^{\mathcal M}$
 is a monoid.}

   \end{enumerate}
\end{definition}




\begin{rem}\label{tr1a}
    Any $\tT$-module $\mcA$ can be viewed trivially as a pair by putting $\mcA_0 = \{\zero\}.$
\end{rem}

 For the remainder of this paper, we make the following assumptions:
\begin{assump}\label{Note1}\

   \begin{enumerate} \item $(\mcA,\mcA_0)$ is a  weakly \admissible\ pair.


       \item
     If   $a b \in \mcA_0$  or $ba  \in \mcA_0$, for  $a\in \tT$ and $b\in \mcA$, then $b\in \mcA_0.$
          \item If $a b _1= ab_2$ or $b_1a =b_2a$, for  $a\in \tT$ and $b_1,b_2\in \mcA$, then  $b_1=b_2.$ In particular $\tT$ is cancellative.

   \end{enumerate}
In view of (ii) and (iii), when  $(\mcA,\mcA_0)$ is a  mon-pair we may localize at $\tT$ and assume that $(\mcA,\mcA_0)$  is a gp-pair.
\end{assump}

\subsection{Property~N and negation maps}

   In nontrivial pairs, $\mcA_0$ takes the place of $\zero$. The significance is that since $\tT$-bimodules need not have negation (for example, $\Net$), $\zero$ has no significant role except as a place marker in linear algebra. However, we need some weaker property than negation  to proceed.

\subsubsection{Property~N}\label{subsec-PN}$ $

For every $b\in \mcA$, we say that $b^\dag$ is a \textbf{quasi-negative} of $b$,
if $b +b^\dag\in \mcA_0$,
with $b^\dag\in \tTz$ when $b\in\tTz$.
Note that a quasi-negative need not be unique.
\begin{definition}\label{propN0}$ $
We say that
$(\mcA,\mcA_0)$ satisfies
 \textbf{weak Property~N}  if  $\one$ has a quasi-negative $\one^\dag$ with
$\one^\dag \tT\subseteq \tT,$ and $\one^\dag b = b \one^\dag$ for each $b\in \mcA$.  In this case, we fix $\one^\dag$, and define $e:= \one+\one^\dag\in \mcA_0.$
Then, we denote $b^\dag = b\one^\dag$,  and $b^\circ = b+ b^\dag $, for all $b\in \mcA.$  Let $\mcA^\circ = \{b^\circ : b\in \mcA\},$ and  $\tT^\circ = \{a^\circ : a\in \tT\}.$ Also define
$e^+ := e + \one.$ \end{definition}
\newcommand{\eplus}{\hyperref[propN0]{\ensuremath{e^{+}}}}
\newcommand{\e}{\hyperref[propN0]{\ensuremath{e}}}

\begin{lemma}\label{ep} $ $ For all $a\in \tT$ and $b, b_i\in \mcA$,
\begin{enumerate}
    \item  $(\sum b_i)^\circ = \sum b_i ^\circ $.
 \item
  $(a b)^\circ =a  b^\circ$ and $(ba )^\circ =  b^\circ  a$.
  \item  $a^\circ =a e=e a \in \mcA_0$.
  \item $\tT^\circ = \{a^\circ : a\in \tT\}=\tT e=e\tT.$
\end{enumerate}
\end{lemma}
\begin{proof} (i)
    $(\sum b_i)^\circ = \sum b_i  + (\sum b_i)\one^\dag  = \sum b_i  + \sum b_i \one^\dag =  \sum (b_i +b_i\one^\dag) =  \sum b_i ^\circ .$

(ii) $(ab)^\circ = ab+ (ab) \one^\dag = ab+ a (b \one^\dag)= a(b+ b \one^\dag)=ab^\circ$, and similarly $ (ba)^\circ = b^\circ a.$

(iii) Take $b=1$ in (ii).
\end{proof}
 According to Definition~\ref{propN0}, $\one^\dag$ and $e$ need not be uniquely defined.

 (iv) Clear.

 \begin{rem}\
 \begin{enumerate}
     \item For any pair $(\mcA,\mcA_0)$ of the first kind, we can always take $\one^\dag = \one$ to see that $(\mcA,\mcA_0)$ satisfies
 {weak Property~N}.
     \item In the above lemma, (i) implies that $\mcA^\circ$ is closed under addition. 
 \end{enumerate}
 \end{rem}

\begin{definition}\label{propN}$ $ A pair $(\mcA,\mcA_0)$   satisfying weak Property~N  satisfies
 \textbf{Property~N} when   $ a_1 + a_2 = a_1^\circ $ for each $a_1,a_2\in \tT$ such that  $a_1+a_2 \in \mcA_0$. In particular,  the element $e=\one+\one^\dag$ now is independent of the choice of $\one^\dag$, and so it is uniquely defined, and $\one^\dag +(\one^\dag)^\dag =e$.

Now let $(\mcA,\mcA_0)$ be a pair satisfying \PropertyN.
 \begin{enumerate} \item  $(\mcA,\mcA_0)$  is $e$-\textbf{idempotent} if   $e+e=e.$

 \item \label{propN-iv} Define left and right actions of $\tT \cup \tT^\circ$ on $\mcA,$ by  defining $a^\circ b := (a b)^\circ$ and $b a^\circ  := (b a)^\circ$ for $a\in \tT,$ $b\in \mcA$.
\end{enumerate}
  \end{definition}

 \begin{example}  \label{tr0}$ $\begin{enumerate}
     \item
$\mcA$ is a semiring and $\mcA_0=\mcI\neq \mcA$
 is an ideal of $\mcA$.
 We could take instead  $\tT = \mcA \setminus \mcI$ to get an \admissible\ pair when $\mcI$ is a prime ideal.

When $\mcA$ is a ring,   $(\mcA,\mcA_0)$ is another way of describing
 $\mcA/ \mcI$.  
 But in general, the additive automorphism given by $b \mapsto b \one^\dag$   need not have order 2. For example, take $\mcA = (\Net,+),$ $\mcA_0 = p\Net,$    and $\one^\dag = p-1.$
 Property N fails, and this example often differs considerably from the situation that we are studying in this paper.

  \item When $\mcA_0 =\zero, $ we call $(\mcA,\zero)$ the \textbf{trivial pair}. In this case Property N yields classical negation, for if $a+a_i = \zero$ for $i=1,2$, then $a_2 = (a_1+a)+a_2 = a_1+(a+a_2) = a_1$; hence $\mcA^\natural$ is then a ring.

        \item  At the other extreme, \cite[Example~2.21]{AGR2}  provides the  pair $(\mcA ,\mcA_0 )$ with $\mcA = \tTz \cup \{e\}$, $\tT$~an arbitrary monoid, and $\mcA_0 = \{ \zero, e\},$
 satisfying $a_1+a_2= e$ for all $a_1,a_2 \in \tT.$ The pair $(\mcA ,\mcA_0 )$   satisfies \PropertyN, and we need to cope with this example when formulating theorems.
\end{enumerate}
  \end{example}

\begin{lemma}\label{lem-tcircmodule}
     If {\PropertyN} holds, then  $\mcA$ is a
     $(\tT \cup \tT^\circ)$-bimodule under the   action given in \ref{propN-iv} of Definition~\ref{propN}.
\end{lemma}
\begin{proof}
Let us first show that  $\mcA$ is a left and right
     $(\tT \cup \tT^\circ)$-module.
    Since $\mcA$ is already a $\tT$-bimodule, we need only to check distributivity with respect to elements of $\tT^\circ$.
 We appeal to \Cref{ep}(ii).
 Using the definition of the action of $\tT^\circ$, and the distributivity over $\tT$, we have, for all $a\in \tT$
 and $b,b'\in\mcA$,
 $$a^\circ (b+b') =(a (b+b'))^\circ=(a b+ a b')^\circ= (a b)^\circ+ (a b')^\circ=  a^\circ b+a^\circ b'.$$
 Next, let us show the bimodule associativity property.
 Given $a_1,a_2\in\tT$ and $b\in\mcA$,
 we have $(a_1 b) a_2=a_1 (b a_2)$
 since  $\mcA$ is a $\tT$-bimodule.
Using
 the definition of the action of $\tT^\circ$,
 we  obtain
 $$(a_1^\circ b) a_2=(a_1 b)^\circ a_2=((a_1 b) a_2)^\circ= (a_1 (b a_2))^\circ= a_1^\circ (b a_2).$$
 Moreover, using the previous equations, we also obtain
 $(a_1^\circ b) a_2^\circ=((a_1^\circ b) a_2)^\circ=(((a_1 b) a_2)^\circ)^\circ$, then, by symmetry and using that $\mcA$ is a $\tT$-bimodule, we obtain $(a_1^\circ b) a_2^\circ=((a_1 (b a_2))^\circ)^\circ= a_1^\circ (b a_2^\circ )$.
 This displays the bimodule associativity property
 with respect to any element of  $(\tT \cup \tT^\circ)$.
\end{proof}




\begin{lemma}\label{modu1}$ $  Suppose that $(\mcA,\mcA_0)$ satisfies \PropertyN.
\begin{enumerate}


\item $e^\dag = e \one^\dag = \one^\dag e= e.$

 \item   In the following, use the action given in~\ref{propN-iv} of Definition~\ref{propN}.
 \begin{enumerate}
     \item   $eb = be=b^\circ$ for all $b\in \mcA.$ (In particular $ee =e^\circ= e+e.$)
 \item $\mcA^\circ$ is a $(\tT\cup \tT^\circ)$-sub-bimodule of $\mcA$.
\item If $e\mcA  \subseteq \mcA_0$, 
then $\mcA^\circ \subseteq \mcA_0.$
 \end{enumerate}
\end{enumerate}
\end{lemma}
\begin{proof}

(i) By definition $e^\dag = e \one^\dag$; $  \one^\dag +(\one^\dag )^2= (\one + \one^\dag )\one^\dag  =  e \one^\dag \in \mcA_0 $, so \PropertyN\ implies this equals $  \one^\dag +\one = e.$


(ii) (a) Using the action given in~\ref{propN-iv} of Definition~\ref{propN},
we have $eb= (\one b )^\circ=b^\circ=be$.

(b)
Using the proof of \Cref{lem-tcircmodule}, we have that $b^\circ+(b')^\circ= (b+b')^\circ,$ $a b^\circ= (ab)^\circ$,  and $a^\circ b^\circ= (a b^\circ)^\circ$,  for all $a\in\tT$ and $b,b'\in \mcA$, which shows that $\mcA^\circ$ is
 a $(\tT\cup \tT^\circ)$-sub-bimodule of $\mcA$.

(iv) By (ii).
\end{proof}

\begin{lemma}\label{propN1} Suppose that $(\mcA,\mcA_0)$ is a mon-pair satisfying
 \PropertyN.
\begin{enumerate}

\item $\tTz^\circ$ is a monoid, denoted $\tTz^\odot,$ under the new operation $\odot$ given by
$a_1^\circ \odot a_2^\circ: = (a_1 a_2)^\circ.$ When $\tT$ is a group, $\tG := \tTz^\odot \setminus \{\zero\}$ is a group with multiplicative unit $e.$

  \item If $a\in \tT$ is invertible then, in $\tG,$
  the inverse of $a^\circ$ is $(a^{-1})^\circ$.

     \item  If $ a\in \tT$ is invertible and $\one^\dag$ is uniquely defined, then $a + b \in \mcA_0$ with $b\in \tT$ implies $b = a\one^\dag$.

\end{enumerate}
\end{lemma}
\begin{proof}
(i) Immediate from Lemma~\ref{ep}(iii).

(ii) $( a^\circ) ( (a ^{-1})^\circ)=  (a  a ^{-1})^\circ = e.$

   (iii)    $a +b \in \mcA_0$ implies $\one +a^{-1} b   = a^{-1} (a + b ) \in \mcA_0, $ so $a^{-1} b = \one^\dag,$ and
 $ b = a\one^\dag.$
\end{proof}

An {nd-semiring pair}  $(\mcA,\mcA_0)$ satisfying weak Property~N
   is $e$-\textbf{distributive} if  $e\sum a_i=\sum a_i^\circ$ in the multiplication of $\mcA.$
(In particular  $e^2= e (\one +  \one^\dag) = e+e$.)

\begin{lemma}\label{dista} If  an {nd-semiring pair}   $(\mcA,\mcA_0)$,
is $e$-distributive, then  the multiplication inside $\tT^\circ$ agrees with \begin{enumerate}
    \item the action given in~\ref{propN-iv} of Definition~\ref{propN}. 
    \item the semiring multiplication on $\mcA^\natural$.
    \end{enumerate}
\end{lemma}
\begin{proof}
(i)
    $a_1^\circ a_2^\circ = a_1^\circ (ea_2) = (a_1^\circ e)a_2 = (a_1 e^2)a_2 = a_1 (e^2) a_2 .$

    (ii) $a^\circ b  = (ae)b = (ab)e = (ab)^\circ.$
\end{proof}

 \subsubsection{Negation maps}$ $

 Here is the analog of a notion from \cite{Row21}.
  \begin{definition}\label{negmap}$ $
\begin{enumerate}
     \item
 A \textbf{negation map} on  $(\mathcal A,\mcA_0)$
is a semigroup automorphism of
$ (\mathcal A,+,\zero) $ of order $\le 2$,  written
$a\mapsto (-)a$, under which $\tT$ is invariant, with $\one +((-)\one)\in \mcA_0,$ which also
 respects the $\tT$-action in the sense that
\begin{equation}
    \label{negd} ((-)a)b = (-)(ab) = a ((-)b), \qquad ((-)b)a = (-)(ba) = b ((-)a),
 \qquad \forall a \in \tT,\ b \in \mathcal A.
\end{equation}

\item
 When $(\mathcal A,\mcA_0)$ is a pair with a negation map $(-)$, we also call $(\mathcal A, \mcA_0, (-))$  a \textbf{triple}. In that case, the pair $(\mathcal A,\mcA_0)$  is \textbf{uniquely negated} if $a+a'\in \mcA_0$ implies $a' = (-)a,$ for $a,a'\in \tT.$ In this case, we also say that $(\mathcal A,\mcA_0)$ has \textbf{unique negation}.
 \end{enumerate}
Note that $(-)\mcA_0 = ((-)\one) \mcA_0 = \mcA_0.$ We write $b(-)b'$ for $b+((-)b')$ for $b,b'\in \mcA$.
\end{definition}
\begin{example} A pair satisfying Property~N, but is not    uniquely negated.
    $\mcA =\{ 0,1,2,3\}$, $\tT=\{1,2\}$, and $\mcA_0 = \{0,3\},$ with addition truncated so that $n_1+n_2 =3$ whenever $n_1+n_2\ge 3$ with the usual addition. Then $2+2 = 2+1 = 3$. This sort of example is elaborated in \Cref{pairex}, after we bring in more terminology.
\end{example}

\begin{lemma} \label{3.15} $ $
\begin{enumerate}
 \item   If $(\mathcal A,\mathcal A_0)$ has a  negation map $(-)$, then  $(\mathcal A,\mathcal A_0)$ has weak Property~N, with $ \one^\dag =(-)\one.$  If $(\mathcal A,\mathcal A_0)$ is uniquely  negated, then  $(\mathcal A,\mathcal A_0)$ has  Property~N (and $1+a = e$ implies $a = \one^\dag$).
 \item If  $(\mathcal A,\mathcal A_0)$ satisfies weak Property~N, then we can obtain a negation map by defining $(-)b = b^\dag =b \one^\dag=\one^\dag b$ for $b\in \mcA$, as soon as the   following properties hold:
 \begin{enumerate}
     \item $(\one^\dag)^2 = \one;$
     \item $(a\one^\dag) b= a (\one^\dag b)$ and  $b(\one^\dag a) = (b \one^\dag )a$, for all $a\in \tT$ and $b\in \mcA$
     (which holds when $\mcA$ is a mon-pair).
      \item $\mcA^\circ \subseteq \mcA_0$ (which holds when $\mcA$ is admissible).
 \end{enumerate}

  \item If  $(\mathcal A,\mathcal A_0)$ has weak Property~N and $\one$ has a unique quasi-negative $\one^\dag $,   then   for any invertible $a\in \tT$, if $a+a' \in \mcA_0$ for $a'\in \tT,$ then $a' =a^\dag.$
 \item If   $(\mathcal A,\mathcal A_0)$ is  a gp-pair with weak Property~N and $\one^\dag $ has a unique quasi-negative, then
 $(\mathcal A,\mathcal A_0)$ is uniquely negated.
\end{enumerate}\end{lemma}
\begin{proof} (i) Clear.

(ii) Set $(-)b := \one^\dag b= b\one^\dag$ for $b\in \mcA$.
  Then
      $ (-)((-)b) = \one^\dag(\one^\dag b ) = \one b = b,$ and
  $ b(-)b = b+b\one^\dag = b^\circ \in  \mcA^\circ \subseteq \mcA_0.$

Also for $a\in \tT$ and $b\in \mcA$, we have
$ (-a)b=(\one^\dag a) b=\one^\dag (a b)=(-) (ab)$ by assumption.
We also have $ (\one^\dag a) b= (a\one^\dag) b= a (\one^\dag b)=a ((-) b)$. The symmetric equations also hold, so
$(-)$ is a negation map.
 (iii) If $a+a' \in \mcA_0$ then   $a' = a \one^\dag = a^\dag$, by Lemma~\ref{propN1}(iii).

(iv)
$(\one^\dag)^2 +\one^\dag = \one^\dag(\one^\dag +\one) =e = \one +\one^\dag,$ implying  $(\one^\dag)^2 =(\one^\dag)^\dag =\one,$ so apply (ii).
\end{proof}

\begin{rem}
    \label{fk} If $(\mcA,\mcA_0)$ is of the first kind,
then the identity  is a negation map.
\end{rem}

 A negation map is  called a ``symmetry'' in
\cite[Definition~2.3]{AGG2}.
  Although negation maps play the key role in
\cite{AGG2,Row21,AGR2} we shall see in this paper that \PropertyN, sometimes in conjunction with $e$-idempotence, suffices to generalize much of the theory.

\begin{lemma}$ $ \begin{enumerate}
  \item If $(\mcA,\mcA_0)$ is a   pair of the first kind, then
  $a+a = a( \one + \one) \in \mcA_0$ for all $a\in \tT$.
  In particular,  $a +a \ne a.$

  \item   A    pair  $(\mcA,\mcA_0)$
is   of the second kind, if and only  if   $\one + \one \notin \mcA_0.$     \end{enumerate}
\end{lemma}
\begin{proof}    (i) $a+a = a( \one + \one) \in \mcA_0$ for all $a\in \tT$.

    In particular, $a + a \notin \tT,$ so  $a + a \ne a.$

(ii) $  (\Rightarrow)  $ Immediate.

$  (\Leftarrow)  $
    $a + a = a(\one + \one)\notin \mcA_0$ for all $a\in \tT.$ by~Assumption~\ref{Note1}(iii).
\end{proof}

\subsubsection{The $\mcA_0$-characteristic of a pair}$ $

Since $\mcA_0$ takes the role of $\zero$, we modify the characteristic. We consider the first kind, since we want $\mathbf{2} \in \mcA_0$.

\begin{definition}
    The $\mcA_0$-\textbf{characteristic} of a pair $(\mcA,\mcA_0)$ of the first kind is the smallest $k>0$ such that $\mathbf{2k+1} \in\mcA_0.$ (If there is no such $k$ then $(\mcA,\mcA_0)$ has $\mcA_0$-\textbf{characteristic} $0.$
\end{definition}

\begin{lemma}\label{A0char}
    If $(\mcA,\mcA_0)$ has $\mcA_0$-{characteristic} $k>0$ then $\mathbf{m} \in\mcA_0$ for all $m\ge 2k.$
\end{lemma}
\begin{proof}
    An obvious induction, since $\mathbf{2} \in \mcA_0.$
\end{proof}

\subsection{Balance relations and surpassing relations in pairs}$ $

 In the absence of a negation map, we need more structure in order to be able to build a theory.

\subsubsection{The balance relation}$ $

Lacking negation, we need to weaken equality.
One key relation in \cite{AGG2,AGR2,Row21} is the \textbf{balance relation}, which is presented
axiomatically here.

\begin{definition}\label{balr}$ $\begin{enumerate}
    \item
    A \textbf{balance relation}  $\nabla $  (different from the use of~$\nabla $ in~\cite{IzhakianRowen2008Matrices2}) is a symmetric and reflexive (but not necessarily transitive) relation
  also satisfying the properties:
\begin{enumerate}
    \item $\nabla $ is preserved under addition.
    \item $\nabla $ is preserved under multiplication by elements of $\tT$ (i.e., $b_1 \nabla b_2$ implies $ab_1 \nabla ab_2$ for $a\in \tT$).   \item $b \nabla \zero$ for all $b\in \mcA_0.$
\end{enumerate}

 \item A balance relation is $\tT$-\textbf{cancellative} if $ab_1 \nabla ab_2$ implies $b_1 \nabla b_2$, for all $a\in \tT.$

\item  An element $b\in \mcA$ \textbf{balances} a set $S = \{b_i: i \in I\},$ written $b\nabla S,$ if $b \nabla b_i$ for all $  i\in I. $
   \end{enumerate}

\end{definition}

\begin{example}\label{bal1}$ $
    \begin{enumerate}
         \item Equality is the \textbf{trivial} balance relation.

        \item  When $(\mathcal A, \mcA_0)$ satisfies weak Property~N, fix $\one^\dag$ and
    write   $b_1\, \nabladag \, b_2$, if
   $b_1 + b_2^\dag\in  \mcA_0$  and $b_2 + b_1^\dag\in  \mcA_0$,   which   is a balance relation because $\mcA_0$ is a sub-bimodule of $\mcA$.

 \item  When $(\mathcal A, \mcA_0)$ has a  negation map, (ii) becomes ``$b_1\, \nabladag \, b_2$ if
   $b_1 (-) b_2\in  \mcA_0$.''  In this case we write $\nablaminus$ for $\, \nabladag $.  \footnote{This holds in particular for $(\mathcal A, \mcA_0)$ of the first kind, cf.~\Cref{fk}. For pairs of the second kind, we use the more subtle definition of \Cref{bal1}(iv),  to compensate for the lack of a negation map.}

   \item  When $(\mathcal A, \mcA_0)$ is a pair, write $b_1\nablaT b_2$, $b_i\in \mcA$,
     if 
   we can write $$b_i = b_{i,0} +\sum _{j=1}^t a_{i,j},\qquad b_{i,0}\in \mcA_0,\ a_{i,j}\in \tT,\ i = 1,2,\ t\geq 0,$$ such that, for each $j\geq 1,$      there is $a_j\in \tT$  such that $a_{1,j} +a_j \in  \mcA_0$ and $a_{2,j} +a_j \in  \mcA_0$.

     If $(\mcA,\mcA_0)$ is a mon-pair satisfying weak property~N,     and $\mcA = \mcA_0 +\mcA^\natural$
     (in particular if $\mcA$ is admissible), then $\nablaT$ is a 
     balance relation. 
     Indeed,  $\nablaT$ is clearly symmetric, it is preserved under addition and multiplication because $\tT$ is a monoid and $\mcA_0$ is a semimodule. Moreover, $\nablaT$ is reflexive since writing $b = b_0 + \sum a_i$ for $b_0\in \mcA_0$ and $a_i\in \tT,$  we have $a_i+a_i^\dag\in \mcA_0$, hence $b\nablaT b$.
       \end{enumerate}
\end{example}

 \begin{rem}\label{nablagood} $ $
    \begin{enumerate}
        \item  $a\, \, \nabladag \,\, (b+c)$ if and only if $ (a+c^\dag) \, \, \nabladag \, \, b.$
  (Both sides are equivalent to $a+b^\dag + c^\dag \in \mcA_0.$)

     \item  The balance relations of \Cref{bal1}(i), (ii), and (iii) are cancellative in view of Assumption~\ref{Note1}. On the other hand, (iv) is cancellative when $\tT$ is a group.

       \item  The balance relations of \Cref{bal1}(i), (ii), and (iii) satisfy $b\nabla \zero$ iff $b\in \mcA_0$.

       \item      If $\mcA_0 = \{\zero\}$, then each of \Cref{bal1} is equality. Indeed, for $\nablaT$, if $a_{1,j} + a_j = \zero = a_{2,j}+a_j,$ then $a_{2,j} = (a_{1,j}+a_j)+a_{2,j} = a_{1,j} +(a_j+a_{2,j}) = a_{1,j}.$
    \end{enumerate}
\end{rem}

\subsubsection{Surpassing relations}$ $

 We  modify slightly the    {surpassing relation} of
 \cite[Definition~1.31]{Row21} and
\cite[Definition~2.4]{JMR1}.


\begin{definition}\label{precedeq07}$ $
 \begin{enumerate}  \item
 A \textbf{pre-surpassing relation} on a pair $(\mathcal A,\mcA_0)$, denoted
  $\preceq$, is a $\tT$-module pre-order satisfying    $\mcA_0 \subseteq \mathcal A _{\operatorname{Null}},$
   where $A _{\operatorname{Null}} : = \{ c\in \mcA: \zero \preceq c\}. $

 \item    A \textbf{surpassing relation}  is a {pre-surpassing relation} satisfying  the property:
 \begin{itemize}

\item  $a_1 \preceq a_2 $ for $a_1 ,a_2 \in   \tTz$
    implies $a_1 =a_2.$
\end{itemize}


 \item A $\preceq$-\textbf{morphism} $f:(\mcA,\mcA_0)\to (\mcA',\mcA'_0)$ is a multiplicative map $f:\mcA\to \mcA'$ satisfying $$f\left(\sum_{i=1}^n b_i\right) \preceq \sum _{i=1}^n f(b_i)$$    for each $n$, $b_i\in \mcA$.
   \end{enumerate}
\end{definition}

\begin{example}\label{Su1}$ $
    \begin{enumerate}
     \item  For any sub-$\tT$-module $\mathcal I$ of $\mathcal A$
containing $\mcA_0$, we write $a
\preceq_{\mathcal I}
 c$   if $a+b = c$ for some $b \in \mathcal I$.

The relation $(\preceq _{ \mathcal I} )$ is a pre-surpassing relation, and is a surpassing relation if it also
satisfies
\begin{itemize}\label{int18}
\item $(a + \mathcal I) \cap \tT = \{a\}, \
\forall a \in \tT.$
\end{itemize}
\noindent (This needed for $\preceq _{ \mathcal I} $ to satisfy \Dref{precedeq07}(ii).)
 In particular, taking $\mcI = \mcA_0,$     we denote   $\preceqzero$ for the pre-surpassing relation  $\preceq_{\mcA_0}$.

     \item     For a classical example, using Zorn's lemma, one could take $\mcA_0$ to be an ideal of a semiring $\mcA$ maximal with respect to being disjoint from $\tT, $  and we could take $\preceq$ to be equality.
 \end{enumerate}
\end{example}

\begin{INote}
 The point of \Cref{precedeq07} is to strengthen balancing when necessary, yielding  many  theorems generalizing classical algebra,
 noting that   surpassing restricts to equality on $\tT$.
 \end{INote}.

\begin{lemma}\label{impl}  $b_1\preceqzero b_2$ implies $b_1 \nabla b_2$ for any balancing relation $\nabla$.

\end{lemma}\label{symba}
\begin{proof}  Suppose $b_2 = b_1 +c$ where $c\in \mcA_0$. $b_1\ \nabla \ b_1$
by definition. Moreover  $c\nabla \zero$ by definition.
 $\nabla$ is additive, so we get the result by adding these two relations.
\end{proof}

\begin{lemma} In a semiring pair,
    If  $b_i \preceq_{\mcI} b_i'$ for $i =1,2,$ then $b_1b_2 \preceq_{\mcI} b_1'b_2'$.
\end{lemma}
\begin{proof}
    Write $b_i' = b_i +c_i$ for $c_i \in I$. Then $b_1'b_2' = b_1b_2 + (b_1 c_2 +c_2 b_1),$ and $b_1 c_2 +c_2 b_1\in \mcI.$
\end{proof}


\begin{lemma}\label{nae} If $(\mathcal A ,\mathcal A_0)$ is uniquely negated, then the pre-surpassing relation $\preceq_0$ is  a surpassing relation.
\end{lemma}
\begin{proof} If $a_1 + b = a_2$ for $a_i\in \tT$ and $b\in \mcA_0,$ then $a_2(-)a_1 =a_1 (-) a_1 +b \in \mcA_0$, implying $a_2=a_1.$
\end{proof}

\begin{lemma}
 Any $\preceq$-{morphism} $f:(\mathcal A ,\mathcal A_0)\to (\mathcal A ,\mathcal A_0)$ is a   weak morphism.
\end{lemma}
\begin{proof}
    If $\sum  b_i \in \mcA_0$ then $\sum  b_i \succeq \zero,$
    so $\sum f(b_i) \succeq f(\sum b_i) \succeq \zero.$
\end{proof}

\subsection{Metatangible pairs (Statements of \Cref{theoalphaA}, \Cref{theoalphaD}, and \Cref{theoalphaB})}$ $

Recall   that \cite[Definition~2.25]{Row21}  called a triple
 {\it metatangible} if $a_1+a_2 \in \tT $ for any $a_1,a_2 $
in~$\tT$ with $a_2 \ne (-) a_1$.
Metatangible triples are the mainstay of \cite{Row21}.

  Surprisingly, we retain many  results about metatangible from  \cite{Row21} when we weaken triples to pairs, replacing the negation map by \PropertyN.

\begin{definition}\label{metat}$ $
 \begin{enumerate}\item  $(\mcA,\mcA_0)$ is \textbf{weakly metatangible}
  if   $a_1+a_2 \in \tT \cup \mcA_0$ for any $a_1,a_2 $
in~$\tT.$

   \item  $(\mcA,\mcA_0)$ is  \textbf{metatangible} if it is  \admissible, weakly metatangible, and satisfies \PropertyN.
\end{enumerate}\end{definition}

Any metatangible pair is weakly metatangible, but the converse might fail. Metatangible pairs lie at the heart of this paper, in view of the following theorem.
\begin{theoalpha}[\protect{Generalizing \cite[Theorems 7.28, 7.32]{Row21}, which was proved for triples}]
\label{theoalphaA}For any metatangible pair $(\mathcal A, \mcA_0)$, every element $c$ of $ \mathcal A$
has a ``uniform presentation'', by which we mean there exists
$c_\tT \in \tT$ such that for the height $m_c$ of $c$, we have $ c_\tT \in \tT$,  with
either: \begin{enumerate} \item  The pair $(\mathcal A, \mcA_0)$ is  of the first kind, with  $c = m_c c_\tT,$ or
  \item  The pair $(\mathcal A, \mcA_0)$ is  of the second kind, and either
\begin{enumerate} \item $c =\zero$, and $m_c=0.$  \item   $c = c_\tT \in \tT$ and $m_c=1$, or
  \item   $c = c_\tT^\circ$ and $m_c=2$.
\end{enumerate}
 \end{enumerate}
In (i),  for some $m_0 \ge 0,$  $\mcA_0 = \{ m_c c_\tT : m_c \ge m_0 \text{ or $m$ is even}\}.$

In (ii), $\mcA = \{\zero\}\cup \tT\cup \tT^\circ$ and $\mcA_0 = \tTz^\circ$.
  \end{theoalpha}
The proof of \Cref{theoalphaA} is given in \Cref{up}.

\begin{lemma}\label{tanplu2}   $ $
 $\tTz +\mcA_0 = \mcA$ in any  metatangible pair $(\mcA,\mcA_0)$.
\end{lemma}
\begin{proof}  Take $b = \sum _{i=1}^t a_i \in \mcA,$ for $a_i \in \tT$. By induction
  on $t$,  $\sum _{i=1}^{t-1} a_i = a +b'$ for $a\in \tT,$ $b'\in \mcA_0,$ and thus $b = b' +(a+a_t),$ where $a+a_t\in \tT \cup \mcA_0.$
\end{proof}

Moreover, Theorem A  implies at once that  $\mcA = \tT \cup \mcA_0$ for any  metatangible pair of the second kind.
  \begin{example}  $\mcA = (\Net,+)$ and $\tT = \{  1\}.$ Assume that $(\mcA,\mcA_0)$ is a  pair is of the first kind.  Then $e= 2    \in \mcA_0.$
  \begin{enumerate}
  \item $\mcA_0 = \{ n : n \ne 1 \}.$ Then $\mcA = \tT \cup \mcA_0$, but $\mcA_0 \ne \mcA^\circ = 2 \Net  \ne \{0,2\} =\tTz^\circ. $

      \item   $\mcA_0 = 2\Net,$  so
 $\eplus  =\one + e  =\mathbf{3}\notin \tT \cup \mcA_0$.
  \end{enumerate}

  \end{example}

\begin{lemma}\label{ZSF} Suppose that $(\mcA,\mcA_0)$ is a weakly metatangible pair.
\begin{enumerate}
      \item   If $(\mcA,\mcA_0)$  satisfies weak Property~N, then for any $a_1,a_2\in \tT$ there is $a\in \tTz$ with $a_1 + a_2 + a \in \mcA_0.$

   \item   If $\one$  fails to have a quasi-negative in a gp-pair $(\mcA,\mcA_0)$, then  $\mcA_0 = \{0\}$ and $\mcA=\tTz,$ which
is~ZSF.
\end{enumerate}\end{lemma}

\begin{proof}
(i) If $a_1 + a_2  \in \mcA_0,$ then take $a_3 = \zero$.   If $a_1 + a_2  \in \tT,$ the assertion is by weak Property~N.

 (ii) We claim that $a_1+a_2\in \tT$ for all $a_1,a_2\in \tT.$ Otherwise take $a_1+a_2\in \mcA_0.$ Then $\one + a_1^{-1}a_2\in \mcA_0,$ contrary to hypothesis.   By induction on height, every nonzero element of $\mcA$ is then in $\tT.$ But any sum of tangible elements must then be tangible, and thus is not $\zero.$
\end{proof}

 So a weakly metatangible gp-pair  failing \PropertyN\ is a very special case, whose linear algebraic theory will be classical (and thus known). Conclusion (i) of the lemma, when applied to fuzzy rings, is called the ``quasi-field condition'' in   ~\cite{GJL}.

 \begin{lemma}\label{bip9}
For any  metatangible pair $(\mathcal A, \mcA_0)$ of the second kind,
$\tTz^\odot$   of  Lemma~\ref{propN1} is
a  semiring.
 \end{lemma}
\begin{proof}  If $a_1^\circ \ne a_2^\circ$ then $a_1+a_2\in \tT$  (since otherwise $a_1+a_2\in \mcA_0$ and so $a_1+a_2=a_1^\circ=a_2^\circ$ because $(\mcA,\mcA_0)$ satisfies property N).
Moreover by \Cref{ep}, $ (a_1+a_2) ^\circ= a_1^\circ+
a_2^\circ $, so $ a_1^\circ+ a_2^\circ \in\tTz^\circ $

If $ a_1^\circ = a_2^\circ$ then  $ a_1^\circ + a_2^\circ =  a_1^\circ + a_1^\circ = {(a_1+a_1)^\circ},$ noting that $a_1+a_1 \in \tT$ since $(\mathcal A, \mcA_0)$ is of the second kind. 

Associativity is seen via
$ a_3^\circ(a_1^\circ + a_2^\circ ) = (a_3 (a_1  + a_2))^\circ =  (a_3 a_1  + a_3a_2)^\circ =  (a_3 a_1)^\circ  +(a_3  a_2)^\circ.  $
\end{proof}

\begin{theoalpha} \label{theoalphaD}
 Suppose that  $(\mathcal A, \mcA_0,(-))$ is a metatangible pair of the second kind, and  with a  negation map  $(-)$. If
 $b_1 \, \nablaminus \, b_2 $  then $b_1\nablaT b_2$.    Conversely, if $b_1\nablaT b_2$ and $(\mathcal A, \mcA_0,(-))$ is uniquely negated, then $b_1 \, \nablaminus \, b_2 $.
\end{theoalpha}

The proof of \Cref{theoalphaD} is given in \Cref{balel}.

 \subsubsection{$\mcA_0$-bipotent pairs} $ $

 The next   property is stronger than metatangibility.

  \begin{definition}\label{bip3}
 An \textbf{$\mcA_0$-bipotent} pair  is a metatangible pair $(\mcA,\mcA_0)$,
 with $a_1+a_2 \in \{a_1,a_2\} \cup \mcA_0$ for all $a_1,a_2$
in~$\tT.$
  \end{definition}

  \begin{lemma}\label{bipid0}
  Any $\mcA_0$-bipotent pair $(\mcA,\mcA_0)$ of the second kind is   (additively) idempotent.
  \end{lemma}
  \begin{proof} We need to show that $b+b =b,$ for $b\in \mcA.$ Writing $b = \sum a_i$ for $a_i\in \tT,$ we may assume that $b\in \tT.$ Then
 $b+b \notin \mcA_0,$ so  $b+b =b.$
  \end{proof}

  \subsubsection{Moduli}\label{modtrp0}$ $

The next concept, inspired by \cite{AGG2}, is a way of partially ordering elements of a pair.

 \begin{definition}\label{mod1}$ $
 \begin{enumerate}

  \item A \textbf{homomorphism} from a weakly \admissible\  $\tT$-module  $\mathcal A$ to a weakly \admissible\ $\tT '$-module $\mcA'$ is an additive homomorphism
 $\mu : \mathcal A \to \mathcal A' $ such that $\mu(\one_\tT) = \one_{\tT'}$, $\mu(\tT) \subset \tT'$,
 $\mu(\zero_\mcA) = \zero_\mcA'$, and $\mu (ab) = \mu(a)\mu( b)$ for all $a\in \tT,$ $b\in \mcA.$


  \item A \textbf{weak modulus} from a pair $\mathcal A$  to an idempotent $\tT '$-module $\mcM$ is a nonzero  homomorphism
 $\mu : \mathcal A \to \mcM $ satisfying the property
     $b_1 + b_2 = b_1$ whenever  $\mu(b_1)\ne \mu(b_2)$ with $\mu(b_1)+\mu(b_2)= \mu(b_1).$

     \item A weak modulus is a \textbf{modulus} when $\mcM$ is  bipotent, and thus ordered via Remark~\ref{Gre}(i), and satisfying
     $$\mu (a_i)\le \mu(a_i')\;,  \quad i = 1,2 \;\Rightarrow \mu(a_1a_2)\le \mu (a_1'a_2'),\qquad \forall a_i, a'_i\in \tT.$$
  The modulus $\mu$ is a $\tT$-\textbf{modulus} when $\mu(\tT)= \mu(\mcA\setminus\{\zero\}).$
 \end{enumerate}

   \end{definition}

     This   definition resembles \cite[Proposition~2.10]{AGG2} (which takes $\nabla$ to be trivial),  in which  $\mu$ is onto (although the latter condition can be removed by restricting $\mcM$ to the image of $\mu$).

\begin{theoalpha}\label{theoalphaB}
   Suppose that  $(\mcA,\mcA_0)$ is a   metatangible mon-pair.

\begin{enumerate}
   \item
Viewing $(\tTz^\odot,\odot)$ as semiring via Lemma~\ref{bip9},
  define $\mu(a) = a^\circ$ for all $a\in \tT.$ Then
 $\mu: \tTz \to \tTz^\odot$ is a monoid homomorphism.

 \item Suppose that  $(\mcA,\mcA_0)$ is  $\mcA_0$-bipotent.  \begin{enumerate}
     \item $ \tTz^\odot$ becomes an idempotent semiring under the multiplication of (i) and the new addition~$\dplus$ given by
 $$a_1^\circ \dplus a_2^\circ =\begin{cases}
 (a_1+a_2) ^\circ= a_1^\circ+a_2^\circ \quad{ \text if }\quad a_1^\circ \ne a_2^\circ; \\ {a_1}^\circ \quad \text{ if }\quad   a_1^\circ = a_2^\circ.
\end{cases}$$

   \item There is a $\tT$-modulus $\mu: \mcA \to \tTz^\odot$ given by $\mu (c) = c_\tT^\circ.$

 \item When $(\mcA,\mcA_0)$ is of the second kind, $\dplus$ is the same addition as the given addition in $\mcA_0,$
so the map $\mu:\mcA\to\mcA_0$ given by $\mu(b)= b^\circ$ is a $\tT$-modulus.
 \end{enumerate}
    \item We have the following results with respect to with the original operations, when $(\mcA,\mcA_0)$ is $\mcA_0$-bipotent of the first kind.
  \begin{enumerate}
     \item  When $(\mcA,\mcA_0)$ has \period\ $m$,   $m\tTz$ is a bipotent sub-semiring of $\mcA$, and there is a $\tT$-modulus $\mu:\mcA \to  m\tTz ,$ given by $\mu (ka) = ma$  for all $k\in \Net.$

  \item  When $\mcA$ has characteristic $(1,2)$,   $\mcA$ has  the  $\tT$-modulus $\mu: \mcA \to  \mcA ^\circ \subseteq \mcA_0$ given by  $\mu (b) =  b_\tT^\circ: b\in \mcA.  $
  In other words,  $\mu (ka) = a+a= a^\circ$ for all $a\in \tT,$ $1<k\in \Net.$ (This corresponds to the ``ghost map'' in supertropical algebra \cite{IR}).

    \end{enumerate}
\end{enumerate}
\end{theoalpha}

The proof of \Cref{theoalphaB} is given in \Cref{modtrp}. When $b \mapsto b^\circ$  defines a (weak)  modulus,
we call it  the \textbf{standard (weak) modulus} on $\mcA$.

\begin{rem} \label{nont}$ $\begin{enumerate}
    \item Any semigroup $\tG$ can be viewed as
 a pre-ordered $\tT$-module  $(\tG, \leq)$ via Green's  identification of \Cref{Gre}.

  \item  If one strengthens the definition to the    property that for $b\in \mcA,$ $\mu (b) = \zero$ iff $b = \zero,$ then $\mu$ is called a {\it valuation} in the literature.

    \item    If  $\mathcal A$ has a modulus $\mu$, and $a$ is a root of $\one$, then $\mu(a) = \mu(\one).$

      \item  Any ZSF semiring without zero divisors has the  \textbf{trivial modulus}  $\mu:\mcA \to \mathbf B = \{ \zero, \one\}$ given by $\mu(\zero) = \zero $ and $\mu(b) = \one $ for all $b\ne \zero.$
\end{enumerate}
\end{rem}
\section{Proofs of \Cref{theoalphaA}, \Cref{theoalphaD}, and \Cref{theoalphaB}}\label{modtrp3}

Let us proceed with details of the proofs of the first three main theorems.
\subsection{Structure theory of   metatangible pairs and $\mcA_0$-bipotent  pairs}\label{basicnot}$
$

In this subsection we recast results from \cite[\S 6]{Row21} in the more general context of pairs.
We start with our key result for pairs of the second kind.

\begin{lemma}\label{m11}  Suppose that $(\mcA,\mcA_0)$  is a   metatangible pair of the second kind.
\begin{enumerate}
    \item  Either $\eplus = e =e +\one^\dag$, or $\eplus\in \tT$.
          \item   $(\mcA,\mcA_0)$  is $e$-idempotent.
          \item  $(ae)e = ae$.
\end{enumerate}
        \end{lemma}
 \begin{proof} By hypothesis, $a: = \one  +\one \in \tT,$ and $\one ^\dag +\one ^\dag\in \tT.$ Also, $e+e\in \mcA_0$.

 (i)    $\eplus =e+\one  = \one ^\dag +a \in \mcA_0 \cup \tT.$
Assume that $\eplus\notin \tT.$ Then $a+\one^\dag = \eplus \in \mcA_0$, implying by \PropertyN\ that
$a+\one^\dag = (\one^\dag )^\circ = \one +\one^\dag =e ,$
and  hence $\eplus =a+\one^\dag  = e. $

(ii) Continuing the proof of (i), if $\eplus\notin \tT$ then $ \one +(\one^\dag+\one^\dag) =   e  +\one^\dag  = (a +\one^\dag)+\one^\dag = e+e.$ Hence by \PropertyN, $e+e = \one^\circ = e.$

Thus, by (i), we may assume that  $\eplus \in \tT.$ But then
$\eplus+\one ^\dag  =e+e\in \mcA_0,$ so $e+e = \one^\circ =e.$


(iii) $(ae)e = (ae)\one^\circ =  ae +ae \one^\dag = ae+ae = a(e+e) = ae.$
 \end{proof}

Let us elaborate.

\begin{lemma}\label{ws1}
    If $(\mcA,\mcA_0)$ satisfies \PropertyN\ and $a_1 \preceqzero a_2$ for $a_i \in \tT$, then $a_1 e= a_2 e.$
\end{lemma}
\begin{proof}
    Write $a_2 = a_1+b$ for $b\in \mcA_0.$  Then $a_1^\dag +a_2  = a_1e +b \in \mcA_0, $ so $a_2 e = a_1^\dag e = a_1e$.
\end{proof}

  \begin{lemma}\label{m1}  Suppose that $(\mcA,\mcA_0)$  is a   metatangible pair. 
  Let $a =a_1 +a_2 ^\circ$, where $a_i\in\tT$. One of the following hold:
  \begin{enumerate}
     \item  $a  \in \tT$, with $a^\circ = a_1^\circ.$
       \item $a  = a_2 ^\circ,$ with $a_1+a_2= a_2$.
     \item $a_1 ^\circ =   a_2 ^\circ,$ with $a  = a_2 e^+$.
       \end{enumerate}
  \end{lemma}
  \begin{proof}
If $a \in \tT,$ then $a +a_1^\dag\in \mcA_0,$ implying $a^\circ =  a_1^\circ$ by \PropertyN.
So assume that $a $ is not tangible. Then  we have one of the following, rewriting $a= (a_1+a_2)+a_2^\dag $:
\begin{itemize}
\item $a_1+a_2 \in \tT$, so $a\in \mcA_0$ and
  $a = (a_1+a_2) + a_2^\dag =(a_2^\dag)^\circ = a_2^\circ $.
    \item $a_1+a_2 = a_1 ^\circ =   a_2 ^\circ. $
    But then  $a = a_2^\dag +a_2^\circ  = a_2(\one+e)$.
\end{itemize}
  \end{proof}

 \begin{lemma}\label{bip1}  Suppose that $(\mcA,\mcA_0)$  is a   metatangible pair.  Let $a =a_ 1+a_2+a_3$, with  $a_i\in \tT$. One of the following holds:
 \begin{enumerate} \item $a $ is tangible, \item $a =a_1^\circ = a_2^\circ = a_3^\circ ,$ \item $a = e^+a_1 = e^+a_2 = e^+a_3 .$ \end{enumerate}
\end{lemma}
\begin{proof} We use repeatedly the fact that if $a_i + a_j \in \mcA_0 $ then $a_i + a_j = a_i^\circ = a_j^\circ .$

 First assume that $a_1+a_2\in \mcA_0.$
Then $a = a_1^\circ + a_3 = a_2^\circ + a_3$, so we are done by  \lemref{m1}.

So we may assume that $a_i+a_j\in \tT $ for all $i\ne j.$
If (i) fails then  $a_1+a_2+a_3 = (a_1+a_2)+a_3= a_3^\circ.$ By symmetry of indices, we have (ii).
\end{proof}

\begin{lemma} \label{theoalphaA0} Suppose $(\mathcal A, \mcA_0)$ is a metatangible pair, with $a_1,a_2\in \tT$.\begin{enumerate}
    \item  If $(\mathcal A, \mcA_0)$ is  of the first kind,
then for $m_1, m_2 \ge 1,$ $$m_1 a_1 + m_2 a_2 = \begin{cases}
m_3a_3, \text{ when } a_3:=a_1+a_2  \in \tT,\text{ with } m_3 \le \min\{m_1,m_2\}  ,\\ ( m_1 + m_2) a_1 = ( m_1 + m_2)a_2, \text{ when }  a_1+a_2\in \mcA_0.
\end{cases}$$

  \item
If $(\mathcal A, \mcA_0)$ is  of the second kind then $$a_1^\circ +a_2^\circ = \begin{cases} a_3^\circ, \text{ when }  a_3= a_1+a_2\in \tT,
\\  a_1^\circ, \text{ when }  a_1+a_2= a_1^\circ = a_2^\circ.
         \end{cases}$$
\end{enumerate}
\end{lemma}
\begin{proof}
 (i) Assume that $m_1\le m_2.$ Let $a = m_1 a_1 + m_2 a_2 = m_1(a_1+a_2) + (m_2-m_1)a_2 .$
 If $a_3 = a_1+a_2\in \tT$ then $a= m_a a_3 + (m_2-m_1)a_2,$ so  we are done by induction on $m_2.$ Otherwise $a_1 + a_2 = a_1^\circ = a_1 +a_1 = a_2 + a_2,$ so we can replace $a_2$ by $a_1,$ and visa versa.

 (ii)  $(\mathcal A, \mcA_0)$ is $e$-idempotent by \lemref{m11}(ii), so we apply \lemref{m1}(ii).
\end{proof}

\subsection{The uniform presentation (Proof of \Cref{theoalphaA})}\label{up}$ $

The uniform presentation of \cite[Theorem 6.25]{Row21} carries over for metatangible pairs.

\begin{proof}[Proof of \Cref{theoalphaA}]
 For  $(\mathcal A, \mcA_0)$   of the first kind, write $c = \sum _{i=1}^{m_c} a_i,$ for $ a_i,\in \tT$. The proof is by induction on $m_c.$ If $m_c = 1$, the assertion is obvious. For $m_c \ge 2$ we have $c = (a_1+a_2) + b$ where $b= \sum _{i=3}^{m_c} a_i.$
 If $a_1+a_2\in \tT$ then $c$ has smaller height, a contradiction, so
  $a_1+a_2\in \mcA_0,$ i.e., $a_1+a_2= a_1^\circ .$ Now apply induction to $b,$ and conclude by adding it to $a_1^\circ$ using  \lemref{theoalphaA0}.


 For  $(\mathcal A, \mcA_0)$  of the second kind, we  induct on the height, noting that we are done by Lemma~\ref{bip1} unless  $m_c \ge 3$ and $a_1+ a_2+a_3 = e^+a_1$; then $a_1+a_1\in \tT$ enables us to reduce the height. The last assertion is clear.
 \end{proof}

In contrast to \cite[Theorems 7.28, 7.32]{Row21}, the element  $c_\tT$ need not be uniquely defined, although we do have the next result.

 \begin{lemma}\label{bip11}$ $
 \begin{enumerate}
     \item  Suppose that  $(\mathcal A, \mcA_0)$  is a   $\mcA_0$-bipotent  pair of the first kind, and $c\ne \zero$. Either
     \begin{enumerate}
         \item  $c_\tT^\circ $ of \Cref{theoalphaA} is   uniquely defined, or         \item  $ \mathcal A $  has some \period\ $m>0$ and
$mc_\tT$ is   uniquely defined, and  $m\tT$ is an idempotent semiring.
     \end{enumerate}

  \item For any  metatangible  pair $(\mathcal A, \mcA_0)$ of the second kind,  $c_\tT^\circ $ is   uniquely defined, and $c_\tT^\circ +c_\tT = c_\tT^\circ .$ If   $c_\tT^\circ \nabladag a^\circ$ for $a\in \tT,$ then  $c_\tT^\circ = a^\circ$
 \end{enumerate}
 \end{lemma}
\begin{proof} (i) Suppose   that  $c_\tT^\circ $ is not  uniquely defined. Then $c = mc_\tT = m'c'_\tT  $, with $c_\tT^\circ \ne {c'_\tT}^\circ ,$ and thus $c_\tT + c'_\tT \notin \mcA_0,$
so we may assume that  $c_\tT + c'_\tT =c_\tT .$ Then by iteration $c + c'_\tT =  mc_\tT+ c'_\tT = c,$ and then $2m c_\tT  =  m c_\tT+ m'c'_\tT =
 c =m c_\tT ,$ implying $2m\one = m\one,$ so $(\mathcal A, \mcA_0)$  has  \period\ $m.$ But then $m' c'_\tT = 2m' c'_\tT = c+c= 2m c_\tT =m c_\tT , $ and for any $a\in \tT,$ $ma+ma = 2ma = (2m)\one a = m\one a =ma.$

 (ii) The first assertion is obvious by definition, applied to \Tref{theoalphaA}(ii)(c). Furthermore, $c_\tT^\dag+c_\tT^\dag \in \tT,$ so $c +c_\tT^\dag = c+ (c^\dag+c^\dag) \in \mcA_0$ implies $c +c_\tT^\dag =
 c_\tT^\circ$.

 Finally, if $c_\tT^\circ \nabladag a^\circ$  then let $a'=a+c_\tT.$
  If $a'\in \mcA_0,$ then $a^\circ = c_\tT^\circ\in \mcA_0,$ so suppose $a'\in \tT.$ Then $a^\dag+a'= a^\circ +c_\tT \, \nabla \, c_\tT^\circ +c_\tT = c_\tT^\circ, $ implying $a^\circ = (a')^\circ$ and analogously $a^\circ = c_\tT^\circ,$ implying $a^\circ =c_\tT^\circ.$
 \end{proof}

\begin{corollary}
    \label{cia} In the notation of  Theorem~\ref{theoalphaA}, for $a\in \tT$ and $c\in \mcA,$ either $a+c \in \tT $ or $a+c = c_\tT^\circ $ or $a+c = c+c_\tT^\dag. $
\end{corollary}
\begin{proof} For $m_c=1$ the assertion is obvious, so we proceed by induction on $m_c.$
    $a+c = (a+c_\tT)+(m_c-1)c_\tT.$ If $a+c_\tT \in \tT,$ then, by induction on $\tT,$ letting $a' = a+c_\tT,$ either $a' +(m_c-1)c_\tT \in \tT,$ in which case $a+c = a' +(m_c-1)c_\tT \in \tT$ and we are done, or
    $a'+(m_c-1)c_\tT = c_\tT^\circ ,$ and we are done, or $a'+(m_c-1) c_\tT = (m_c-1) c_\tT + c_\tT^\dag,$   so $a + m_c c_\tT = a + (m_c -1)c_\tT + c_\tT = m c_\tT + c_\tT^\dag.$

    On the other hand, if $a+c_\tT \in \mcA_0,$ then $a+c_\tT = c_\tT^\circ,$
and $a+c = a +c_\tT +(m_c-1)c_\tT = c_\tT^\circ  +(m_c-1)c_\tT  = c_\tT^\dag +c.$ \end{proof}

  \begin{corollary}\label{ci}
    In the notation of  Theorem~\ref{theoalphaA}, if  $a+c \in \mcA_0$ for $a\in \tT,$ then
   either $a+c = c_\tT^\circ= a^\circ$, or $(\mcA,\mcA_0)$ is of the first  kind with $a+c =(m_c+1)c_\tT$ for pairs.
  \end{corollary}
  \begin{proof}  For pairs of the second kind, we are done unless  $c = c_\tT^\circ.$ But then  $c +c_\tT^\dag = c$ by \Cref{bip11}(ii).
The rest is by \Cref{cia}.\end{proof}

\begin{lemma}\label{parti}
    In any  metatangible pair $(\mcA,\mcA_0),$ if $b = m a$ and $c = m_c c_\tT$ with $b+c\in \mcA_0,$ then one of the following holds:
    \begin{enumerate}
        \item $m  =1,$  and either $b+c = {c_\tT}^\circ= a^\circ$ with $(\mcA,\mcA_0)$ of the second kind, or $b+c = (m_c+1)c_\tT$ with $(\mcA,\mcA_0)$ of the first kind.

              \item $b = a^\circ$, and either $b+c =b$, or $b+c = (m_c+2)c_\tT = c+c_\tT^\circ,$ with $(\mcA,\mcA_0)$ of the first kind.

              \item $b = m_b a,$ and $b+c \in \{ b, c+ 2k c_\tT, s\in \Net\}.  $

    \end{enumerate}
\end{lemma}
\begin{proof} (i) The case $m=1$ is \Cref{ci}.

(ii) Next suppose $b = a^\circ.$ First assume $a+c\in \mcA_0.$  By (i), we have several possibilities.
\begin{itemize}
    \item $a+c = {c_\tT}^\circ$ with $(\mcA,\mcA_0)$ of the second kind. If  $a^\dag + {c_\tT}^\dag \in \mcA_0 $, then $a^\dag + {c_\tT}^\dag = (a^\dag)^\circ = a^\circ,$ so $a^\circ +c = a + a^\circ.$ But $a+a \notin A_0$ implies $a+a\in \tT,$ so $b+c = a^\circ +c = (a+a) +a^\dag = a^\circ=b.$
      \item If $a+c =(m_c+1)c_\tT = c+c_\tT,$ for $(\mcA,\mcA_0)$ of the first  kind, then $b +c = a+(a+c) = a +(c+c_\tT) = c+c_\tT + c_\tT = (m_c+2)c_\tT.$
\end{itemize}

So we may assume that $a+c \in \tT.$ Then
$b+c = a  +(a+c)\in \mcA_0,$ implying $b+c= a_\tT = b.$

(iii)  $(\mcA,\mcA_0)$ is of the first kind, and $b = m a$ for $m\ge 3.$ Take $m'$ maximal such that $a+m'c_\tT\in \tT.$  If $m' = m_c$ then $b+c =(m-1)a +(a+m'c_\tT)$, and reversing the roles of $b$ and $c$ and applying (i), we have $b+c = ((m-1)+1) a =b.$ So we may assume that $m'<m.$ Then $a+(m'+1)c_\tT \in \mcA_0$, so $a+(m'+1)c_\tT = a^\circ = a+a.$ Hence $b +c = (m-1)a +a +c = (m-1)a +a+a +(m-(m'+1))c. $
    Then $a+m'c_\tT +c_\tT \in \mcA_0,$ so by Property N, $a+m'c_\tT +c_\tT = a^\circ = 2a.$
     \end{proof}

 In the presence of a negation map, we   improve \cite[Theorem~7.28]{Row21}.

\begin{lemma}\label{met1} Suppose $(\mathcal A, \mcA_0)$ is uniquely negated metatangible with all $a_i \in \tT$. \begin{enumerate}
    \item
     If $a_1+a_i \in \mcA_0$ for $i = 2,3,$ and  $a_1+a_2+a_3\in \mcA_0$ then $a_3 = a_2 = (-)a_1$, and  $a_1+a_1 \in \{a_1, a_1^\circ\}.$

     In particular,  $\mcA$ is either of the first kind or is idempotent.

       \item Suppose $\sum _{i=1}^t a_i \in \mcA_0$.\begin{enumerate}
           \item  If no subsum of length $t-1$ is tangible, then there is a subsum $a=\sum_{j\in J} a_j\in \tT$ with $J\subset \{1,\ldots, t\}$, such that  $|J|=k\leq t-2$, $a+a\in \{a, a^\circ\}$, $a_i = (-)a$ for all $i\notin J$. 
           Furthermore $\sum _{i=1}^t a_i= a^\circ$ when $a+a=a$, and $\sum _{i=1}^t a_i=  (t+1-k)a$  when $a+a\in a^\circ.$

              \item If $\sum _{i=1}^{t-1} a_i $ is tangible, then $\sum _{i=1}^{t-1} a_i = (-)a_t$.




              \end{enumerate}

\end{enumerate}
\end{lemma}
\begin{proof}
(i)    $a_i = (-)a_1 $ for $i=2,3,$ by unique negation.
Furthermore either $a_1+a_1 = (-)(a_2+a_3) \in \mcA_0,$ proving $\mcA$ is of the first kind, or $a_2+a_3 \in\tT$, then $a_2+a_3 = (-)a_1 $ by unique negation, so $(-)a_1 (-)a_1 = (-)a_1$, implying $a_1+a_1 =a_1.$

(ii)(a)  Take a  tangible subsum of a maximal length, i.e., maximal number of $a_i$, which we may assume is $a=\sum _{i=1}^k a_i,$ for $k\le t-2.$ Then $a + a_j \in \mcA_0$ for each $j>k,$ implying $a_j = (-)a.$ If $a+a \in \mcA_0$, then $a+a =a^\circ$, otherwise, $a+a \in \tT$, and since $\sum _{i=1}^{k+2} a_i\notin \tT$,  (i) shows $a+a \in \{a,a^\circ\},$ and indeed $a+a=a$.
If $a+a=a$ then the sum is $a(-)(t-k)a = a(-)a =a^\circ.$ If $a+a =a^\circ,$ then $a = (-)a,$ so the sum is $(t+1-k)a.$

(b) By unique negation.
\end{proof}

\subsection{Results about balancing (Proof of \Cref{theoalphaD})}\label{balel}$ $

We shall prove more general results, which yield  \Cref{theoalphaD} as a consequence.
\begin{lemma}\label{pl} Suppose that $(\mcA,\mcA_0)$ has weak Property~N and $\mcA=\mcA_0+\mcA^\natural$.
\begin{enumerate}
    \item
 $a \nablaT (a+b)$ for any $a\in \tT$ and $b\in \mcA_0.$

 \item   
 $b \nablaT (b+b')$ for any $b\in \mcA$ and $b'\in \mcA_0.$
\end{enumerate}
\end{lemma}
 \begin{proof}
We already checked that $\nablaT$ is a balance relation in \Cref{bal1}.
 Since $\nablaT$ is preserved by addition and reflexive, we only need to check that $b\nablaT \zero$, which follows from the definition by taking $t=0$.
%
 \end{proof}

  $\nablaT$   acts as a congruence on $\tT$ in the following situation.

\begin{definition}\label{Ntran}
%
A pair $(\mathcal A,\mathcal A_0)$  is \textbf{N-transitive} if
$a_i + a_{i+1}\in \mcA_0$ for $a_i\in \tT,$ $1\le i\le 3$, implies $a_1+a_4 \in \mcA_0.$
\end{definition}

 \begin{example}$ $ \begin{enumerate}
     \item   The pairs of Example~\ref{tr0}(iii)and Example~\ref{tr0}(iv)(a) are N-transitive.

      \item    The pair of Example~\ref{tr0}(iv)(b) has weak Property~N but is not N-transitive, by construction. To obtain a counterexample satisfying \PropertyN, one could mod out the congruence generated by $\{ (\la _i + \la_{i+1},  \la_{i+1} +  \la_{i+2}),\ i = 1,2 \}.$
 \end{enumerate}
 \end{example}

\begin{lemma}
  Any uniquely negated pair $(\mathcal
  A, \mcA_0)$  is  N-transitive (as well as satisfying \PropertyN).
\end{lemma}
\begin{proof}
     $a_i + a_{i+1}\in \mcA_0,$ so $a_1 = (-)a_2 = a_3 = (-)a_4,$ implying
 $a_1 + a_{4}\in \mcA_0.$
\end{proof}
\begin{lemma}\label{bal4} Suppose that $(\mathcal A, \mcA_0)$ has weak property N.

    \begin{enumerate}
        \item $a_1 \nabladag a_2$ implies $a_1 \nablaT a_2$, for $ a_1,a_2 \in \tT$.
         \item When $(\mcA,\mcA_0)$ is N-transitive, $b_1 \nablaT b_2$ implies $b_1 \nabladag b_2$, for $b_i\in \mcA$.
    \end{enumerate}
\end{lemma}
\begin{proof}
    (i) Suppose $a_1 +a_2^\dag \in \mcA_0.$ By definition $a_2 +a_2^\dag \in \mcA_0.$ Hence, taking $a= a_2^\dag$ we have  $a_1 \nablaT a_2$.

(ii) Write $b_i=b_{i,0}+\sum_{j=1}^t a_{i,j}$ with $b_{i,0}\in \mcA_0$ and
$a_{i,j}\in \tT$ such that $a_{i,j} +a_j\in \mcA_0$ for $a_j\in \tT,$ $i=1,2.$  Then for all $j\geq 1$, $a_{1,j}+ a_j\in \mcA_0$,   $a_j+ a_{2,j}\in \mcA_0$ and obviously $a_{2,j}+ a_{2,j}^\dag\in \mcA_0$ so, by N-transitivity, $a_{1,j} +a_{2,j}^\dag\in \mcA_0$. We deduce that $b_1 +b_2^\dag\in \mcA_0$. By symmetry, we obtain $b_1 \nabladag b_2$.
\end{proof}

\begin{proof}[Proof of \Cref{theoalphaD}]
Assume $(\mcA,\mcA_0)$ is metatangible with a negation map of the second kind.
Since $\mcA$ has a negation map, we write $\nablaminus$ for $\nabladag$.

Let $b_1,b_2\in \mcA$.
Since $(\mcA, \mcA_0)$ is uniquely negated, it satisfies \PropertyN\ together with N-transitivity;
then, by \Cref{bal4} (ii), we have that $b_1\nablaT b_2$ implies $b_1 \, \nablaminus \, b_2 $.

Conversely, assume $b_1 \, \nablaminus \, b_2 $.
Then $b_1  (-) b_2 \in \mcA_0$.
Since $\mcA$ is metatangible of the second kind, we have $\mcA = \tT \cup \mcA_0$.
If $b_1,b_2\in \mcA_0$, then $b_1\nablaT b_2$ by definition of $\nablaT$.
If $b_1,b_2\in \tT$, taking $a = (-)b_1,$ we have  $a+b_1 \in \mcA_0$ and $a+b_2 = (-)(b_1(-)b_2)\in \mcA_0.$
So $b_1\nablaT b_2$.

If $b_1\in \tT$ and $b_2\in \mcA_0$, then write $b_2=a^\circ$, by \Cref{theoalphaA}. By \Cref{ci} (replacing $b_1$ by $(-)a$ and $c$  by $a^\circ$), $a^\circ = (a^\circ + b_1) \ \nablaT\ b_1$ by \Cref{pl}.

The case $b_2\in \tT$ and $b_1\in \mcA_0$ is symmetrical.
\end{proof}

 \begin{lemma}\label{balNpropN}   Suppose that $(\mathcal A, \mcA_0)$ is  N-transitive and has property N.
 Then for any choice of $\one^\dag$, $\nabladag$~restricted to
$\tT$ 
 is transitive, and thus is an equivalence relation, and it is independent of the choice of~$\one^\dag.$
 \end{lemma}
 \begin{proof} 
 Assume $a_1,a_2,a_3\in \tT$ are such that $a_1\nabladag a_2$ and $a_2\nabladag a_3$.
Then $a_1+a_1\one^\dag\in \mcA_0$ (for instance by \Cref{ep}). 
Moreover,
 $a_1\one^\dag+a_2\in \mcA_0$, and $a_2+a_3\one^\dag\in \mcA_0$, so  $a_1+a_3\one^\dag\in \mcA_0$ by~N-transitivity.
 Similarly $a_3+ a_1\one^\dag\in \mcA_0$. 
 Hence $a_1\nabladag a_3$.

 Furthermore, let $a_1,a_2\in \tT$ and $\one^{\dag}$ and $\one^{\dag'}$ be two
quasi-negatives of $\one$.
If $a_1+a_2\one^\dag \in \mcA_0$, noting $ \one + \one^{\dag'}\in \mcA_0,$ and
 $\one^\dag+ \one\in \mcA_0,$ we have $ a_2 +a_2\one^{\dag'}\in \mcA_0,$  and $a_2 \one^\dag + a_2\in \mcA_0,$ so $a_1 + a_2\one^{\dag'}\in \mcA_0$ by~N-transitivity.
Exchanging $a_1$ and $a_2$, and then $\one^{\dag}$ and $\one^{\dag'}$,
we obtain that $a_1\nabladag a_2$ is equivalent to $a_1\nabla_{\dag'} a_2$ for
$a_1,a_2\in \tT$.
 \end{proof}

 \begin{corollary}
    If $(\mcA,\mcA_0)$ has a negation map $(-),$ then $(\mcA,\mcA_0)$ is N-transitive if and only if $\nablaminus$ is transitive.
\end{corollary}




\subsection{Interaction of Moduli and pairs (Proof of \Cref{theoalphaB})}\label{modtrp}$ $

Let us see how moduli,   \Dref{mod1}, enter the theory of pairs.

\begin{definition}
     The \textbf{natural pre-order} $\le$ on a subset $S$ of a $\tT$-module is
given by $b_1\le b_2 $ if $b_1+b = b_2$ for some $b\in S.$
\end{definition}

 \begin{lemma}\label{bip8a}(Inspired by \lemref{Gre}) $ $
\begin{enumerate}
    \item  Any $\tT$-module has the {natural pre-order}, which respects multiplication by elements of $\tT$.

\item When $\mcA$ is uniquely negated, the natural preorder on $\mcA$ restricts to a natural preorder on $\tT$.

 \item  When $(\mathcal A, \mcA_0)$ is $\mcA_0$-bipotent,
 \begin{enumerate}
  \item $a_1+a_2 = a_2$ implies $a_1^\circ + a_2^\circ =a_2^\circ$.

        \item  There is an  order on $\tT^\circ$ given by $a_1^\circ < a_2^\circ$ for  $a_1^\circ \ne a_2^\circ$,   iff $a_1^\circ + a_2^\circ =a_2^\circ$ (Green's identification).

        \item  $a_1+a = a_2 $ for $a_i,a\in \tT$ iff $a_1+a_2 = a_2$. Thus, when $\mcA$ also is uniquely negated, the natural preorder on $\mcA$ is compatible with Green's identification.
 \end{enumerate}
\end{enumerate}
\end{lemma}
\begin{proof}
(i) The  assertion is clear.

(ii)   Suppose $a_1+b = a_2$ for $a_i\in \tT$. Write $b= a + b'$ for $a\in \tT,$ $b'\in \mcA_0.$ If $a_1 +a \in \mcA_0$ then $a_2\in \tT \cup \mcA_0,$ a contradiction. Hence $a_1+a\in \tT,$ and $a_1+a (-) a_2\in \mcA_0,$ implying $a_1+a = a_2.$

(iii)(a)  If $a_1^\circ \ne a_2^\circ$ then $a_1+a_2\in \tT$ by the contrapositive of \PropertyN, implying $a_1+a_2\in \{a_1,a_2\}.$ Likewise $a_1^\dag +a_2^\dag \in \tT,$
and $ (a_1+a_2) + (a_1^\dag +a_2^\dag ) =a_1^\circ+a_2^\circ \in \mcA_0$ implies also
$(a_1+a_2) + (a_1^\dag +a_2^\dag ) =(a_1+a_2)^\circ.$

(iii)(b)  By $\mcA_0$-bipotence and (a).

(iii)(c)
 Suppose $a_1+a = a_2$. We know $a_1+a \in \{ a_1, a,  a_1^\circ\}.$
If $a_1+a = a_1^\circ$ then
$a_2 =  a_1^\circ \in \tT \cap \mcA_0,$ a contradiction. If $a_1+a = a$ then $a_2 =a,  $ so  $a_1+a_2 = a_2$. If $a_1+a = a_1$ then $a_2=a_1.$
The last assertion follows from (ii).  \end{proof}

\begin{proof}[Proof of \Cref{theoalphaB}]
(i) $\nu (a_1 a_2) = (a_1 a_2)^\circ,$  which is the product of $a_1^\circ$ and $a_2^\circ$ in $\tTz^\odot$.

(ii)(a),(b) We have (b) by Lemma~\ref{bip9}, and thus (a) for the second kind. In (a) for the first kind, one easily checks associativity since $a_1 + a_2 + a_3 \in \{a_1,a_2,a_3\}$ unless two of them are equal, in which case we use Lemma~\ref{bip8a}(iii)(a).

(ii)(c) $\mu (c_1) +\mu(c_2) = \mu(c_1+c_2)$ unless ${c_1}_\tT ={c_2}_\tT,$ in which case $\mu (c_1) +\mu(c_2) =  {{c_1}_\tT}^\circ = \mu (c_1 +c_2)$, in view of Lemma~\ref{bip8a}(iii)(a).

(iii)(a) follows by the definition of \period.
Take an element $   na\in \mcA_0$ for some $n\in \Net$ and $a\in \tT$. By~ Assumption~\ref{Note1}, $\mathbf{n} \in \mcA_0.$ Hence $\mathbf m = \mathbf{mn} \in \mcA_0, $ and clearly $mb  = mb + mb$ for all $b\in \mcA.$

 (iii)(b) The amount of uniqueness satisfied by the uniform presentation (Lemma~\ref{bip11}) shows that $\mu$ is well-defined.

\end{proof}



\section{Examples of pairs}\label{pairex}

Let us present some of the main examples.

\subsection{Metatangible pairs which lack unique negation}$ $

 \begin{example}\label{metex}$ $
\begin{enumerate}
   \item  (The \textbf{truncated pair}.) Fix  $m\ge 3$ in the ordered monoid  $(\Net,+),$  and define $$a_1``+"a_2= \min\{a_1+a_2,m\}, \qquad a_1``\cdot"a_2= \min\{a_1a_2,m\}.$$

      \begin{enumerate}

       \item Take  $\mcA = \{0,\ldots, m\},$  $\tT = \{ 1\}$, and $\mcA_0 =\{0,m\}.$ But this pair lacks weak \PropertyN.
      \item Take  $\mcA = \{0,\ldots, m\},$  $\tT = \{ 1\}$,  and $\mcA_0 =(\mcA\cap 2\Net ) \cup\{m\}.$ This pair is of the first kind, and  has   unique negation.
   \end{enumerate}

\item      Here are some  pairs  lacking unique negation.
 $\tT$ is an arbitrary cancellative monoid, $\mcA = \tTz \cup \{\infty\},$ $\mcA_0 =  \{\zero,\infty\},$ $\infty+\infty =\infty,$ and $b_1+b_2= \infty$ for all $b_1 \ne b_2$ in $\tT \cup \{\infty\}$.
There are two kinds:
\begin{itemize}
  \item First kind. Then $a+a = \infty$ for all $a\in \tT$. This pair is {N-transitive}.
  \item Second  kind. Then $a+a = a$ for all $a\in \tT$. This pair is not N-transitive.
\end{itemize}


\end{enumerate}
 \end{example}

 \begin{definition}
A $\tT$-module     $\mcA$
is \textbf{nonarchimedean} if there is $a\in \tT$ such that $\one + a = a.$
 \end{definition}

 \begin{lemma}\label{bA2}
  Every $\mcA_0$-bipotent pair  $(\mcA,\mcA_0)$ over a group $\tT$ which is not the $\mcA_0$-minimal pair of Example~\ref{metex}, is nonarchimedean.
\end{lemma}
\begin{proof}
  By hypothesis there is $a \in \tT$ such that $a +\one \in \tT$, and so $a +\one \in \{\one, a\}.$ If $a + \one  =\one$ then multiply by $a^{-1}.$
\end{proof}

  \subsection{Supertropical pairs}\label{sut}$ $

\begin{example}\label{modtr1}$ $\begin{enumerate}
    \item  Suppose $\tGz$ is an ordered monoid with absorbing minimal element $\zero_{\tG}$, and $\tTz$ is a monoid with absorbing element $\zero_{\tT}$, together with an onto homomorphism $\mu: \tTz\to \tGz$.
Take the action $\tTz\times\tGz\to \tGz$ defined by $a\cdot g = \mu(a)g.$
Then $\mcA $, defined as the disjoint union $ \tTz\cup  \tG$, with  $\zero_{\tG}$ and $\zero_{\tT}$ identified, is a multiplicative monoid  when we extend the given multiplications on $\tTz$ and on $\tG,$ also using the given $\tT$ action.

Setting $\mu(g)=g$ for all $g\in \tG,$
we  define addition on $\mcA$ by $$b_1+b_2 = \begin{cases}
 b_1 \text{ if } \mu(b_1)>\mu(b_2),\\
b_2 \text{ if } \mu(b_1)<\mu(b_2),\\
\mu(b_1) \text{ if } \mu(b_1)=\mu(b_2).
\end{cases}.$$

  We call $(\mcA,\tGz)$ the \textbf{supertropical pair arising from} $\mu.$
 $(\mcA,\tGz)$ is  an nonarchimedean pair of the first kind, and $\mcA$ is of characteristic $(1,2)$.
\begin{enumerate}
    \item  When $\mu$ is a monoid isomorphism, this is a slightly more general way of defining the supertropical semiring of \cite{IR}.

    \item Here is the initial supertropical pair.

For $\tT = \{\one\},$
we modify the  semifield  $\tTz= \{ \zero, \one\} $  to
the \textbf{super-Boolean pair}, defined as  $(\mcA,\tGz)$ where $\mcA= \{ \zero,\one,e\}$ with
$e$  additively absorbing, $\one+\one=e,$ and $\mcA_0 = \{ \zero, e\}.$

The super-Boolean    pair  is isomorphic to the  sub-pair  generated by $\one$ of each   supertropical pair.

 \item At the other extreme, taking $\tG = \{\one_\tG\} =\{ e\}$ yields
 the trivial pair (\Cref{tr0}(ii)).

 \item Non-isomorphisms $\mu$ which are not trivial give other variants such as $\absl{\phantom{w}}: \C^\times \to \R^\times,$ which we do not explore here.
\end{enumerate}
\item One can modify the supertropical pair of (i), by declaring $b_1 +\mu (b_1) = b_1.$ In particular, $\one + e =\one.$ $\mcA$   now is of characteristic $(2,1)$.
\end{enumerate}
  \end{example}

   \subsubsection{$\circ$-reversibility and tropical type}$ $

 \begin{rem}   Suppose that  a pair $(\mathcal
A, \mcA_0)$   satisfies \PropertyN.
     If $a_1+a_2 =a_1^\circ  $, then clearly $a_1^\circ =a_2^\circ $, leading us to ask when the converse holds. (This need not be the case, cf.~\Eref{metex}(ii).)
 \end{rem}

\begin{definition}\label{circidem}
\begin{enumerate}
    \item A pair  $(\mathcal
A, \mcA_0)$ satisfying \PropertyN\ is \textbf{$\circ$-reversible} when it satisfies the property:

If $a_1^\circ = a_2^\circ$ for $a_i\in \tT$,  then $a_1 = a_2$ or $a_1^\circ =  a_1+a_2= a_2^\circ$.

 \item  $(\mathcal
A, \mcA_0)$  is
 \textbf{$e$-final} if
 $e^+=e$.

 \item  $(\mathcal
A, \mcA_0)$ is of \textbf{tropical type} if it is $\mcA_0$-bipotent  and $\circ$-reversible.
\end{enumerate}
\end{definition}
\newcommand{\efinal}{\hyperref[circidem]{$e$-final}}
 Although
 classical pairs are not $\zero$-bipotent, the pairs normally used in tropical
mathematics, such as  \Eref{modtr1}, are
$\mcA_0$-bipotent, in fact, of tropical type.

\begin{remark}\label{finalplus}
If $(\mathcal A, \mcA_0)$  is $e$-final, then  also
 $e+\one^\dag=e$. Indeed, $e+\one^\dag=e \one^\dag+\one^\dag=( e+\one) \one^\dag=e \one^\dag=e$, by \Cref{modu1}.
    \end{remark}
\begin{lemma}\label{cb}
An $\mcA_0$-bipotent pair  $(\mathcal
A, \mcA_0)$ is
$\circ$-reversible   if and only it satisfies the property: \medskip

If $a_1^\circ = a_2^\circ$  and   $a_1 +a_2 = a_2,$ for $a_1,a_2\in \tT$, then  $a_1=a_2$.
\end{lemma}
\begin{proof}

$(\Rightarrow)$ For $a_1\ne a_2,$ $a_2^\circ = a_1+a_2 = a_2 \in \mcA_0\cap \tT$ would be a contradiction.

$ (\Leftarrow)$ Clear unless $a_1\ne a_2, $
but then by hypothesis $a_1+a_2 \ne a_1,a_2$, so $a_1+a_2 = a_1^\circ = a_2^\circ$.
\end{proof}

\subsection{Hypersemigroup pairs, hyperpairs and hyperring  pairs}\label{hyps0}$ $

 Hyperpairs provide an important class of pairs which may fail to be metatangible, and provide a fine source of counterexamples.
Some of the finer points, in the relationship of ``systems'' and ``hypersystems,''  were studied in \cite{AGR2}, which
also provides other examples concerning ``fuzzy systems,'' tracts,
matroids,    and ``geometric'' systems.

As noted in \cite[\S 2.4]{JMR1}, the same constructions of
Krasner \cite{krasner} can be carried out quite generally for semirings without a
negation map, and yield a  pair, which may have \PropertyN.
We briefly consider an even more general situation.

\begin{definition}\label{hy7}  Let  $\mathcal H$ be a set, and $\tT$ a subset of $\mathcal H$.  We denote by $\nsets (\mathcal H)$ the set of non-empty subsets of $\mathcal H$.   \begin{enumerate}
    \item We are given
  \begin{enumerate}
  \item Binary operations $\tT \times \mcH \to \mcH$ and $ \mcH \times \tT \to \mcH$ denoted by concatenation, and such that
  $(a_1b)a_2=a_1(ba_2)$ for $a_i\in \tT$ and $b\in \mcH$.
Then we define the action of $\tT$ on $ \nsets (\mathcal H )$ by $aS = \{as : s\in S\}$ and  $Sa = \{sa : s\in S\}$.
     \item A commutative multivalued addition $\boxplus : \mathcal{H}\times \mathcal{H}\to \nsets (\mathcal H),$ which is \textbf{associative }   in the sense that if we
 define
\[ a \boxplus S = S\boxplus a =\bigcup _{s \in S} \ a \boxplus s,
\]
 then $(a_1
\boxplus a_2) \boxplus a_3 = a_1 \boxplus (a_2\boxplus a_3)$ for all
$a_i$ in $\mathcal H .$


\end{enumerate}
    \item   We view $\mcH$ as the set of singletons in $\nsets (\mathcal H ),$ identifying $a\in \mathcal{H}$ with~$\{a\}$.  $(\mathcal H  ,\boxplus,\mathcal \Hzero)$  is a \textbf{hypersemigroup}
 when  $\mathcal H$ has an absorbing element $\Hzero$ called \textbf{the hyperzero}, satisfying $\Hzero \boxplus a = a \boxplus \Hzero = a$ for all $a\in \nsets (\mathcal H )$. We assume that $\Hzero \notin \tT.$ Let $\tTz = \tT \cup \{\Hzero\}$.

 \item  Defining $  S_1 \boxplus S_2 := \bigcup_{s _i\in S_i}\left(s_1 \boxplus s_2 \right)$ makes $\nsets (\mathcal H )$ a  weakly \admissible\ $\tTz$-bimodule. Take $\mcA$ to be the $\tT$-sub-bimodule of $ \nsets (\mathcal H )$ that is $\boxplus $-spanned by $\mcH$.

   \item Take any $\tT$-sub-bimodule $S_0$ of $\mcA$ for which $S_0 \cap \tTz= \{\Hzero\}.$ Then we get a \textbf{ hypersemigroup pair} $(\mcA,\mcA_0)$ under any of the following situations:
   \begin{enumerate}
       \item  $\mcA_0 = \{S \in \mcA : S_0 \subseteq S \}. $
       \item
   $\mcA_0 = \{S \in \mcA : S_0 \cap S \ne \emptyset\}. $
  \end{enumerate}
  \item  $(\mcA,\mcA_0)$ is the \textbf{hyperpair} (of $\mathcal H $) when $S_0 = \{\Hzero\}.$ (Then (a) and (b) of (iv) are the same.)

   \item A \textbf{hypergroup} is a hypersemigroup for  which every element $ a \in \mathcal H  $ has a unique \textbf{hypernegative} $-a \in \mathcal H  $, in the sense that, when $\tTz = \mcH,$
 the  hyperpair is uniquely negated in the sense of Definition~\ref{negmap}, i.e.,  $\mathcal \Hzero \in a \boxplus
   (-a).$

    \item A \textbf{hypergroup pair} is a hyperpair of a hypergroup.
   \end{enumerate}
  \end{definition}

\begin{rem}$ $
   \begin{enumerate}
       \item We   saw in \cite[Lemma~5.3]{AGR2} that    in a hyperpair $(\mcA,\mcA_0)$,  one has:

$\quad  a_1 \in a_2 \boxplus a_3 \quad \text{iff} \quad a_3 \in
a_1  \boxplus (-a_2).$

\item A hypergroup is \textbf{stringent} in the sense of \cite{BSu} if and only if its hyperpair is metatangible.
   \end{enumerate}
\end{rem}




\begin{lemma}\label{hypr}
   For  a hypergroup pair,  $S_1 \nablaT S_2$ for sets $S_1,S_2,$ iff $S_1 \cap S_2 \neq \emptyset.$
\end{lemma}
\begin{proof} If $a\in S_1 \cap S_2,$ then $\zero\in S_1 \boxplus \{-a\}$ and  $\zero\in S_2 \boxplus \{-a\}$, so $S_1 \nablaT S_2$.  Conversely,
  if $S_1 \nablaT S_2$ then $\zero\in S_1 \boxplus \{a\}$ and $\zero\in S_2\boxplus \{a\}$ for some $a\in \mathcal H$, implying $-a \in S_1 \cap S_2 .$
\end{proof}

\begin{definition}$ $
    \begin{enumerate}
   \item  When  $\mathcal H$ also is a  multiplicative monoid, in which $\Hzero$ is an absorbing element, then
$\nsets (\mathcal H  )$ has a  natural elementwise multiplication, for which $ \Hzero$ still is an absorbing element. This makes $\nsets (\mathcal H  )$ an nd-semiring.  $\mathcal H $ is a \textbf{hypersemiring} if this multiplication is distributive over hyperaddition in $\mathcal H $, in the sense that
  $\boxplus a S_i = a\boxplus S_i$ where we define $aS_i = \sum a s_i : s_i \in S_i\}$ for $a\in\tT$.

   \item   A hyperring (resp.~hypersemiring)~$\mathcal H $ is a \textbf{hyperfield} (resp.~\textbf{hyperfield}) if $\mathcal H \setminus \{\Hzero\} $ is a  multiplicative group.
    \end{enumerate}
\end{definition}

\begin{rem} $ $
 \begin{enumerate}
     \item  For  a  hypersemigroup pair, we have the surpassing relation $S_1 \preceq_\subseteq S_2$ when $S_1 \subseteq S_2$.

      \item
      When $\mathcal H$ is a hypersemiring, its hyperpair $(\mcA,\mcA_0)$  is a semiring if  $\nsets (\mathcal H )$ is distributive\footnote{In general $\nsets (\mathcal H  )$  always satisfies $(\boxplus _i S_i)(\boxplus _j S_j')\subseteq \boxplus _{i,j} (S_iS_j')$, cf.~\cite[Proposition 1.1]{Mas}.}.   Otherwise, $\mcA$ need not even be closed under multiplication.

      \item In a hypergroup pair one defines the negation map $(-)S := \{ (-)s: s\in S\}. $ Then, as in \cite{AGR2}, obviously $S_1 \nablaT S_2$ if and only if $S_1 \, \nablaminus \,S_2$.
 \end{enumerate}
\end{rem}

Here are the appropriate morphisms for the hypertheory \cite{Vi}, special cases of weak morphisms (\Cref{symsyst}) and $\subseteq$-morphisms (\Cref{pmor} below) of pairs.

\begin{definition}\label{hmorph} A \textbf{weak hypermorphism} of hypersemigroups is a map $f: \mathcal H_1 \to \mathcal H _2  $ for which  $\Hzero \in \boxplus a_i $ implies $0 \in \boxplus f(a_i)$, for $a_i \in \mathcal H_i. $

A $\subseteq$-\textbf{hypermorphism} of hypersemigroups is a map $f: \mathcal H_1 \to \mathcal H _2  $ for which  $f(\boxplus a_i )\subseteq \boxplus f(a_i)$,   $a_i \in \mathcal H_i. $
\end{definition}

\subsubsection{The  hyperfield of signs}\label{signsem}$ $

There is a hyperfield of special relevance
to this paper.
\begin{example}$ $
 \begin{enumerate}
\item  The \textbf{hyperfield of signs} $L := \{0, 1, -1 \}$  has the intuitive multiplication
law, and hyperaddition defined by $1 \boxplus  1 = 1 ,$\ $-1
\boxplus  -1 = -1  , $ $\ x \boxplus  0 = 0 \boxplus  x = x $ for
all $x ,$ and $1 \boxplus  -1 = -1 \boxplus  1 = \{ 0, 1,-1\} .$

\item Alternatively, we define the \textbf{sign
semiring} $L = \{0, 1, -1,\infty \},$  with the usual multiplication
law, where $\infty$ denotes an additively absorbing element, and
addition is defined by $$1 +1 = 1 ,\qquad -1 + (-1) = -1  ,\qquad x + 0 = 0+
x = x,\  \forall x ,\qquad 1 + ( -1) = \infty .$$ We have the natural
negation map satisfying $(-)1 = -1$. Hence $\infty = 1 ^\circ$  and $(-)\infty
= \infty$. We take $L_0 = \{\infty\}.$
\end{enumerate}
\end{example}

 \begin{rem}
 Here are some instances of  nd-semiring isomorphisms involving hyperpairs.
 \begin{itemize}
   \item
 There is an isomorphism from the sign semiring pair
to the    hyperpair of signs, given by $$ -1 \mapsto
-1, \qquad  0 \mapsto 0, \qquad  +1 \mapsto +1, \qquad  \infty
\mapsto \{ 0, 1,-1\} .$$  It also is isomorphic to the doubled Boolean pair, seen by sending $\infty \mapsto \{ \one,\one\}.$

\item  (As in \cite[Example 3.17]{AGR2})  The hyperpair of the tropical hyperfield of \cite{Mit} and \cite[\S 5.3]{Vi} is isomorphic to the corresponding supertropical pair of \Eref{modtr1}(i), sending $a\mapsto a$ and $[-\infty,a]\mapsto a^\mu$.

\item  Viro \cite[\S 4.7]{Vi} has another hyperfield,
isomorphic to the modified supertropical pair of \Eref{modtr1}(ii), sending $a\mapsto a$ and $[-\infty,a)\mapsto a^\mu$.

\end{itemize}
\end{rem}

 \subsubsection{Quotient hyperpairs}$ $

 Krasner \cite{krasner} discovered a construction which ties in
 beautifully to classical field theory and arithmetic.
 We present it more generally in the context of semigroups; also see \cite{Row25}.

 \begin{theoalpha}\label{theoalphaE}$ $
 \begin{enumerate}
     \item Suppose $(\mathcal S,+)$  is a semigroup and $f:\mathcal S \to \bar {\mathcal S}$ is any  set-theoretic map onto a set $\bar {\mathcal S}$. Define   hyperaddition $\boxplus: \bar {\mathcal S} \to \mathcal{P}(\bar {\mathcal S})$
  by $\bar a \boxplus \overline{a'} = \{ \overline{a+a'}: f(a) = \bar{a}, f(a') = \overline{a'}\},$ for $a,a'\in {\mathcal S}.$ Then $\bar {\mathcal S}$ is a hypersemigroup (i.e., is associative). If  ${\mathcal S}$ is an additive  group then $\bar {\mathcal S}$ is a hypergroup, where $-\bar{a}=  \overline{-a}.$
  \item Suppose a monoid $G$ acts on a semiring $(R,+)$, such that the orbits are multiplicative, i.e., $(g_1 a)( g_2 a' )= g_1g_2(aa').$ Write $  \overline { a} = Ga,$ for $a\in R.$ Then $\bar R$ is a hypersemiring, with hyperaddition of (i), and where $\overline a \overline{a '}:= \overline{aa'}.$

    \item  Suppose that $\tTz$ is a monoid. For any $\tTz$-semiring $R$, and any subgroup $\tG$ of $\tTz$ which is a normal subgroup of $R$ (which is the case if $R$ is commutative), the set of multiplicative cosets $R/\tG = \{ b\tG: b \in R\}$ is a hyperring,  as in (ii), with $\tTz$ acting naturally on the cosets.
Let $\mcA = \nsets(R/\tG)$. Then $(\mcA,\mcA_0)$ is a $\tT/\tG$-hypersemiring pair, where:
\begin{enumerate}
    \item   $\mcA_0 = \{S \in \mcA: \zero \in S\}$.

   \item (Generalizing (iii)) given   a multiplicative ideal $M$ of $R$,     $\mcA_0 = \{S \in \mcA:  S\cap M\ne \emptyset\}.$
\end{enumerate}
 \end{enumerate}
\end{theoalpha}

 \begin{proof} (i) As in  \cite{krasner}.
  The verification is the same as given in \cite[Proposition~2.13]{JMR1}. Namely $$(\bar a \boxplus \overline{a'})+\boxplus \overline{a''} = \{ \overline{a+a'+a''}: f(a) = \bar{a}, f(a') = \overline{a'}, f(a'')=\overline{a''}\} =\bar a \boxplus (\overline{a'} \boxplus \overline{a''}).$$

     (ii) First note by induction that $   \boxplus b_i  \tG = \{ \sum a_i \tG : \quad a_i   \in b_i\tG \}.$ Then  $$ a\tG(\boxplus_{i=1}^{m} b_i  \tG )= \left\{ a\tG  \sum a_i'\tG:\ a_i' \in b_i \tG\right\}    =\left\{ (\sum a  a_i')\tG: \,a_i' \in b_i \tG\right\} =  \boxplus_{i=1}^{m} a\tG b_i  \tG. $$
     Hence $\mcA$ is a $\bar R$-bimodule, implying $\bar R$ is a hypersemiring.

     (iii)(a),(b)  The  hypotheses of (ii) are satisfied, and $\mcA_0$ is obviously a multiplicative ideal.
   \end{proof}

 \begin{definition}\label{Krs}
 We call $R\to \mcA = R/\tG$ the \textbf{Krasner map} and say that the  pair $(\mcA,\mcA_0)$ of \Cref{theoalphaE} is a \textbf{residue  hyperpair}, or \textbf{quotient hyperpair}.
 \end{definition}
 \begin{example}\label{KrT}
 Suppose that $R$ is a $\tTz$-semiring with a multiplicative homomorphism $\mu: R \to \mcM,$ where $\mcM$ is an ordered abelian group. For a subgroup $\tG\subseteq \tTz$, we  form the quotient hypersemiring  $\mcA: = \nsets (R/\tG)$. If $\gamma =\max \{ \mu ( g) : g\in \tG\}\le \one$ is bounded on $\tG,$ for example, if $\tG$ is comprised of roots of~$\one$, we define  $$\mu (b\tG )=   \gamma \mu(b ). $$
Normalizing, we can take $\gamma=1$ and  $\mu (b\tG )= \mu(b).$ Then $\mu$ is a modulus on $\mcA$.
 \end{example}

Many hyperpairs are   quotient hyperpairs, cf.~\cite{MasM};
 in particular, the    hyperpair of signs   is isomorphic to the quotient hyperpair arising from $\R/\R^+.$
  As noted in \cite{MasM}, other examples   are not so obvious, such as the ``phase hyperfield'' which is isomorphic to $\C/\R^+$,  and the Krasner hyperfield $F/F^*$. Nevertheless, many hyperpairs are not quotient , hyperpairs, cf.~\cite{HoJ,Ho}.



\subsection{Doubling}\label{Making}$ $

There is a general way, inspired by \cite{Ga,AGG2,GK}), to embed any   pair   into a   pair with a negation map  of the second kind.

\subsubsection{Doubling of a module}

\begin{definition}\label{Sym2}(See~\cite{Ga}, \cite[Theorem~4.2]{AGR2}; analogous to symmetrization in \cite{Row21}). For
any $\tT$-bimodule $(\mcA,+,\zero)$. We define $\widehat{\mcA} = \mcA\times \mcA,$   and $$\widehat{\tT } = ({\tT }\times \zero )\cup (\zero
\times {\tT }) \subset \widehat{\mcA} ,$$
$$\widehat{\tT}_\zero = ({\tTz }\times \zero )\cup (\zero
\times {\tTz }) =  \widehat{\tT } \cup \{\zero,\zero\} \subset \widehat{\mcA} .$$

 The \textbf{twist action} of $\widehat {\tT}_\zero $
on $\widehat{\mathcal A}$
 is defined as follows:

 \begin{equation}\label{twis} (a_0,a_1)\ctw (b_0,b_1) =
 (a_0b_0 + a_1 b_1, a_0 b_1 + a_1 b_0),\ (a_0,a_1) \in \widehat {\tT}_\zero, (b_0,b_1) \in \widehat{\mathcal A}.  \end{equation}

When $\mcA$ is an nd-semiring, one can define multiplication in $\widehat{\mcA} $
by \eqref{twis} for $a_i\in \mcA.$
\end{definition}

\begin{example}
\label{Boo1} The   \textbf{doubled Boolean semifield}, which is $$\{ (\zero,\zero), (\one  ,\zero), (\zero, \one),  ( \one, \one) \}.$$

This can be rewritten with a negation map as $\{\zero, \one, (-)\one, \infty\}$ where $\one (-) \one = \infty.$
\end{example}

In \Cref{theoalphaF} we shall see that the
doubling procedure   defines a  functor of  the relevant categories, providing
  a negation map of the second kind.

\begin{lemma}\label{sw0}
     Suppose that $\mcA$ is a weakly \admissible\
$\tT$-module (resp.~mon-module, resp.~gp-module).
Then \begin{enumerate}
    \item $\widehat{\mcA}$ is a weakly \admissible\ $\widehat{\tT }$-module (resp.~mon-module, resp.~gp-module).

      \item   There is an embedding ${ A }\to
\widehat {\mathcal A }$ given by $b \mapsto (b,\zero)$.

\item  $\widehat {\mathcal A }$ has the same characteristic as
 $\mathcal A$.
\end{enumerate}
\end{lemma}
\begin{proof}
      $\one_{\widehat{\tT}} =
       (\one,\zero)$ is the unit element since $(\one,\zero)(b_1,b_2) = (b_1+\zero,b_2+\zero).$
          When $\tT$ is a group, $\widehat{\tT}$ is a
group since $(a,\zero)^{-1} = (a^{-1} ,\zero)$ and $(\zero,a)^{-1} =
(\zero,a^{-1} ,\zero)$.
The rest is clear.
\end{proof}
\begin{lemma}\label{sw} Notation as in \Cref{sw0}, define
$\widehat{\mcA}_0 = \{(b,b): b \in A \}.$ Then \begin{enumerate}
    \item  $(\widehat{\mcA},\widehat{\mcA}_0 )$ is a weakly \admissible\ $\widehat{\tT }$-pair  (resp.~mon-pair, resp.~gp-pair).

    \item The ``switch'' $(-)(b_0,b_1)  := (b_1,b_0)$ is a negation map  of the second kind on $\widehat {\mathcal A }$,  under which
$(\widehat {\mathcal A },\widehat {\mathcal A }_\zero)$ is uniquely negated.
\item The surpassing relation $\hat{\preceqzero}$ on $\widehat{\mcA}$ is given by $(b_1,b_2)\hat{\preceqzero} (b_1+b,b_2+b),$ for $b_1,b_2,b\in \mcA.$

     \item $\widehat{\mcA}$ is an nd-semiring   (resp.~semiring) when $\mcA $ is  an nd-semiring   (resp.~semiring).

\end{enumerate}
\end{lemma}

\begin{proof}
 $(-)(\one,\zero)= (\zero,\one)$, and the
switch is a negation map of the second kind. Clearly $\widehat{\mcA}^\circ =\widehat{\mcA}_0 $.
If $(a,\zero)\in \widehat{\tT}, $ then its only negation is $(\zero,a).$

The other assertions are immediate.
 \end{proof}

 The Krasner map  is functorial with respect to doubling.
 \begin{lemma} Suppose $\tT$ is a group.
 If $\mcA$ is a $\tT$-bimodule then
 $(\widehat{\mcA/\tT};\subseteq) \cong  (\widehat{\mcA}/\widehat \tT;\subseteq).$
 \end{lemma}
 \begin{proof}
  One can first double and then mod out the group $\widehat \tT$, or first  mod out the group $\tT$ and then double, obtaining the same structure.
 \end{proof}

Doubling also provides a useful ``norm.''

\begin{lemma}\label{abs1} For any triple $(\mcA,\mcA_0,(-)),$
there is a  multiplicative map
$\widehat {\mathcal A } \to \mcA$
sending $$(b_1,b_2)\mapsto \absl{(b_1,b_2)}: = b_1 (-) b_2.$$
\end{lemma}
\begin{proof}\begin{equation}
    \begin{aligned}
     \absl{(b_1,b_2)\ctw(c_1,c_2)}& = \absl{b_1c_1+b_2c_2, b_1c_2+b_2c_1}\\ & = b_1c_1+b_2c_2 \, (-)   b_1c_2 \, (-) b_2c_1 \\ & = (b_1 (-)b_2)(c_1 (-)c_2) = \absl{(b_1,b_2)}\, \absl{(c_1,c_2)}.
    \end{aligned}
\end{equation}
\end{proof}



\subsubsection{$\nabla$-Doubling of a pair and of a triple}\label{dbl1}$ $

Now, given instead a pair $(\mcA,\mcA_0)$  with a balance relation $\nabla$, we modify the doubling construction to   embed $(\mcA,\mcA_0)$  into a pair with a negation map. This method will be  relevant to our
treatment of matrix theory.

\begin{lemma}[Generalizing \Cref{Sym2} in view of \Cref{tr1a}]\label{dbl}  Given an
\admissible\ pair $(\mcA,\mcA_0)$ with a balance relation $\nabla$,
consider the $\widehat{\tT }$-bimodule
$\hat {\mcA}$ with $\hat {\mcA} = \mcA\times \mcA $ and $\widehat{\tT } = ({\tT }\times \zero )\cup (\zero
\times {\tT })$ as in \Cref{Sym2}.
Now define ${\widehat {\mcA}_{0}} = \{ (b_1,b_2) \in  \widehat {\mcA}: b_1 \nabla b_2\}.$
  Then   $\widehat{(\mcA,\mcA_0)}_\nabla:= (\hat {\mcA},{\widehat {\mcA}_{0}})$ is a $\widehat {\tT}$-pair, which will be called the
$\nabla$-\textbf{doubled pair}.  The switch is a negation map.
If $(\mcA,\mcA_0)$ is a mon-pair, then so is~$\widehat{(\mcA,\mcA_0)}_\nabla$.
\end{lemma}

 \begin{proof}
 $(b_1,b_2)\in {\widehat {\mcA}_{0}}$ and $a\in \tTz$ implies
 $(a,\zero)\ctw (b_1,b_2) = (a b_1 , a  b_2 )  \in {\widehat {\mcA}_{0}}, $ by definition of a balance relation.
 Similarly, $(b_1,b_2)\ctw (a,\zero)\in {\widehat {\mcA}_{0}}$, $(\zero,a)\ctw (b_1,b_2)\in {\widehat {\mcA}_{0}}$.
 The switch clearly preserves $\widehat{\tT }$. It also preserves
 ${\widehat {\mcA}_{0}},$ since $\nabla$ is symmetric.
 By definition $\nabla$ is additive, so the addition preserves ${\widehat {\mcA}_{0}}$. So ${\widehat {\mcA}_{0}}$ is a sub-bimodule of ${\widehat {\mcA}}$.
The remaining verifications are clear.
 \end{proof}

If $(\mcA,\mcA_0,(-))$ is a triple we use $\nablaminus$ for $\nabla.$
\begin{lemma}
  If
  $(\mcA,\mcA_0,(-))$ is a $\tT$-semiring t  riple, then
 $\widehat {\mcA}_0 $ is an ideal of $ \hat {\mcA} $  when ${\mcA}_0 $ is an ideal of $  {\mcA}$.
\end{lemma}
\begin{proof} For $(c_1,c_2)\in \widehat {\mcA}_0 ,$
$$(b_1c_1+b_2c_2)(-)(b_1c_2+b_2c_1) = (b_1(-)b_2)(c_1(-) c_2) \in \mcA_0.$$
\end{proof}

\begin{example}
    The $\nabla$-doubling of the trivial pair $(\mcA,\zero)$ with the trivial balance relation is just the doubling of \Cref{Sym2}.
\end{example}

 $\widehat{(\mcA,\mcA_0)}_\nabla$  in general fails property~N, since $(\one,\zero)$ has quasi-negatives $(\one^\dag,\zero)$ and $(\zero,\one).$


\section{The tools of linear algebra over pairs}\label{lina}$ $

Finally we are ready for linear algebra over pairs. For the remainder of this paper, we assume furthermore:
 \begin{assump}\label{Note2}$ $
 \begin{enumerate}
     \item  $(\mcA,\mcA_0)$ is a $\tT$-mon-pair satisfying \PropertyN. \footnote{When $(\mcA,\mcA_0)$ has no given negation map  $(-)$,
we get one by  embedding it inside the doubled pair
$(\hat{\mcA},\hat{\mcA_0})$ of~\Cref{sw}.
 In this manner, one could reformulate the results of this paper for pairs in general.}

        \item    $( k \one )( k'\one) =(kk')\one$ for all $k,k'\in \Net^+.$

        \item   The monoid $(\tTz,\cdot,\one)$ is  commutative.

    \end{enumerate}
 \end{assump}
Let us now bring in the standard concepts of linear algebra.

\subsection{Matrices}\label{matra0}$ $

Some of the main notions of rank require matrices.      $A$~denotes an $n\times n$ matrix   over  $\mcA$.
Proceeding further requires a version of the determinant.
  \begin{definition}
A \textbf{tangible matrix} is a matrix all
of whose entries are in $\tTz$.
\end{definition}

    \begin{INote}
    We only deal with $\mcA^\natural,$ so we may assume that $\mcA =\mcA^\natural$  is admissible.

   One must take care that if the original $\mcA$ is already an nd-semiring,   $\mcA^\natural$ need not be a sub-nd-semiring since its multiplication may differ,
but we still can define matrix multiplication over $\mcA^\natural$.
   The product of  any matrix in $\mcA^\natural$ times a tangible matrix  agrees with matrix multiplication over an nd-semiring $\mcA$, and in fact $A(BA') = (AB)A'$ for any tangible matrices $A,A'$ and any matrix $B$ over $\mcA$, but $  A^2 A^2$ and $AA^2A$ may differ for a tangible matrix~$A$.

  \end{INote}

\subsubsection{Singularity}\label{det1}
$ $


A \textbf{track} of an $n \times n$ matrix
$ A = (a_{i,j})  $ is a product $a  _\pi := a_{ \pi(1),1 } \cdots a_{ \pi(n),n }$ for $\pi\in S_n.$
When $A$ is tangible, the tracks clearly exist and are tangible or $\zero$, so  the following formulas 
make sense.

\begin{equation}\label{eq:tropicalDetsignsyma}
 {\Det A} _+ = \sum _  {\pi  \in S_n \text{ even }} a_{\pi}, \qquad {\Det A} _- =
 \sum _ {\pi  \in S_n \text{ odd }}   a_{\pi}.
\end{equation}

This allows us to define singularity of matrices with respect to a balance relation $\nabla$ as follows:

  \begin{definition}

The matrix $A$ is  $\nabla$-\textbf{singular} if ${\Det A} _+ \nabla {\Det A} _-$. The matrix $A$ is  \textbf{singular} if ${\Det A} _+ = {\Det A} _-$. The matrix $A$ is $\zero$-\textbf{singular} if ${\Det A} _+ = {\Det A} _- =\zero$.
  \end{definition}


\begin{rem}
   The matrix $A$ is  {singular} iff $(A,\zero)$ is  $\nabla$-{singular} in the doubled pair.
\end{rem}


  \begin{lemma}\label{dsum}
    If $\mathbf{v_n} = \sum_{i=1}^{n-1}\mathbf{v_i},$ then the matrix $A$ whose rows are $\mathbf{v_1},\dots, \mathbf{v_n}$ is $\nabla$-singular for any balance relation $\nabla$.
\end{lemma}
\begin{proof}
  The $+$ and
$-$ parts in \eqref{eq:tropicalDetsignsyma} match.
\end{proof}

\subsubsection{Special case: $(-)$-Determinants of matrices over commutative semiring pairs}$ $

In this subsection we
assume for convenience that $(\mcA,\mcA_0)$ has a negation map $(-)$, noting that we can  obtain  a negation map by doubling, and embedding $(\mcA,\mcA_0)$ into $(\widehat \mcA, \widehat{\mcA_0})$ if necessary. We use $\nabla = \nablaminus.$
This allows to define the $(-)$-determinant  as follows.

 \begin{definition}\label{mdet} For a triple $(\mcA,\mcA_0,(-)),$
the $(-)$-\textbf{determinant} $\Det A$ is ${\Det A} _+ (-)  {\Det A} _-.$
In other words,  for $\pi \in  S_n$, write $(-)^{\pi}$  for $(-)^{\sgn\pi}$.
Then $\Det A = \sum_{\pi \in  S_n}  (-)^{\pi} a_\pi$,
taken over the tracks $a_\pi$.

Thus a matrix $A$ is $\nabla$-singular if and only if $\Det A \in \mcA_0$.
\end{definition}

For pairs of the first kind, the $(-)$-determinant is just the permanent.
\begin{lemma}\label{singp}
       The product of a $\nabla$-singular matrix $A$ with another $\nabla$-matrix $B$ is singular.
\end{lemma}

\begin{proof}
    $|AB| \succeq_0 |A| |B| \in \mcA_0.  $
\end{proof}

\begin{definition}\label{adjo} Write $(-)^0$ for $+$, $(-)^1$ for~$(-),$ and, inductively, $(-)^k$ for
$(-)(-)^{k-1}.$  Write $a_{i,j}'$ for
the $(-)$-determinant of the $(j,i)$ minor of a matrix $A$. The
\textbf{$(-)$-adjoint} matrix $\adj A$ is $( (-)^{i+j}a_{i,j}')$.
\end{definition}

Zeilberger \cite{zeilberger85} showed that a number of determinantal
identities of matrices admit bijective proofs, in the sense that the tracks pair off. (This also enables one to prove Lemma~\ref{singp}  without resorting to a negation map.)
Reutenauer and Straubing
\cite{ReS} observed  as a consequence that these   identities have
semiring analogs. They also showed that some of these analogues can
be proved by algebraic arguments.

A general version of the argument of Reutenauer and Straubing was formalized
as a ``transfer principle'' in~\cite{AGG1}. This principle
stated in particular
that a classical polynomial identity of the form $P=Q$
in which $P,Q$ are multivariate polynomial expressed as sums of distinct monomials has a valid version over any semiring with negation map. Observe that all the monomials
appearing in the expansion of the left-hand side of~\eqref{e-Lap1} are distinct, and so do all the monomials appearing at the right-hand side of~\eqref{e-Lap1}. Then, the result
follows by an immediate application of this principle.

This provides a machinery for lifting results   from matrix theory, often using $\nablaminus,$   illustrated as follows:

\begin{lemma} Assume that  $(\mcA,\mcA_0)$ has a negation map.  For any $A, A_i \in M_n(\mcA),$  \begin{enumerate}
     \item     $\Det{A_1A_2}\, \nablaminus \, \Det{A_1}\Det{A_2},$
 \item  ${|A|}I \, \, \nablaminus \, \,  {\adj A}A$
and ${|A|}I  \, \, \nablaminus \, A \, {\adj A}$.
\end{enumerate}

When $(\mcA,\mcA_0)$  furthermore has a pre-surpassing relation $\preceq,$
\begin{enumerate}
    \item     $\Det{A_1A_2}\succeq \Det{A_1}\Det{A_2},$
 \item  ${|A|}I  \preceq {\adj A}A$
and ${|A|}I  \preceq A \, {\adj A}$.
\end{enumerate}\end{lemma}
\begin{proof} In both cases:
 (i)   Immediate from the argument of \cite[\S~5]{zeilberger85}.

 (ii) This could be extracted from Reutenauer and H.~Straubing \cite[Lemma 3]{ReS};  it
also follows at once from the ``strong'' transfer principle of
\cite{AGG1}, since the formula for the  $(-)$-determinant does not involve
repeated monomials.
\end{proof}

Analogs of famous results, for  a pair  $(\mcA,\mcA_0)$ with a negation map:

\begin{theoalpha}[Cayley-Hamilton theorem]\label{theoalphaG}
    For $A \in M_n(\mcA),$ let $f(\la) =|\la I (-) A|.$ Then $f(A)\in \mcA_0.$
\end{theoalpha}
\begin{proof} (also cf.~\cite{IzhakianRowen2008Matrices})
Essentially a rewording of \cite{St}, since the extra terms come in pairs with opposite signs; see \cite{zeilberger85} for a clear graph-theoretic argument.
\end{proof}

\begin{theoalpha} [Generalized Laplace identity and Cauchy-Binet formula]
\label{theoalphaH}
Laplace's well-known identity $$|A| = \sum _{j=1}^n (-)^{i+j}
a'_{i,j}a_{i,j},$$ for any  $i$, holds  over a  pair with a negation map, where $(a_{ij})$
 is a square matrix, and $(a'_{ij})$ the associated comatrix.

More generally, fix $I = \{ i_1, \dots, i_m \}\subset \{1, 2, \dots ,
n\},$ and for any set $J = \{ j_1, \dots, j_m\}\subset \{1, 2, \dots ,
n\}$,    write    $(-)^J$ for $(-)^{ j_1 +\dots + j_m},$ $(a_{I,J})$
for the $m\times m$ minor $(a_{i,j}: i\in I, j \in J)$, and
$(a_{I,J}')$ for the $(-)$-determinant of the $(n-m)\times (n-m)$
minor obtained by deleting all rows from $I$ and all columns from~$J$. Then \begin{align}\label{e-Lap1}|A| = \sum _{J : |J|= m} (-)^{I}(-)^{J} a'_{I,J}|a_{I,
J}|.\end{align}
\end{theoalpha}
\begin{proof}
This could be proved by a bijective argument, following
the idea of \cite[Exercise~3]{zeilberger85}.
Alternatively, we could rely on the approach of Reutenauer
and Straubing, who proved in \cite[Lemma 3]{ReS} the important special case where $m=1$, in which the positive and
negative parts of determinants are handled separately.
\end{proof}

(This result extends \cite[Example~3.8]{AGG1} and
\cite[Lemma~8.2]{PH}, the latter formulating an analogous result by
means of $|A|_+$ and $|A|_-$.)

The ideas of Zeilberger's bijective proofs, and the transfer principle, can take us quite far, but
as we shall see, one must be careful since some subtler conjectures have counterexamples.

 \subsubsection{$(-)$-Determinants over $\tT$-hypersemirings of Krasner type}$ $

Let us interpret $(-)$-determinants   over commutative
$\tT$-hyperrings of Krasner type.
 \begin{lemma} \label{krd} $ $\begin{enumerate}
\item
Let $R$ be a semiring, and $\tG$ a multiplicative subgroup, and as in \Cref{theoalphaE}, $\bar A = \big\{ (a_{i,j} g_{i,j}): a_{i,j} \in R, \, g_{i,j}\in \tG\big\}. $
Then
 \begin{equation}
     \label{Kd}
     \Det{\bar A} = \bigg\{\sum _{\pi  \in S_n} (-1)^\pi a_\pi g_\pi : g_\pi \in \tG\bigg\}.
 \end{equation}

 \item
 $\Det{\bar A} \in \mcA_0,$ if and only if $\sum _{\pi  \in S_n }(-1)^\pi a_\pi g_\pi = \zero$ for suitable $g_\pi \in \tG$.

 \item
 Vectors $(a_{i,1}\tG,\dots, a_{i,n} \tG), $
 $1\le i \le n$, are $\overline{\mcA_0}$-dependent if  $(a_{i,1} g_{i,1},\dots, a_{i,n} g_{i,n}), $ $1\le i \le n$, are dependent for suitable $g_{i,j}\in\tG.$
 \end{enumerate}
\end{lemma}
\begin{proof}
(i) By definition of the Krasner map.

(ii) One of the terms in the right side of \eqref{Kd} must be $\zero.$

(iii) Also by the Krasner map.
\end{proof}

But the converse to (iii)  could fail, since different elements of $\tG$ might be used in the sums.

\subsection{Vector pairs, dependence and bases}$ $

Rather than defining arbitrary module pairs over a pair $(\mcA,\mcA_0)$,   we only use:

\begin{definition}\label{vs}
   A \textbf{vector pair} over a pair $(\mcA, \mathcal A _0)$  is a pair $  (\mcV,\mcV_0),$ where $\mathcal V :=\mathcal A ^{(J)}$ denotes the set of $J$-tuples over $\mathcal A$ with only finitely many nonzero coefficients,  and $\mcV_0 =\mcA_0^{(J)}$.   The $\tT$-bimodule operations are defined componentwise.

\end{definition}

\begin{definition}\label{dep1}
$ $ \begin{enumerate}
    \item A set of
  vectors $\{\mathbf v_i \in
\mcV : i \in I\}$ is $\mathcal V_0$-\textbf{dependent} (or \textbf{dependent} for short), if $\sum _{i\in I'} a_i
\mathbf v_i \in \mathcal V_0$ for some nonempty finite subset $I' \subseteq I$ and $a_i \in \tT$.

   \item  A $\mathcal V_0$-\textbf{base} (written \textbf{base} for short) is an independent set of   vectors  $\{\mathbf v_i \in
\mcV : i \in I\}$  such that for any $\mathbf v\in \mcV$ there are $a,a_i\in \tT$   with  $( \sum_{i\in I} a_i {\mathbf v}_i + a\mathbf{v})\in \mcV_0.$

   \item  A \textbf{tangible vector} is a nonzero vector all of whose
elements are in~$\tTz .$

\end{enumerate}
\end{definition}

 \begin{lemma}
      Any independent set can be enlarged to a base.
 \end{lemma}
\begin{proof}
    The union of a chain of independent sets is independent, so apply Zorn's Lemma to get a maximal independent set, and then any vector is dependent on it.
\end{proof}

\begin{definition}
The 
\textbf{row rank} of a matrix is the maximal number of
independent rows.

The notion of \textbf{column rank}  is
defined analogously.

The \textbf{submatrix
rank} of $ A $ (with respect to a given balance relation $\nabla$) is the largest size of a nonsingular square submatrix
 of $ A $, with respect to a given balance relation $\nabla$.
\end{definition}


\section{Dependence in terms of submatrix rank: Conditions \A{1} thru \A{6}} $ $

From now on we take $J = \{1,\dots,n\}$,    $\mcV = \mcA^{(n)}$, and $\mcV_0 = \mcA_0^{(n)}$. When the pair $(\mcA,\mcA_0)$ has a negation map, we take $\nabla= \nablaminus;$  more generally, when the pair $(\mcA,\mcA_0)$  satisfies \PropertyN, we take $\nabla = \nablaT,$ or $\nabla = \nabladag$ which coincide on $\tT$ when $(\mcA,\mcA_0)$ is also N-transitive, by \Cref{bal4}.
 ``Singular'' means $\nabla$-singular.

 The main subject for the remainder of this paper is to
explore the following conditions for pairs  $(\mcA, \mathcal A _0)$ satisfying \PropertyN, which all hold in linear algebra over a field.


\begin{itemize}\label{Cond}
\item \textbf{Condition \A{1}}:
The submatrix rank is less than or equal to the row rank and the
column rank.

  \item \textbf{Condition \A{2}}:  The submatrix rank is greater than or equal to the row rank and the
column rank.

\item   \textbf{Condition \A{3}}: If the row rank and the
column rank of an $n\times  n$ matrix $A$ is $n$, then $A$ is nonsingular.

\item   \textbf{Condition \A{4}}, cf.~\cite[Question~5.13]{BZ}: The rows of an $m\times n$ matrix are dependent if $m>n.$

\item   \textbf{Condition \A{5}}: For $m>n,$ either the rows of an $m\times n$ matrix $A$ are dependent or    $A$   has a singular $n\times n$ submatrix.

 \item  \textbf{Condition \A{6}}: Any base of $\mcA^{(n)}$ has at most $n$ vectors.
 \end{itemize}

 Conditions \A{1} and \A{2} were taken from  \cite[\S9.2]{Row21}.
  Matrices over supertropical semifields   were seen  in
\cite[Theorem~3.4]{IzhakianRowen2009TropicalRank} to satisfy
Conditions \A{1} and~\A{2}, also cf.~\cite{IzhakianRowen2008Matrices}, and
the motivation for this paper comes from an attempt to generalize
these results and related results from \cite{Pl,Ga,AGG1,AGG2}.

  Let us dispose of an easy case.

\begin{definition}\label{rdef} Rows $\mathbf v_1, \dots, \mathbf v_m$
have \textbf{rank defect} $k$ if there are $k$ columns, which we
denote as columns ${j_1}, \dots, {j_k}$, such that $a_{i,j_u} = \zero$
for all $1 \le i \le m$ and $1\le u \le k$.
\end{definition}

\begin{proposition}\label{ssing1} Suppose that $(\mcA,\mcA_0)$ is a ZSF metatangible pair, cf.~\Dref{ZSF0}.
 Every $m\times m$ submatrix of a tangible $m\times n$ matrix~$A$ is   $\zero$-singular if, for some $1 \le k<m$, $A$ has $k$ rows
having rank defect $n+1-k.$\end{proposition}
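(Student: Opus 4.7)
The plan is a straightforward pigeonhole count on tracks. Let $B$ be any $m\times m$ submatrix of $A$, obtained by selecting some $m$ columns of $A$. By hypothesis there exist $k$ rows of $A$ (call them the \emph{defective} rows) and $n+1-k$ columns of $A$ (call them the \emph{defective} columns) such that every entry of a defective row in a defective column is $\zero$. Since $A$ has only $n-(n+1-k)=k-1$ non-defective columns, at most $k-1$ of the $m$ columns chosen to build $B$ can be non-defective. In particular, $B$ has at most $k-1$ non-defective columns (and at least $m-k+1$ defective ones).

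Next I would examine an arbitrary track of $B$, that is, a product $\prod_{j=1}^{m} b_{\pi(j),j}$ for some $\pi\in S_m$, where $b_{i,j}$ denotes the $(i,j)$ entry of $B$. For such a track to be non-zero, each factor must be non-zero; in particular every defective row of $B$ must be matched by $\pi$ to a non-defective column of $B$, since otherwise the corresponding factor is $\zero$ and hence kills the whole product (as $\zero$ is multiplicatively absorbing). But $\pi$ is injective and we must inject the $k$ defective rows into the at most $k-1$ non-defective columns of $B$, which is impossible. So every track of $B$ contains at least one $\zero$ factor, and hence every track of $B$ equals $\zero$.

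Consequently $|B|_{+}$ and $|B|_{-}$ of \eqref{eq:tropicalDetsignsym} are each sums in which every summand is $\zero$, so both $|B|_{+}=\zero$ and $|B|_{-}=\zero$; the (doubled) determinant of $B$ is $\zero$ in whatever interpretation is being used. There is no real obstacle here: the argument is purely combinatorial, and its only subtle point concerns the role of the LZS hypothesis. The pigeonhole step produces tracks that are literally equal to $\zero$, so LZS is not actually needed to conclude the sum vanishes; it is invoked to fit the paper's standing philosophy, namely that a determinant being $\zero$ (rather than merely balanced or quasi-zero) should correspond to each track separately being $\zero$, which is exactly the content of $\mcA$ being zero-sum free on tangible summands.
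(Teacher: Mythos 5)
Your argument is correct, and it is exactly the combinatorial pigeonhole proof of the Frobenius--K\"onig direction that the paper itself invokes by citation (``this essentially is a theorem of Frobenius\dots since the proof is combinatoric, keeping track of the entries that are $\zero$''), so you are taking the same route, just written out in full. Your observation that every track is literally $\zero$ (so that the LZS hypothesis plays no role in this direction) is also accurate.
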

\begin{proof}
This essentially is a theorem of Frobenius~\cite{Fro}, as explained
in \cite[p.141]{Sch}, since the proof is combinatoric, keeping
track of the    entries that are $\zero$.
 \end{proof}

\subsection{Implications among the Conditions}

\begin{rem}\label{imps}$ $
  \begin{enumerate}
      \item   Condition \A{2} implies \A{3}.
       \item  Condition~\A{3} implies \A{4}, since adding on   $m-n$ columns of zeroes gives a singular matrix, whose rows must thus be dependent.
    \item   Condition \A{4} implies  \A{6}, since a base of   $m>n$ vectors would be dependent.
  \end{enumerate}
 \end{rem}

\begin{proposition}\label{firstc37a}  Condition \A{5} holds for every  triple $(\mcA, \mathcal A _0,(-))$  with $\mcA = \tT \cup \mcA_0$.
\end{proposition}
\begin{proof}
   Suppose $\bfv_1, \dots, \bfv_{n+1}$ are tangible vectors.  Let $A$ be the matrix of  $\bfv_1, \dots, \bfv_{n}$.
 First assume that the first entry of $\adj A\bfv_{n+1}$ is in $\mcA_0$. Then the matrix obtained by replacing the first row of $A$ by $\bfv_{n+1}$ is singular. Applying this to each entry  of $\adj A\bfv_{n+1}$ in turn,   we are done unless  $\bfv:= \adj A\bfv_{n+1}$ is tangible. But then
    $$\bfv_{n+1} \preceq A\adj A\bfv_{n+1} = A \bfv,$$
    a tangible linear combination of  the rows $\bfv_1,\dots, \bfv_n$ of $A,$ so the rows  are dependent.
\end{proof}


\Cref{firstc37a} will be improved in \Cref{firstc37b}, when we strengthen the notion of dependence.



%

Condition \A{3} is seen to hold in certain special cases in \Cref{theoalphaP}, and over pairs of ``tropical type'' (\Dref{circidem}), in \Cref{theoalphaQ} and \Cref{theoalphaR}.

Finally, Condition \A{4}, being weaker than Condition \A{2}, holds in many situations,  including hyperfield examples given in \S\ref{hy1}.

\subsection{Counterexamples to Conditions \A{2} and \A{3}}\label{neg}$ $

We  finally are ready for a detailed investigation of   dependence of vectors over pairs, relating to Conditions \A{1}, thru~\A{5}.
 Our
initial hope was that, in analogy to the supertropical
situation~\cite{IzhakianRowen2009TropicalRank}, Condition~\A{2} would
hold in general for   pairs satisfying \PropertyN\ (and then the column
rank would equal the row rank).
 However, the second author
observed that a (nonsquare) counterexample to Condition~\A{2} already
had been found in~\cite{AGG1}, even when the underlying pair  is
  $\mcA_0$-bipotent. The essence of the example exists in the ``sign
semiring'' of Example~\ref{signsem}.

\begin{example}\label{firstc}
Recall the sign semiring pair  $(\mcA,\mcA_0)= ( \{ 1 , 0, -1, \infty\},\{ \infty\})$, and write $+$
for $+1$ and $-$ for $-1$. The matrix
\begin{equation}\label{impma1} \left(\begin{matrix}
                + & + & - & +\\
                + & - & + & +\\ - & + &  + & +
               \end{matrix}\right) \end{equation}
has row rank 3. Indeed, call the rows $\mathbf v_1,\mathbf v_2,\mathbf
v_3$, and suppose that $\mathbf v_1 +\a _2 \mathbf v_2 +\a _3 \mathbf v_3
\in \mathcal A _0,$ with $\a_i \in \tTz$ not all $\zero$. It is easy to see that
both $\a_2, \a _3  \ne 0$. From the first column we cannot have $\a
_2 = +, \ \a _3 = -.$ From the second column we cannot have $\a _2 =
-, \ \a _3 = +.$ From the third column we cannot have $\a _2 = -, \
\a _3 = -.$
 From the fourth column
we cannot have $\a _2 = +, \ \a _3 = +.$ Thus we have eliminated all
possibilities.

On the other hand, each $3\times 3$ minor is singular. Indeed, up to symmetry there are two
options:

\begin{equation}\label{comp}\begin{cases}\left|\(\begin{matrix}
                + & + & -  \\
                + & - & + \\ - & + &  +
               \end{matrix}\)\right| \quad  \text{expanded along the first row is} \;  (-)1(-)1  + \infty= \infty  ,\\ \\
             \left|\(\begin{matrix}
                + & + & +  \\
                + & - & + \\ - & + &  +
               \end{matrix}\)\right|     \quad  \text{expanded along the first row  is} \;(-)1(-)1  (-) \infty= \infty  .\end{cases} \end{equation}  \end{example}
Either way the $(-)$-determinant is $ \infty.$


 \subsubsection{Almost regular pairs}$ $

We  formulate this example quite generally for certain semiring pairs of the second kind.

\begin{definition}\label{strsec}
A  triple $(\mathcal A, \mcA_0,(-))$  is of the \textbf{tangible second
kind} if $a \nablaT a' $ for $a,a'\in \tT$ implies $ a+a'\notin \mcA_0$.
\end{definition}

\begin{lemma}\label{hyps}$ $
\begin{enumerate}
    \item Any  pair  of the tangible second
kind is of the  second
kind.
\item Conversely, any uniquely negated metatangible   pair  of the  second
kind is of the tangible second
kind.
\end{enumerate}
\end{lemma}
\begin{proof}
(i)  $a \nablaT a $, so  $ a+a\notin \mcA_0$.

(ii) If $a \nablaT a'$ for $a,a'\in \tT$ then $a(-)a' \in \mcA_0$ by \Cref{theoalphaD},
so   $a' =a,$ implying $a+a'\notin \mcA_0.$
\end{proof}

\begin{definition}\label{alr}
A pair $(\mathcal A, \mcA_0)$ satisfying \PropertyN\ is \textbf{almost regular} if $ a_1 + a_2 + a_3 \in \mcA_0$  and $ a_1 ^\dag+ a_2 + a_3 \in \mcA_0$ imply that  $a_2 + a_3 \in \mcA_0$.
\end{definition}.

\begin{lemma}  Any  $\mcA_0$-bipotent pair of the tangible second
kind  is almost regular.
\end{lemma}
\begin{proof}$ $
 Suppose that $a_2 + a_3 = a_2$. Then $a_1 +   a_2 = a_1 + a_2 + a_3 \in \mcA_0, $ and likewise $a_1^\dag +a_2\in \mcA_0$,
so $a_1\nablaT a_1^\dag,$ contrary to hypothesis.
\end{proof}

\begin{example}
    The sign semiring pair is $\mcA_0$-bipotent of the tangible second kind, and thus almost regular.
\end{example}

\begin{lemma}\label{rowrank3} Over any almost regular semiring pair $(\mcA,\mcA_0)$  of the tangible second kind, the matrix
\eqref{impma1} of
  Example~\ref{firstc} (writing $\one$ for $+$ and $\one^\dag$ for $-$)  has row rank~3.
\end{lemma}
\begin{proof} Otherwise there would be elements $a _i \in \tT$ such that
$\sum _{i=1}^3 a _i \mathbf v_i \in \mathcal A_0.$  The first and
fourth columns show that $$ a _1 + a _2 +a_3 \in \mathcal
A^\circ,\qquad  a _1 + a _2 +   a_3\one^\dag \in \mathcal A^\circ;$$ hence, by
hypothesis, $a_1  + a _2 \in \mathcal A_0 $. Likewise the  second and fourth columns
show that $a_1  + a _3 \in \mathcal A_0 $, so $a_2 \nablaT a_3.$ But  the  first and second  columns
show that $a_1  + a _2 \in \mathcal A_0, $
contrary to $(\mcA,\mcA_0)$ being of the tangible second kind.%
%
%
\end{proof}

\begin{theoalpha}\label{theoalphaI} Condition \A{2}  fails for the matrix   \eqref{impma1}, writing $\one$ for $+$ and $\one^\dag$ for $-$, taken
over any almost regular pair of the tangible second kind.
\end{theoalpha}
\begin{proof} As in  the computation in  \eqref{comp},  $
A$ is singular, so we conclude using~Lemma~\ref{rowrank3}.
\end{proof}%

\begin{corollary}\label{theoalphaI1} Condition \A{2}  fails for an $n\times n$
matrix, $n \ge 4$, over any almost regular pair of the tangible second kind.
\end{corollary}

 \begin{proof} We start with the $3\times 4$
 matrix $A$ of \eqref{impma1}, and
 form a $4\times 4$ matrix $B$ by taking its fourth row to be
the  repetition of the first row. Clearly, the row rank and
submatrix rank of $B$ are the same as for $A$. For higher $n$, we
just tack  an identity matrix of size $n-4$ onto $B$, along the
diagonal.
 \end{proof}

 There even is a triple which is a counterexample for Condition \A{3}.

 \begin{example}\label{impmA2}
 Suppose $(\mcA,\mcA_0)$ is a  uniquely negated pair of the second kind  (such as a doubled  pair). Then the matrix
 \begin{equation}\left(\begin{matrix}
                \one & \zero & \zero  & \one  \\
                 \zero & \one  & \one & \zero \\  \one & \zero  &  \one &\zero    \\
                \zero  & (-)\one&  \zero  & \one
               \end{matrix}\right)
 \end{equation}
 has   $(-)$-determinant $e$
 so is singular. But if the rows $\mathbf v_i$ are dependent, say $\sum a_i \mathbf v_i \in \mcA_0,$ with $a_4 = \one,$
 then the second column says $a_2=\one$
and the third column now says   $a_3 = (-)\one.$ But the first column now says $a_1 = \one$ and the fourth column says $ \one +  \one= e,$ contradicting the second kind.

More generally, suppose that the only nonzero elements of $A$ lie on two disjoint tracks of even permutations  $\pi$  and $\sigma,$ where $a_\pi=(-)a_\sigma.$ Then $\Det{A} = a(-)a,$ so $A$ is singular, whereas the rows are not dependent.
 \end{example}
 \subsubsection{A counterexample to Condition \A{2}  over  quotient hyperfields}$ $

 \begin{example}\label{countq}
 Condition  \A{2} can fail for quotient hyperfields of the second kind,  when $m= n\ge 3$.

 Indeed, take the rational field $F$ in $9$ indeterminates $\la_{i,j}$,  $1 \le i,j \le 3$,  and $6$ indeterminates $\mu_{\pi}$ for $\pi \in S_3$, over~$\Q$, and the generic $3\times 3$ matrix $A = (\la_{i,j}),$
 letting $\tG$ be the subgroup generated by the $\mu_\pi.$
 We impose the  relation
 $\sum_\pi a_\pi \mu_\pi = 0,$ thus giving us a matrix $\bar A$ of $(-)$-determinant $\bar \zero$.  The projective space of solutions has dimension 5 .  But the projective solution space for independence of the rows has dimension 3, so we have solutions where the determinant over the Krasner residue field is 0, whereas the rows are independent.
 \end{example}

One might imagine that \Eref{impmA2} might provide a counterexample to Condition \A{4}. However, this is not so.

 \begin{example}\label{impmA2a}
 Suppose $(\mcA,\mcA_0)$ is a  uniquely negated pair of the second kind  (such as a doubled  pair). Take the matrix
 \begin{equation}\label{comp1}\left(\begin{matrix}
                \one & \zero & \zero  & \one  \\
                 \zero & \one  & \one & \zero \\  \one & \zero  &  \one &\zero    \\
                \zero  & (-)\one&  \zero  & \one \\ c_1 & c_2 & c_3 & c_4
               \end{matrix}\right),
 \end{equation} with $c_1>c_2>c_3>c_4.$  Suppose the rows $\mathbf v_i$ are dependent, say $\sum_{i=1}^4 a_i \mathbf v_i = a \mathbf v_5.$ By \Eref{impmA2}, $a\ne 0.$ Replacing $c_i$ by $ac_i,$ we may assume that   $a = \one.$ We claim that there is a unique solution. The fourth column says $a_1=c_4$ or $a_4=c_4,$ or $a_1 = (-)a_4$ with $a_1>c_4.$

 \begin{enumerate}
         \item If  $a_1= c_4$ then the first column says $a_3 = c_1.$ Hence the third column says $a_2=(-)c_1$ and  the second column says $a_4=c_1$,
contradicting the fourth column.

         \item If  $a_4= c_4$
 then the second column says $a_2=c_2$
and the third column now says   $a_3 = (-)c_2.$ But the first column now says $a_1 = c_1$, contradicting the fourth column.

  \item We are left with  $a_1 = (-)a_4$ with $a_1>c_4.$
  \begin{itemize}
      \item If $a_1 <c_1$ then  the first column says $a_3 = c_1.$ Now the second column says $a_2=c_1$, contradicting the third column.

            \item If $a_1 =c_1$ then  the first column says $a_3 = (-)c_1.$ Then the third column   says  $a_2=c_1$.   Now the second column says $a_4=c_1$, contradicting the fourth column.

             \item If $a_1 =(-)c_1$ then  the first column says $a_3 = (\pm)c_1.$ If $a_3 =  c_1,$ then we have a contradiction as before. However, if $a_3 = (-)c_1,$ then the third column   says  $a_2=c_1$.   Now the second column says $a_4=c_1$.

  \end{itemize}
 \end{enumerate}

 \end{example}

\subsection{Condition \A{1}}  $ $

Having battered Condition \A{2}, we turn to Condition \A{1}, which is much more agreeable.

\begin{rem}
Condition  \A{1} is equivalent to the assertion:\medskip

    If $m=n$ and the vectors $v_1,\dots,v_m$ are  dependent,
    then the matrix $A$ is singular.
\medskip

    (Indeed, one cuts down to $m\times m$ submatrices of $A$.)
  Thus, Condition  \A{1} is related to the conclusion of
\cite[Theorem~4.18]{AGG2}, as will be explained in \S\ref{Cr}.
\end{rem}

 We recall:

\begin{lemma}[\cite{BZ}]
Condition \A{1}     holds over   hyperfields of Krasner type.
\end{lemma}
\begin{proof}
Taking  vectors $(a_{i,1} g_{i,1},\dots, a_{i,n} g_{i,n}), $ $1\le i \le m$ which are dependent,   their matrix has determinant $\zero.$

Thus the rows of $\bar A$ being dependent implies that its $(-)$-determinant  in $R/\tG$ contains $\bar \zero$.
\end{proof}

\subsubsection{A reformulation of Condition \A{1}}$ $

Applying the definition of $\nabla$ componentwise to the vector pair $(\mathcal V, \mcV_0)$, we obtain the  balance relation on vectors, i.e., $\mathbf{v} \, \nabla  \,\mathbf{v'}$ if    $ {v_i} \nabla \, {v'_i}$, for all $i$.
We  have the following reformulation of~Condition~\A{1} for triples:

\begin{lemma}\label{A1alt}
Consider the following properties over a triple $(\mcA,\mcA_0,(-))$,
 when $\nabla=\nablaminus$:
\begin{enumerate}
    \item\label{A1alt1}
For all $n\geq 1$, and $n\times n$ square matrices $A$ over ${\mathcal A}$, we have
\begin{equation} A \mathbf x\, \nabla \,\zero
\; \text{and}\;  \mathbf x\in \tTz^{(n)}\setminus\{\zero\} \implies
{|A|} \, \nabla \,\zero\enspace .
\label{nulldet}
\end{equation}
\item \label{A1alt2}Denote by $\one_{(n)}$ the $n$-dimensional column vector with all entries equal to $\one$, we have
for all $n\geq 1$, and $n\times n$ square matrices $A$ over ${\mathcal A}$, we have
\begin{equation} A \one_{(n)}\, \nabla \,\zero
 \implies
{|A|} \, \nabla \,\zero\enspace .
\label{nulldet1}
\end{equation}
\item \label{A1alt3}For all $n\geq 1$, $n\times n$ square matrices $A$ over ${\mathcal A}$, and vectors $v\in {\mathcal A}^{(n)}$, we have
\begin{equation}A \mathbf x\, \nabla \,\mathbf v
\; \text{and}\;  \mathbf x\in \tTz^{(n)} \implies
{|A|} \mathbf x\, \nabla \,{\adj A\mathbf v }\enspace .\label{cramer1}
\end{equation}
\end{enumerate}
Then, Condition \A{1} is equivalent to each of the properties
{\rm \ref{A1alt1}, \ref{A1alt2}} and {\rm \ref{A1alt3}}.
\end{lemma}
\begin{proof}
Condition \A{1} implies \ref{A1alt1}:
If $A \mathbf x\, \nabla \,\zero$, with $\mathbf x\in \tTz^{(n)}\setminus\{\zero\}$,
then the columns of $A$ are dependent,
  implying that the rank of $A$ is at most $n-1$, so by Condition \A{1}, this implies that the submatrix rank of $A$ is at most $n-1$, so $A$ is singular, that is $|A|\, \nabla \,\zero$.

\ref{A1alt1} implies Condition \A{1}:
 Let $r$ be the column rank of an $n\times m$ matrix $A$.  If $r=m$, then the submatrix rank of $A$ is at most
 $m=r$. Otherwise, any collection of
 $r+1\leq m$
 columns of $A$ is dependent. So the columns of any $(r+1)\times (r+1)$ submatrix $B$ of $A$ are dependent, that is there exists $x\in\tTz^{(r+1)}\setminus \{\zero\}$ such that $B\mathbf x\, \nabla \,\zero$.
 Hence $B$ is singular by (i).
 Since this holds for any $(r+1)\times (r+1)$ submatrix $B$ of~$A$, this shows that the submatrix rank of $A$ is less than $r+1,$ so is at most $r$.

 \ref{A1alt1} implies \ref{A1alt2}: take $\mathbf x=\one_{(n)}$.

\ref{A1alt2} implies \ref{A1alt1}: Let $\mathbf x\in \tTz^{(n)}\setminus\{\zero\}$ such that
$A \mathbf x\, \nabla \,\zero$.
Consider the square matrix $B$
with entries $B_{ij}=A_{ij} x_j$,
then we have $B\one_{(n)} \nabla \,\zero$.
This implies by \ref{A1alt2} that $x_1\cdots x_n |A|=|B|\nabla \,\zero$. If all entries of $x$ are in $\tT$ then, by \Cref{Note1}(ii) this implies that $|A|\nabla \,\zero$.
Otherwise, assume that the first entries of $x$, from $1$ to $m<n$ are in $\tT$ and the last ones are~$\zero$. Then the last $n-m$ columns of $B$ are~$\zero$. Let $C$ be any   $m\times m$ submatrix of $B$ obtained by taking the first $m$ columns of $B$
and $m$ arbitrary rows of $B$. Then
$C\one_{(m)}\nabla \,\zero$.
Applying \ref{A1alt2} to $C$, we deduce that
$|C|\nabla \,\zero$. Again,  \Cref{Note1}(ii)
  implies  that $|D|\nabla \,\zero$
for any $m\times m$ submatrix of $A$ obtained by taking the first $m$ columns of $A$
and $m$ arbitrary rows of $A$. Then by the block expansion, $|A|\nabla \,\zero$.

\ref{A1alt1} implies \ref{A1alt3}:
Suppose $A \mathbf x\, \nabla \,\mathbf v$. For each $j\in [n]$, consider  the matrix $B$ such that
$B_{ij'}=A_{ij'}$ for~$j'\neq j$ and $B_{ij}=A_{ij} x_j (-) v_i$,
and the vector $\mathbf x'$ with $x'_{j}=\one$ and $x'_{j'}=x_{j'}$
for $j'\neq j$; then
$B \mathbf x'\, \nabla \,\zero$,
so by \eqref{nulldet1} we get $|B|\, \nabla \,\zero$,
which implies
$$|A|x_j (-) ({\adj A} \mathbf v)_j\, \nabla\, \zero.$$
Since this holds for all $j$, we deduce that
$|A|\mathbf x \, \nabla \, {\adj A} \mathbf v$.

\ref{A1alt3} implies \ref{A1alt1}: take $v=\zero$ and using that one entry of $\mathbf x$ is not $\zero$, with
 \Cref{Note1}(ii). 
\end{proof}

Some of the above equivalences can be shown using only \PropertyN.

\begin{lemma}\label{nab-A1alt}
Assume that $(\mcA,\mcA_0)$ satisfies \PropertyN, and
take $\nabla = \nabladag$. Then
$b\nabla \zero$ is equivalent to $b\in\mcA_0$, and
we have the following properties:
\begin{enumerate}
    \item\label{nab-A1alt1}
For all $n\geq 1$, and $n\times n$ square matrices $A$ over ${\mathcal A}$, we have
\begin{equation} A \mathbf x\, \nabla \,\zero
\; \text{and}\;  \mathbf x\in \tTz^{(n)}\setminus\{\zero\} \implies
{|A|_+} \, \nabla \, {|A|_-}\enspace .
\label{nab-nulldet}
\end{equation}
\item \label{nab-A1alt2}
For all $n\geq 1$, and $n\times n$ square matrices $A$ over ${\mathcal A}$, we have
\begin{equation} A \one_{(n)}\, \nabla \,\zero
 \implies
{|A|_+} \, \nabla \, {|A|_-}\enspace .
\label{nab-nulldet1}
\end{equation}
\end{enumerate}
Then, Condition \A{1} is equivalent to each of property
{\rm \ref{nab-A1alt1}} and {\rm \ref{nab-A1alt2}}.
\end{lemma}
 \begin{proof}
The conclusions of {\rm \ref{nab-A1alt1}} and {\rm \ref{nab-A1alt2}} mean that
$A$ is singular with respect to $\nabla$, then the proof of the equivalence of
 Condition \A{1} and {\rm \ref{nab-A1alt1}} is the same as above. For the
proof of the equivalence of Condition \A{1} and {\rm \ref{nab-A1alt2}},
one remarks that $ab\nabla ab'$ with $b,b'\in \mcA$ and $a\in \tT$ is
equivalent to $b\nabla b'$ by using \Cref{Note1}(ii),
which solves the case when all the entries of $x$ are in $\tT$, and then
for the general case, we need to develop the positive and negative determinants by blocks.
\end{proof}
\begin{remark}
If we take $\nablaT$ instead of $\nabladag$ in the above statement, then
Condition \A{1} is equivalent to {\rm \ref{nab-A1alt1}} where the condition
$A \mathbf x\, \nabla \,\zero$ is replaced by $A \mathbf x\, \in \mcV_0$.
Moreover, for the equivalence with  {\rm \ref{nab-A1alt2}}, one needs   $\nablaT$ to be cancellative,
which holds for instance when $\tT$ is a group.
\end{remark}


  \subsubsection{Condition \A{1} for matrices   over a metatangible, uniquely negated  pair}$ $

Here is a step towards obtaining Condition \A{1}.
\begin{itemize}
    \item
Let us call a tangible column vector {\it satisfactory} if one of the following holds:  \begin{enumerate}
    \item Precisely one entry is some $a\in \tT,$ one entry is $(-)a,$ and all other entries are $\zero.$
    \item $(\mcA,\mcA_0)$ is of first kind of $\mcA_0$-{characteristic} $k$,    precisely $2k+1$   entries are $a\in \tT,$ and all other entries are $\zero.$
\end{enumerate}

 \item More generally, let us call a column vector {\it satisfactory} if either
   \begin{enumerate}
     \renewcommand{\theenumi}{(\roman{enumi}$'$)}
\renewcommand{\labelenumi}{\theenumi}
\item (i) above holds,
\item Precisely one entry is some $a^\circ$ with $a\in \tT,$ and all other entries are $\zero,$
    \item $(\mcA,\mcA_0)$ is of first kind of $\mcA_0$-{characteristic} $k$, and all the entries of the vector are either~$\zero$ or of the form $m_ia$, and  $\sum_i m_i =(2k+1)$.
\end{enumerate}
 Recall that $k$ depends uniquely on $\mcA.$ When $\mcA$ is \efinal\  then $k=1.$
 \item
    A satisfactory column is {\it  normalized} if $a=1.$
  \item A matrix $A$ is (normalized)  {\it satisfactory} if each column is (normalized)  {satisfactory}.
\end{itemize}

 \begin{theoalpha}\label{A1partA} Suppose $(\mathcal A,\mcA_0)$ is a metatangible, uniquely negated  pair.
 \begin{enumerate}
 \item Any (tangible) column vector whose entries have a sum belonging to $\mcA_0$ is the sum of (tangible) satisfactory column vectors, or equivalently the tangible linear combination of normalized (tangible) satisfactory column vectors.
   \item The determinant of any (tangible) matrix all of whose  column sums are in $\mcA_0$ is a tangible linear combination of determinants of normalized satisfactory matrices.
 \end{enumerate}
 \end{theoalpha}

 \begin{proof} (i) Consider  a tangible vector $\bfv=(v_1,\ldots , v_n)^T\in \tT^{(n)}$
   such that 
 $\sum_{i=1}^n v_{i} \in \mcA_0$.
We call the {\it breadth} of $\bfv$  
to be the maximal number of $v_{i}$ whose sum is tangible.
If~$\bfv$ has breadth $n-1$, then by \Cref{met1} (ii,b),
   there is $i^*$ such that $\sum_{i\ne i^*} v_{i} = (-)v_{i^*}$.
Then, $\bfv=\sum_{i\ne i^*} \bfv_i$, where $\bfv_i$ is the column vector
with an $i$th-entry $v_{i}$ and an $i^*$-entry $(-) v_{i}$ and all other entries
equal to $\zero$.

So we may assume $v$ 
is not satisfactory and has breadth $\ne n-1.$
Applying \lemref{met1}(ii,a), we may
 normalize $v$ 
so that 
$a$, as defined above, is $(-)\one$.
Rearranging, we may assume that 
 $\bfv =(\one,\dots,\one, v_{i'+1},\dots, v_{n})$,  where $v_{i'+1}+\dots+ v_{n}=(-)\one$
and either $\one+\one=\one$, or  $\one+\one=\one^\circ$
 with $\mathbf i'+{\mathbf 1}\in \mcA_0$.
We get easily that
$\bfv = \sum_{l=i'+1}^{n} (-)v_{l}{\bfv}_{l}$, where
$\bfv_l=(\one,\dots,\one, \dots, (-) \one,\dots ,\zero )$,
where the   $(-)\one$ is in position $l$.

If $\one+\one=\one$, we get that
$\bfv_l = \sum_{i=1}^{i'}{\mathbf u}_{i,l}$, where
$ {\mathbf u}_{i,l}$ is the vector having $i$th entry $\one$,
$l$th entry $(-)\one$, and all other entries   $\zero$.
So $\bfv$ is the sum of satisfactory vectors of type (i).

Assume now that  $\one+\one=\one^\circ$ and so $\mathbf {i'}+1\in \mcA_0$.
If $i'+1$ is even, then we can write all vectors $\bfv_l$ as the sum of
satisfactory vectors of type (i) with only two entries equal to $\one$.

If $i'+1$ is odd, and the $\mcA_0$-characteristic of $\mcA$ is $k$,
then $i'+1=2k+1+2l$, and then $\bfv_l$ is the sum of $l$
satisfactory vectors of type (i) with two entries equal to $\one$,
and one satisfactory vector of type (ii).

For general entries $v_{i} \in \mcA$ we write $v_{i}= m_{i}u_{i}$ for $u_{i}\in \tT$ and apply the previous result to a column vector $\bfv'$ of larger dimension
$\sum_{i} m_i$ and repeated entries $u_{i}$, and then project each summand
of the decomposition of $\bfv'$
into a vector of dimension $n$, which will be satisfactory.

(ii) Apply (i) to each column and use the linearity of determinant with respect to each column.
 \end{proof}

We shall need the following combinatorial results.
\begin{lemma}\label{lem-combin0}
Let $A$ be an $n\times n$ matrix with nonnegative integer entries, where for each column the sum of the entries  is  equal to $2$.
Then $\Per A$   is even.
\end{lemma}
\begin{proof}
   In classical algebra the sum of the columns is~$0$ modulo 2, and so the classical determinant also is~$0$ modulo 2, implying the permanent is even.
\end{proof}
\begin{lemma}\label{lem-combin1}
Let $A= (a_{ij})$ be an $n \times n$ matrix with nonnegative integer entries, where for each column the sum of the entries either is  equal to $2$, or
is at least  $L \in \Net$.
Then, either $\Per A\geq L$ or $\Per A$ is even.
\end{lemma}
\begin{proof}
     We call a column {\it $L$-ample} if   its sum
is at least equal to $L $. Let $\mathcal{S}_{m,L}$ be the set of $n \times n$  matrices with $m$ columns that are $L$-ample and $n-m$ columns   for which  the sum of the entries is   $2$. In other words, $A\in \mathcal{S}_{m,L}.$ We proceed by  double induction, on   $m$   and on $L.$  The case $m=0$ follows from \Cref{lem-combin0}. The case $L=0$ or $1$ is obvious. So we assume that $L\ge 2$ and $m\ge 1$.
Thus we may assume that the first column is $L$-ample.

     Permuting rows, we may assume that all the entries on the main diagonal are nonzero.  If $a_{1,1} = L$ then we are done, so we may assume that   $a_{1,1}<L,$ and thus $a_{i,1} \ge 1$ for some $i\ne 1.$
     Let $A'$ be the matrix where we replace the first column $\bfv_1$ by  the vector with $1$ in the $1$ and $i$ positions and $0$ elsewhere, and let $A''$ be the matrix where we replace   $\bfv_1$ by  the vector with $a_{1,1}-1$ in     position $1$, $a_{i,1}-1$ in the $i$ position,  and  $a_{i',1}$ elsewhere.
So $A' \in \mathcal{S}_{m-1,L} $,
and $A''\in \mathcal{S}_{m,L-2}$.

By induction on $m$, $\Per A'$ is even or $\ge L$ and is clearly nonzero, and by induction on $L$, $\Per A''$ either is even or is $\ge L-2.$
Hence $\Per A = \Per A'+\Per A''$ either  is even or is $\ge 2 + (L-2)   = L.$
\end{proof}

 \begin{example}
     $\left(\begin{matrix} \one & \one & \zero \\
\one & \zero & \one   \\ \one & \one
 &\one
\end{matrix}\right)$
  has permanent 3.
     $\left(\begin{matrix} \one & \one & \zero \\
\one  & \one & L \\ \zero & \zero
 &\one
\end{matrix}\right)$ has permanent $2$.
 \end{example}

 \begin{theoalpha}[Condition \A{1} via metatangibility]\label{A1partB} Condition \A{1} holds  if   $(\mathcal A,\mcA_0)$ is a uniquely negated metatangible pair.
 \end{theoalpha}
 \begin{proof}
 By \Cref{A1alt}, Condition \A{1}
 is equivalent to \Cref{A1alt}\ref{A1alt2}.
 So let $A$ be a $n\times n$ matrix with entries in
 $\mcA$ such that $A\one_{(n)}\nablaminus \zero$.
 Then, the transpose of $A$ has column sums in~$\mcA_0$.
Applying  \Cref{A1partA} to the transpose of $A$, we deduce that the determinant of $A$ is a  tangible linear combination of determinants of  normalized satisfactory matrices.
 It   thus suffices to show that the determinant of
 any normalized satisfactory matrix is in $\mcA_0$.


 Let $A$ be a $n\times n$ normalized satisfactory matrix.
 Assume first that $\mcA$ is of the second kind.
 Then, all entries of $A$ are $\zero$, $\one$, $(-)\one$ or $\one^\circ$ and so all tracks are equal to $\zero$, $\one$, $(-)\one$ or $\one^\circ$. Since the sum in each column is in $\mcA_0,$ then by substituting in the matrix $A$, $\zero$ and $\one^\circ$ by $0$, $\one$ by $1$ and $(-)\one$ by $-1$, we obtain a matrix $B$
 in classical algebra  whose column sums are all equal to~$0$.
 So the classical determinant of $B$ is~$0$, implying that the number $m$ of tracks equal to $1$ is the same as the number of tracks equal to $-1$ and the others are $0$.
 Then $A$ has $m$ of tracks equal to $\one$, the same for $(-)\one$, and the other tracks are either $\one^\circ$ or $\zero$.  So
   $|A|= m' e\in  \mcA_0$ with $m'\ge m$.

 Assume now that $(\mcA,\mcA_0)$ is of the first kind, i.e., $(-)\one = \one,$ and let $k$ be the $\mcA_0$-characteristic of~$\mcA$.
 Then the entries of $A$ are such that $a_{ij}= b_{ij} \one$ where $B=(b_{ij})$ is a matrix with nonnegative entries
 satisfying the assumptions of \Cref{lem-combin1}
 with $L=2k+1$.
Since $|A|=(\perm B) \one$ and
 $\perm B\geq 2k+1$ or is even.
By \Cref{lem-combin1},
 using \Cref{A0char}, we get that $|A|\in \mcA_0$.
     \end{proof}

 \subsubsection{Condition \A{1} via a modulus}$ $

What can we say about Condition \A{1} without unique negation?

\begin{lemma} Suppose $(\mcA,\mcA_0)$ satisfies \PropertyN.
    \label{moduu} $ $\begin{enumerate}

  \item \label{moduu-1} If $a +a_i \in \mcA_0$ for $ a,a_i\in \tT,$  $1\le i \le m$, then $a+ \sum _{i=1}^m a_i = a  +ma^\dag$, for any $m\ge 1$ (independently of the choice of $\one^\dag$).

 \item \label{moduu-2}     $a'e+a^\dag  = a'e^+$ for each $a,a'\in\tT$ such that $a'+a \in \mcA_0$.

   \item \label{moduu-3} If in (i), $(\mcA,\mcA_0)$ is of the first kind, then $ a+\sum _{i=1}^m a_i =  (m+1) a$ for any $m\ge 1.$
    \end{enumerate}
\end{lemma}
 \begin{proof}
\ref{moduu-1} For $m=1$ this is by assumption. For $m=2,$   $a + a_1 +a_2 = (a+a^\dag)+a_2 = (a+a_2) +a^\dag = a+ a^\dag+a^\dag.$ By induction on $m$; assume $a+ \sum _{i=1}^{m-1}a_i= a + (m-1) a^\dag,$
then $$a+ \sum _{i=1}^{m} a_i =  a + (m-1)a^\dag + a_m = (a+a_m) +  (m-1)a^\dag = (a+a^\dag ) +  (m-1)a^\dag  = a  +ma^\dag .$$


\ref{moduu-2} $a'+a \in \mcA_0$ and \PropertyN\ imply $a'+a=a'e=ae$, so
$a'e+a^\dag = a'+{a'}^\dag  +a^\dag  = a'+a'e ^\dag  = a'+a'e = a'e^+.$

\ref{moduu-3} Special case of \ref{moduu-1}.
 \end{proof}

 \begin{example}
     Suppose $(\mcA,\mcA_0)$ satisfies \PropertyN\ and $\one + a_1 = \one + a_2 = e.$ Then the rows of the matrix $\left(\begin{matrix} \one & a_1   \\ a_2 & \one \end{matrix}\right)$ are dependent, but $A$ could be  nonsingular. For example, we could take the semiring $\mcA =\Net [\lambda_1, \lambda_2, \lambda_3]$ and $\mcA_0 = \mcA(\lambda_1+1) +\mcA(\lambda_2 +1)+\mcA(\lambda_3+1).$ Then we could take $\one^\dag =\lambda_1, $ $a_1 = \lambda_2,$ and $a_2=\lambda_3$.
 \end{example}

 To avoid such an example, we need   N-transitivity, defined in \Cref{Ntran}.

\begin{lemma}\label{dom2} Suppose $(\mcA,\mcA_0)$ satisfies \PropertyN\ and N-transitivity and let  $a_i\in\tT$ for all $i\geq 1$.
$ $ \begin{enumerate}
  \item  \label{dom2-2} Assume each $a_i$ is a quasi-negative of $\one^\dag.$ Then $a_1    \dots a_{m} +\one ^\dag=e,$   for all $m\in \Net.$

    \item \label{dom2-1} Assume each $a_i$ is a quasi-negative of $\one.$ Then
  \begin{enumerate}
     \item  \label{dom2-1a} $a_1    \dots a_{2m} +\one^\dag =e,$ for all $m\in \Net$;
   \item  \label{dom2-1b} $a_1    \dots a_{2m-1} +\one =e,$  for all $m\in \Net.$
      \end{enumerate}
\end{enumerate}
\end{lemma}
\begin{proof}
  \ref{dom2-2} By \PropertyN, $a_i+\one^\dag=e$ for all $i\geq 1$,
hence the property is true for $m=1$.
This is also true for $m=2$ since
    $ a_1a_2+ a_2^\dag  = (a_1+ \one^\dag)a_2 \in \mcA_0,$
$a_2^\dag+a_2\in \mcA_0$ and $a_2+\one^\dag\in \mcA_0$ with
$a_1a_2,a_2^\dag, a_2,\one^\dag\in \tT$, imply
$ a_1a_2+\one^\dag\in \mcA_0$ by N-transitivity, and then
$ a_1a_2+\one^\dag=e$ by \PropertyN.
Then, by induction, if the property holds for $m$, applying the property
for $m=2$ to the tangible elements $a_1\cdots a_m$ and $a_{m+1}$,
we deduce the property for $m+1$.

  \ref{dom2-1} By  \PropertyN, we have $a_i+\one = e$ for all $i\geq 1$.

\ref{dom2-1a} Assume first $m=1.$
We have $a_1+a_1a_2 = a_1(\one + a_2)\in \mcA_0$, and since
$a_1+\one \in  \mcA_0$, and $\one+\one^\dag\in \mcA_0$,
with $a_1a_2,a_1,\one\in \tT$, we deduce that $a_1a_2 + \one^\dag \in \mcA_0$
by N-transitivity. Then,  $a_1a_2 + \one^\dag=\one+\one^\dag=e$, by \PropertyN.

Applying this result to any pair of elements $a_{2i+1},a_{2i+2}$,
we deduce that $a_{2i+1}a_{2i+2}$ is a quasi-negative of $\one^\dag$ for all $i$.
We then obtain the result by applying \ref{dom2-2}.

\ref{dom2-1b} This is true for $m=1$.
Now assume $m\geq 2$. By \ref{dom2-1a}, we have
$a_1    \dots a_{2m-2 } +\one^\dag=e\in \mcA_0$.
Then, $a_1    \dots a_{2m-1 } + a_{2m -1}\one^\dag  =a_{2m-1}(a_1    \dots a_{2m-2 } + \one^\dag)\in \mcA_0$.
We also have $a_{2m -1}\one^\dag+\one^\dag\in\mcA_0$ and $\one^\dag+\one \in \mcA_0$, with  $a_1    \dots a_{2m-1 }, a_{2m -1}\one^\dag, \one^\dag, \one \in \tT$.
By N-transitivity we deduce that $a_1    \dots a_{2m-1 }+\one \in \mcA_0$,
and by Property N, $a_1    \dots a_{2m-1 }+\one=e$.
\end{proof}

At times we can   use  a modulus to compensate for lacking unique negation.

\begin{definition}  Suppose the  semiring pair $(\mcA,\mcA_0)$  is equipped with a modulus $\mu$.
\begin{enumerate}
\item      An element $b\in \mcA$ \textbf{dominates} $b'$ if     $\mu(b) \ge \mu(b').$
\item      An   element $b$ is  \textbf{dominant} in a set $S$  if     $\mu(b) \ge \mu(b')$ for all $b'\in S.$

\item (See \cite[\S3.2]{IR}.)
We say that a
track $a _\pi$ of a  matrix  $A$ is
\textbf{dominant} if
 $\mu(a_\pi)
\ge \mu(a_\sg)$
for every permutation $\sg$.

A matrix $A = (a_{i,j})$ has a \textbf{dominant diagonal} if the track
$a_{\id}= a_{1,1}\cdots a_{n,n}$  is dominant.
\end{enumerate}
 \end{definition}

 This relates to the optimal assignment question, as
explained in \cite{AGG2}.

\begin{lemma}
\label{dom9}
If a sequence  $\{ a_1, \dots, a_m\} $ has a single dominant element $a_j$, then $ \sum _{i=1}^m a_i = a_j.$
\end{lemma}
\begin{proof}
 $\mu(a_j + a_i) = \mu( a_j)$ for each $i\ne j,$ since otherwise $a_i$ would also be dominant, implying $a_i+a_j = a_j$; but then we can remove each $a_i$ one by one.
\end{proof}
\begin{rem} $ $
When only one track $\pi$ is dominant, $\mu({\Det{A}}) = \mu((-)^\pi a_\pi)$, and if all the entries of $A$ are not in $\mcA_0$, then $A$  is nonsingular.
 \end{rem}

In order for the theory to have substance, we need some condition on $\mu,$ in view of Remark~\ref{nont}(iv).

\begin{lemma}\label{l1} Assume that $(\mathcal A,\mcA_0)$ is an \admissible, \efinal\  (mon-)pair with \PropertyN, having a modulus $\mu$ satisfying the hypotheses  for $a_i \in \tT$:
 \begin{enumerate}
     \item[(H1)]  $\mu(a_1) =\mu(a_2)$ implies either $a_1+a_2 \in \mcA_0$ or $a_1+a_2^\dag \in \mcA_0.$
     \item[(H2)] $\sum_{i=1}^t a_i \in \mcA_0$ implies $a_i+a_j\in \mcA_0$ for some $i\ne j.$
 \end{enumerate}
  Then,  $\mu$ also satisfies:
    \begin{enumerate}  \item\label{l1-0}  If $b=\sum_{i} a_i $ with $a_i\in \mcA$ then $b = \sum_{i} \{a_i : \mu(a_i) = \mu (b)\}. $

    \item\label{l1-1}   $\mu(b) = \mu(a)$, $b \in \mcA$ and $a\in \tT$ implies  $b+a=ae \in \mcA_0$ or $b+a^\dag=ae \in \mcA_0.$ Hence $b+ae = ae.$
    \item\label{l1-3} $\mu(b_1) = \mu(b_2)$ for $b_1,b_2 \in \mcA$ implies either $b_1+b_2 =a e\in \mcA_0$ or $b_1+b_2^\dag =a e\in \mcA_0,$ for some $a\in \tTz$.


      \item\label{l1-4a}     $ \mcA_0=\tT e$.

    \item\label{l1-4}   If   $b_1 = ae \in \mcA_0$ and $b_2\in\mcA$ with $\mu(b_2)\le \mu(b_1)$ then $b_1+b_2=b_1 = ae\in \mcA_0$.

\item \label{l1-5}If $b=\sum_{i} b_i\in \mcA_0$ with $b_i\in \mcA$, then there exists a subset $B$ of one or two dominant terms $b_i$, such that $b=\sum_{b'\in B} b'\in \mcA_0$.
\item \label{l1-6} If $b_j=\sum_{i} a_{ij}\notin \mcA_0$ with $a_{ij}\in \tT$, for $j=1,2$,  and $b=b_1+b_2\in \mcA_0$, then there exists $i_j$, $j=1,2$, such that $b=a_{i_1,1}+a_{j_2,2}$ and $\mu(a_{i_j,j})=\mu(b_j)=\mu(b)$.
    \end{enumerate}
\end{lemma}
\begin{proof}
\ref{l1-0} All the non-dominant tangible elements $a_i$ get absorbed into the dominant $a_j$, cf.~\Cref{mod1}(ii).

   \ref{l1-1}
Since $\mu(a)\neq \zero$ for $a\in \tT$, we have $b\neq \zero$.
Then since $\mcA$ is \admissible, we can write $b = \sum_i a_i $ for $a_i\in\tT$.
For each dominating $a_i$, we have $\mu(a_i)=\mu(b) = \mu(a),$ so
   by hypothesis,  $a_i+a \in \mcA_0$ or $a_i+a^\dag \in \mcA_0.$ By \PropertyN\ this implies $a_i+a =ae$ or $a_i+a^\dag=ae.$
Hence each $a_i$ can be replaced by $a$   or $a^\dag$.
   But  then we conclude by $e$-finality.
   Then by \Cref{finalplus}, we get that $b+ae=a e$.

    \ref{l1-3} The case $\mu(b_1) = \mu(b_2)=\mu(\zero)$ is trivial
since this implies $b_1=b_2=\zero$.
If $b_2\in \tT$, this follows from \ref{l1-1} with $a=b_2$.
Otherwise, $b_2\in\mcA\setminus\{\zero\}$, then
writing  $b_2 = \sum a_i $ for $a_i\in\tT,$ such that $\mu(a_i)=\mu(b_2)$, by \ref{l1-0},
and applying \ref{l1-1} to $b_1$ and $a_1$,
we get that $b_1+a_1=a_1e$ or $b_1+a_1^\dag=a_1e$.
For instance, assume  that  $b_1+a_1=a_1e$, then applying \ref{l1-1}
to $a_1 e$ and successively each $a_i$,
we deduce $b_1+b_2=a_1 e$.
The same holds if $b_1+a_1^\dag=a_1e$, by applying \ref{l1-1}
to $a_1 e$ and successively each $a_i^\dag$.
This shows \ref{l1-3} for any $b_2$ with $a=a_1$.


  \ref{l1-4a} Clearly   $ \mcA_0\supseteq \tT e$.  For inclusion, if   $b  \in \mcA_0$, write $b = \sum_{i=1}^t a_i$ with $\mu(a_i) = \mu(b)$ for each $i$.  By (H2) we may assume that $a_1 +a_2 \in \mcA_0,$ so $a_1+a_2 = a_1e.$ By the same arguments as for \ref{l1-3}, we get $a_1e + a_3 = a_1e, $ and continuing, we get
  $b = (a_1e + a_3) + \dots + a_t = a_1 e.$

    \ref{l1-4} Clearly $\mu(b_1) = \mu(a) .$ If $\mu(b_2)<\mu(b_1)$, this comes from the definition of a modulus since then $b_1+b_2=b_1$.
Otherwise, this comes from \ref{l1-1}.





\ref{l1-5} Using \ref{l1-0}, we  write $b$ as the sum of the dominant elements $b_i$. Then, decomposing each $b_i$ as sums of tangible dominant elements, using (H2),
there exist $a,a'\in \tT$ such that $a+a'\in \mcA_0$,
where $a$ is a summand of some $b_i$, $a'$ is a summand of some $b_j$,
and if $i=j$, then $a,a'$ are not the same summands of~$b_i$.
By \ref{l1-4} and \Cref{l1}, \Cref{l1-4}, we deduce that $b=ae$. Also, if $i\neq j$ then $a+a'=b_i+b_j$, and if $i=j$, then $a+a'=b_i\in \mcA_0$.

\ref{l1-6} This follows from \ref{l1-5} applied to $b$ and the decomposition in the sum of all $a_{ij}$, since there are necessarily two dominant terms because $a_{ij}\notin \mcA_0$ for all $i,j$, and the two dominant terms $a_{ij}$ obtained from \ref{l1-5} cannot both be a summand of $b_1$, since otherwise $b_1\in \mcA_0$, and the same for $b_2$.
\end{proof}
\begin{remark}
    $ $
    \begin{enumerate}
        \item
Without (H2), we may have that all $a_k+a_l\not\in \mcA_0$ so $a_k+a_l^\dag\in \mcA_0$ and one can only deduce   $\mcA= \tT\cup  \tT e \cup \{\sum  a_i  \mid a_i\in \tT, a_i +a_j^\dag = a_i e, \text{ for all}\; i\ne j\}.$

 For instance one may consider $\mcA=\Z\cup\{e\}$, $\tT=\{-1,1\}$, with $\one =1,$ and $m\in \mcA_0$ if either $m=e$ or $m\in\Z$ is multiple of some prime number $p$.
Assume $m(-)n=\one(-)\one$ for $m,n>0$ and take $\mu$ equal to $\one$ on $\mcA\setminus\{0\}$.
Then $p\in \mcA_0$ and $p(-)\one=e$ but $p+\one\not\in\mcA_0$.
Note however that this set $\mcA$ does not seem to satisfy the N-transitivity.

        \item    If $(\mcA,\mcA_0)$ is $\mcA_0$-bipotent then (H1) implies (H2).
 Indeed,
    write $b= \sum_{i=1}^{t-1} a_i.$ Then $b\in \mcA_0$ and we are done by induction, unless $b= a_i$ for some $i.$ But then $a_i+a_t\in \mcA_0.$
    \end{enumerate}

\end{remark}
\begin{proposition}\label{A1part} Condition \A{1} holds 
if   $(\mathcal A,\mcA_0)$ is an \admissible, \efinal\ , N-transitive pair with \PropertyN, having a modulus $\mu$ and satisfying
 the hypotheses of \Cref{l1}, where
 $\nabla=\nabladag$.
 \end{proposition}
\begin{proof} (Paralleling
\cite[Theorem~8.11]{Row21}.) 
 Localizing $\tT,$ we may assume that $\tT$ is a group.
Let us show Property \ref{nab-A1alt2} of \Cref{nab-A1alt}.
 It is enough to consider a square matrix $A$ such that
 $A \one_{(n)}\, \nabla \,\zero$ and prove that
${|A|_+} \, \nabla \,{|A|_-}$.


If all tracks have modulus zero, then the modulus of $|A|_{\pm}$ is zero, which implies that  $|A|_{\pm}=\zero$ and the assertion is immediate. Assume now that at least one track has a nonzero modulus.
By \Cref{dom9},  ${|A|_++|A|_-^\dag}$ is the sum of all dominant even tracks   and $\one^\dag$ times all dominant odd tracks.
Then, if some dominant track belongs to $\mcA_0$, we have ${|A|_++|A|_-^\dag} \in \mcA_0$.

Thus assume that all dominant tracks are not in $\mcA_0$.
Permute the rows of $A$ so that the diagonal is a dominant track.
Thus $a_{i,i}\not \in \mcA_0$,
for all $i$. Since $\sum_j a_{i,j}\in \mcA_0$,  there is
at least one dominant entry $a_{i,j}$ in this sum with $j\neq i$, so outside the diagonal, and possibly another dominant
entry $a_{i,k}$ such that the sum of these (one or two) dominant entries is in $\mcA_0$ and is equal to $\sum_j a_{i,j}$.
Consider the graph $\mathcal G$ obtained by taking $[n]$ as the set of nodes and
such that $(i,j)$ is an arc if $a_{i,j}$ is one of the entries
involved in the sum of row $i$ as above.
For each $i$,  since $a_{i,j}$ is dominant we have
necessarily $\mu(a_{i,j})\geq \mu(a_{i,i})$.
Since for all $i$, there is at least one arc $(i,j)$ with $j\neq i$,
there exists at least one non-trivial cycle
$c=(i_1,\ldots, i_k,i_{k+1}=i_1)$ in the graph.
Consider the permutation $\sigma$
such that $\sigma(i_{l+1})=i_l$ for all $l=1,\ldots, k$ and $\sigma(i)=i$
for $i\not\in c$.
Since  $\mu(a_{i,j})\geq \mu(a_{i,i})$, for all arcs $(i,j)$,
we have $\mu(a_\sigma)\geq\mu(a_{\id})$, for the track $a_\sigma$ associated to $\sigma$, hence the track $a_\sigma$
is dominant and $\mu(a_{\sigma(j),j})=\mu(a_{j,j})$ for all $j$.
Applying the permutation $\sigma$ to the rows of $A$,
we obtain a new matrix $A'$ which satisfies the same properties
as $A$ (sum of rows in $\mcA_0$, dominant diagonal track,  and thus all
diagonal entries not in $\mcA_0$), however
we know that, for all $i$ in the cycle $c$, the loop $(i,i)$ is in the new
graph $\mathcal G'$ constructed using the matrix $A'$.
Since the loops $(i,i)$ with $i$ outside~$c$ are unchanged, the number
of loops of $\mathcal G'$ is greater or equal to the number of loops of
the initial graph~$\mathcal G$, with equality if and only
$\mathcal G$ already contained all the loops $(i,i)$ with $i\in c$.

Applying such an operation successively to the matrix $A$, we obtain
a sequence $(A_p)_{p\geq 0}$ of matrices which are all obtained by a permutation of rows of the initial matrix $A$. Denoting by $k_p\leq n$, the number of loops of the graph $\mathcal G_p$ associated to $A_p$ as above, we have that $(k_p)_{p\geq 0}$ is nondecreasing.
The sequence is necessarily stationary after some step $p$:
meaning that $k_{p+1}=k_p$, hence $A_p$ is
such that there exists a cycle $c=(i_1,\ldots, i_k,i_{k+1}=i_1)$ with for all
$l=1,\ldots , k$, both $(i_l,i_l)$ and $(i_l,i_{l+1})$ are arcs in $\mathcal G_p$, that is $a_{i_l,i_l}+a_{i_l,i_{l+1}}=\sum_{j} a_{i_l,j}\in \mcA_0$
and $\mu(a_{i_l,i_l})=\mu(a_{i_l,i_{l+1}})\geq \mu(a_{i_l,j})$ for
all $j$.
Since the positive and negative determinants of $A$ and $A_p$ are either the same or exchanged, we may assume from now on that $A_p=A$ satisfies the above conditions.

Let $\sigma$ be the permutation constructed as above: $\sigma(i_{l+1})=i_l$ for all $l=1,\ldots, k$ and $\sigma(i)=i$ for $i\not\in c$.
Assume first that $k-1$ is even, as is then  the permutation $\sigma$.
Then, $|A|_+=a_{\id}+a_{\sigma}+b$ for some $b\in \mcA$, such that
$\mu(b)\leq \mu(a_{\id})=\mu(a_{\sigma})$ and $\mu(|A|_-)\leq \mu(|A|_+)$.
We also have $a_{\id}+a_{\sigma}=b_1b_2$
with $b_1\in \mcA$ and $b_2=\prod_{l=1}^k a_{i_l,i_l} +\prod_{l=1}^{k}  a_{i_l, i_{l+1}}$.
For each $l=1,\ldots, k$, $a_{i_l,i_l}$ and $a_{i_l, i_{l+1}}$ have same modulus and are not in $\mcA_0$. So,
 we can decompose them as sums of tangible elements
with same modulus. Also, by \Cref{l1}\ref{l1-6}, there exists $d_l,d'_l\in \tT$ such that
$d_l+d'_l\in \mcA_0$, $d_l$ is a summand of $a_{i_l,i_l}$ and
$d'_l$ a summand of $a_{i_l, i_{l+1}}$.
Then, one can write $b_2=b_3+b_4$ with $\mu(b_4)\leq\mu(b_3)$
and $b_3=\prod_{l=1}^k d_l +\prod_{l=1}^{k} d'_l$.
Normalizing by the $d_l$ and applying \Cref{dom2}\ref{dom2-1b}, we deduce that $b_3\in \mcA_0$.
Then, applying successively \Cref{l1}\ref{l1-4}, and the property that $\mcA_0$ is a semimodule (and $\mcA$ is a semiring), we deduce that $|A|_+\in\mcA_0$ and then that $|A|_+\nabla |A|_-$.

Assume now that $k-1$ is odd, as is then  the permutation $\sigma$.
Then, $|A|_+=a_{\id}+b_+$ and $|A|_-=a_{\sigma}+b_-$ for some $b_{\pm}\in \mcA$, such that $\mu(b_{\pm})\leq \mu(a_{\id})=\mu(a_{\sigma})$.
Applying the same arguments as above to $|A|_++|A|_-^\dag$,
instead of $|A|_+$, we need to show that
$b_3=\prod_{l=1}^k d_l +\one^\dag\prod_{l=1}^{k} d'_l\in \mcA_0$.
Normalizing by the $d_l$ and applying \Cref{dom2}\ref{dom2-1a}, we deduce that $b_3\in \mcA_0$, and then $|A|_++|A|_-^\dag\in\mcA_0$.
Similarly, applying the same arguments to $|A|_+^\dag+|A|_-$,
we need to show that
$b'_3=\one^\dag\prod_{l=1}^k d_l +\prod_{l=1}^{k} d'_l\in \mcA_0$.
Normalizing by the $d'_l$ and applying \Cref{dom2}\ref{dom2-1a}, we deduce that $b'_3\in \mcA_0$, and then $|A|_+^\dag+|A|_-\in\mcA_0$.
\end{proof}

\subsection{Cramer's rule} \label{Cr} $ $

In {\cite[Theorem 4.18]{AGG2}},  Cramer's rule was proved for a semiring with a balance relation, under several assumptions.
\Cref{theoalphaJ10bis} shows that the conclusions of this theorem follow from Condition \A{1} and unique negation. They are indeed
equivalent to these conditions by \Cref{A1alt}.

\begin{theoalpha}[Cramer's rule, Part 1]\label{theoalphaJ10}
 Let $(\mathcal A,
\mathcal A_0,(-))$  be a $\tT$-semiring   \triple.
Then, for any vector $\mathbf v$,
the vector $\mathbf w  = {\adj A\mathbf v }$ satisfies $|A|\mathbf v \, \nablaminus \, A \mathbf w.$
In particular, for $|A|$   invertible in
$\tT$, then
 $\mathbf x:= \frac 1 {|A|}{\adj A\mathbf v }$ satisfies
$\mathbf v \, \nablaminus \, A \mathbf x.$
\end{theoalpha}

\begin{proof}
    ${|A|}I\, \nablaminus \, A \,{\adj
A}$  in any $\tT$-semiring t  riple, implying ${|A|}\mathbf v  \,\nablaminus \, A ({\adj A} \mathbf v) =
A\mathbf w.$
\end{proof}

\begin{theoalpha}[Cramer's rule, Part 2]\label{theoalphaJ10bis}
 Let $(\mathcal A,\mathcal A_0,(-))$  be a $\tT$-semiring \triple\ satisfying Condition \A{1}.
Then, for any vectors $\mathbf v\in \mcA^{(n)}$ and $\mathbf w\in \tTz^{(n)}$ satisfying $A \mathbf w\nablaminus \, \mathbf v$,
we have
$|A|\mathbf w\nablaminus  {\adj A\mathbf v }$.
In particular, for $|A|$~invertible in
$\tT$, then
 $\mathbf x:= \frac 1 {|A|}{\adj A\mathbf v }$ satisfies
$\mathbf w \, \nablaminus \, \mathbf x.$

Moreover, if $\mcA$ has unique negation, and $\adj A\mathbf v\in \tTz^{(n)}$ then $ \mathbf x$ is the unique solution $w$ of $A \mathbf w\nablaminus \, \mathbf v$.
\end{theoalpha}
\begin{proof} The first assertion is the equivalence between
Condition \A{1} and  \Cref{A1alt}{\rm \ref{A1alt3}}.
The two other assertions are clear.
\end{proof}

\begin{rem}
Taking $\mathbf v=(\zero)$ shows
$\Det A \in \mcA_0,$ yielding Condition \A{1}.
\end{rem}
As in \cite[Corollary~4.19]{AGG2}, we can obtain the true Cramer's rule as a consequence.

 \begin{corollary}\label{forlater} Let $(\mathcal A,\mathcal A_0,(-))$  be a $\tT$-semiring \triple\ satisfying Condition \A{1}
 and having unique negation.
 Suppose that $A$ is an $n \times (n+1)$ matrix over $\mcA$,
 and let $A_{(k)}$ denote the matrix
obtained by deleting the $k$ column of $A$.
Let $\mathbf v  = (v_1, \dots, v_{n+1})$ where $v_k = (-)^{n-k+1}\Det{A_{(k)}}.$
Assume that $\mathbf v\in \tTz^{(n+1)}$.
 Then any tangible solution of $Ax \in \mcA_0^{(n)}$ is a tangible multiple of $\mathbf v $. \end{corollary}

Using  the  surpassing relation $\preceqzero$, we can   state a   result which strengthens \Cref{theoalphaJ10}.

\begin{theoalpha}[improving \Cref{theoalphaJ10}]\label{theoalphaJ2}  Let $(\mathcal A,
\mathcal A_0,(-))$  be a triple,  and take the  surpassing relation ~$\preceqzero$.   For any vector $\mathbf v$,
the vector $\mathbf x  = {\adj A\mathbf v }$ satisfies $|A|\mathbf v
\preceqzero A \mathbf x.$

In particular, for $|A|$ tangible invertible
and ${\adj A\mathbf v }$ tangible,
 $\mathbf x:= \frac 1 {|A|}{\adj A\mathbf v }$ satisfies
$\mathbf v \preceqzero A \mathbf x$.
Furthermore, if $A$ is $\mcA_0$-cancellative in the sense that $A \mathbf y\in \mcV_0$ implies $ \mathbf y\in \mcV_0$,
and if $\mathbf v \preceqzero A \mathbf x'$, then $ \mathbf x\, \nablaminus \,  \mathbf x'$.
 \end{theoalpha}
 \begin{proof}
     An immediate application of $|A| I\preceqzero A \adj A,$ which follows at once from \cite{zeilberger85}. For the last assertion,
     $A( \mathbf x(-) \mathbf x') = A \mathbf x (-) A \mathbf x' \succeq \zero,$
     so $ \mathbf x(-) \mathbf x'\in \mathcal V_0,$ implying $ \mathbf x\, \nablaminus \,  \mathbf x'.$
 \end{proof}

 Cramer's rule often provides a solution  to Condition \A{1},
  since   $\Det A  \in \mcA_0$ implies $\Det A \mathbf{v}\in \mathbf{V}_0$ for  arbitrary~$\mathbf{v} \in \mathbf{V}$.
 But unfortunately, the vector $\mathbf w$ in Theorem~\ref{theoalphaJ10} might not be
tangible; to overcome this obstacle
  requires more work, to be done below in
Proposition~\ref{theoalphaN}.

Considerably stronger assumptions are required to make $\mathbf x$ tangible,
 as in \cite{IzhakianRowen2008Matrices2}.

\subsection{Translation of terminology from \cite{AGG2}}  $ $

 Let us now discuss the assumptions used in  \cite[Theorem~4.18]{AGG2}, to obtain the same conclusions as in \Cref{theoalphaJ10} and \Cref{theoalphaJ10bis}.
To see this, we need to correlate the concepts of
 \cite{AGG2} with the terminology of this paper.

We assume throughout that $(\mathcal A,\mathcal A_0,(-))$ is a $\tT$-semiring triple, and that $\nabla = \nablaminus$. (Lacking this, the terminology of  \cite{AGG2} would not be meaningful.)
The ``thin set'' $S^\vee$
of~\cite{AGG2} is replaced here by ~$\tTz$.

The main properties in  \cite{AGG2} are related to balancing.

 \begin{definition}[\cite{AGR2}]\label{tangbal}   $\mcA$ is  \textbf{tangibly balanced} if for all  $b_1,b_2\in \mcA$
 such that  $b_1\nabla b_2$, there exists  $a\in \tTz$ such that
 $b_1\nabla a$ and $a\nabla b_2$.
   $\mcA$ has  \textbf{tangible balance elimination} if $b_1\nabla a$ and $a\nabla b_2$ imply $b_1\nabla b_2$, for all $b_1,b_2\in \mcA$ and $a\in \tTz$.
    \end{definition}

 In the following definition we use the terminolgy of \cite{AGG2}.
\begin{definition}\label{bale}$ $
A semiring pair $(\mathcal A,\mathcal A_0)$ allows \textbf{weak balance elimination} with respect to a balance relation~$\nabla$ if  the relations    $(\sum _j b_{j} a_j) \nabla
d$ and $
a_j \nabla c_j$ for $a_j \in \tT$
and $c_j,b_j,d\in \mcA$ together imply $(\sum _j b_{j} c_j) \nabla d$.
 \end{definition}

\begin{rem}\label{ord3}$ $
  \begin{enumerate}
\item
 \cite[Property 4.1]{AGG2} translates to unique negation.
\item \cite[Property 4.2]{AGG2} is in our running hypotheses that $\tT_0$ is a monoid.
\item \cite[Property 4.3]{AGG2} (``weak transitivity of systems of balances'') is a generalization of \Cref{bale} in which there several balance equations and a multiplicative factor in front of the $a_j$.
This is equivalent to \Cref{bale}, since $\tT_0$ is a monoid in this paper.
\item  \cite[Definition 4.4]{AGG2} defines ``weak balance elimination'' as
\cite[Property 4.3]{AGG2} together with \cite[Property 4.2]{AGG2}, which  is equivalent to
\Cref{bale} since \cite[Property 4.2]{AGG2}   always holds in this paper.
\item \cite[Definition 4.4]{AGG2}  also defines ``strong balance elimination'' which is weak balance elimination together with unique negation.
\item \cite[Property 4.5]{AGG2} (``weak transitivity of balances'') is equivalent to tangible balance elimination in \Cref{tangbal}, that we also used in \cite{AGR2}.
\item \cite[Property 4.6]{AGG2}  (``weak transitivity of scalar balances'') is the restriction of \Cref{bale} to the case of one summand.
  \item \cite[Property 4.7]{AGG2} translates into $\mcA=\mcA_0\cup \tT$.
\item \cite[Property 4.8]{AGG2} follows from the property that $\mcA$ is admissible.
\item \cite[Lemma 4.11]{AGG2} states that \cite[Properties 4.5, 4.2, 4.8]{AGG2} imply weak balance elimination. Under the assumptions of this paper, this translates to ``tangible balance elimination implies weak balance elimination'' for an admissible pair.
 \end{enumerate}

\end{rem}
The statement of \cite[Theorem~4.18]{AGG2} uses  weak balance elimination as an assumption
  to obtain the first assertion of \Cref{theoalphaJ10bis}, and strong balance elimination
  as an assumption to obtain the last assertion of \Cref{theoalphaJ10bis}.
  However, the proof was the same as in \cite[Theorem~6.4]{AGG1}, which also
  uses \cite[Property 4.7]{AGG2}, that is that $\mcA=\mcA_0\cup \tT$. So   \cite[Theorem~4.18]{AGG2} implies the following:

  \begin{theorem}[Corollary of \protect{\cite[Theorem~4.18 and Lemma 4.11]{AGG2}}]\label{theocramerGut}
  Condition \A{1} is true when $(\mathcal A,\mathcal A_0)$ is admissible, has tangible balance elimination and
  $\mcA=\mcA_0\cup \tT$.
  \end{theorem}

 Strong balance
 elimination provides uniqueness in Cramer's rule and so the conclusion of  \Cref{forlater} as in \cite[Corollary~4.19]{AGG2}.

 We now give other proofs  of \Cref{theocramerGut} without the condition   $\mcA=\mcA_0\cup \tT$.



Metatangible pairs are particularly well-behaved in these terms.

\begin{proposition}\label{lemdep1}
Tangible balance
elimination
holds if $(\mathcal A,\mathcal A_0)$ is N-transitive metatangible.
\end{proposition}
\begin{proof}  We need to
show that if $a \in \tTz$ and $a \nabla b_i$, $i=1,2$, then $b_1\nabla b_2$.

We know that either $b_i = a_i^\circ$ or $b_i=m_i a_i$ with $m_i\in \Net$ (with $\mcA$ being of the first kind if $m_i\geq 3$). For $m_1=m_2=1$ the assertion follows from \Cref{balNpropN}, since metatangible pairs have property N.

For  $b_1 =a_1^\circ$  and $b_2 =a_2^\circ$, the assertion is obvious.

For $m_1 =1$ and $b_2 =a_2^\circ$ we know that $a_1(-)a \in \mcA_0,$ so $a_1(-)a = ae.$ We also have $a_2^\circ(-)a\in \mcA_0$. If $a(-)a_2 \in \mcA_0,$ then $a (-)a_2 =a^\circ= a_2^\circ$,   so $a_1(-)a_2^\circ = a_1(-)a^\circ = ae +a =a (-)a_2^\circ $ by \Cref{moduu-2} of \Cref{moduu}, so
we are done. Hence we may assume $a (-)a_2 \in \tT,$ so $a(-)a_2^\circ = (a (-)a_2)+ a_2 = a_2^\circ,$ by~Property~N. Thus $a_1(-)a_2^\circ =(a_1(-)a)(-)a_2^\circ = a ^\circ (-)a_2^\circ \in \mcA_0.$

The same argument holds if $m_1=2$ and $m_2 =1.$
These calculations yield the result when  $(\mathcal A,\mathcal
A_0)$ is of the second kind, or $m_i\leq 2$, so we may assume that $(\mathcal A,\mathcal A_0)$ is of the first kind.

 Take $m_i'$ maximal such that $m_i'a_i+a \in \tT.$
Then $(m_i'+1)a_i + a \in \mcA_0,$ so 
$$(m_i'+1)a_i + a = 2 a_i=2 m'_ia_i+2 a ,$$ by Property~N.

If $m_i\geq 2$, we can write
\begin{equation}
    \label{e2}b_i =  (b_i-2a_i ) +(m_i'+1)a_i +a = (m_i+m'_i-1) a_i+a = (b_i +a)+(m_i'-1)a .
\end{equation}
Then \begin{equation}
    \label{e1} b_1+b_2= (m_1+m'_1-1) a_1+(a+b_2).
\end{equation}

If $b_1$ and $b_2$ are both in $\mcA_0$ then we are done, so assume that $b_1\notin \mcA_0.$
Thus, \eqref{e2} implies $m_1'$ is even.
So we are done unless $m_1 =1,$ and thus $m_1'=0.$

 If $m_2 =1$ then we have already proved the result, so we may assume that
$m_2\geq 2$. The analogy of \eqref{e1} and \eqref{e2} shows that
$b_1+b_2= a_1+a+ (m_2+m'_2-1) a_2= 2a+ (m_2+m'_2-1) a_2=  a+ ((m_2+m'_2-1) a_2+a)= a+ b_2\in \mcA_0$, as desired.
\end{proof}



 \subsubsection{Weak balance elimination and Condition \A{1}}$ $



 \begin{lemma}[Compare with \protect{\cite[Lemma 4.11]{AGG2}}]
      If   $\mathcal A$ is \admissible, 
      then {weak balance elimination}
      is equivalent to tangible balance elimination.
  \end{lemma}

  \begin{proof} $(\Rightarrow)$
  Let $d,c\in \mcA$,  $a\in \tTz$, such that $d\,\nabla\, a$ and $a\,\nabla\, c$. Then, applying weak balance elimination to the equations $a\nabla d$ and $a\nabla c$ with one term, we get $d\nabla c$.

$(\Leftarrow)$ Assume tangible balance elimination.
Consider $a_j \in \tT$
and $c_j,b_j,d\in \mcA$ satisfying   $(\sum _{j=1}^t b_{j} a_j) \, \nabla \,
d$ and $
a_j \, \nabla \, c_j$. We want to prove  $(\sum _{j=1}^t b_{j} c_j) \, \nabla \,
d$.  By decomposing the $b_j$, we may assume that $b_j\in\tT$
for all $j$. We want to prove $(\sum _{j=1}^t b_{j} c_j) \, \nabla \,
d$.

We proceed by induction on $t.$
For $t=1,$ we are given $ba\, \nabla \,
d$ and $
a \, \nabla \, c$, with $a \in \tT.$ Multiplying by $b$ the second equation,
we get also $ba\, \nabla\, b c$, and $ba\in \tT$,
so by tangible balance elimination we obtain $bc\, \nabla \,
d$.

In general we have
$b_1 a_1 \, \nabla \, ((-)(\sum _{j\neq 1} b_{j} a_j) +d$)
and $b_1 a_1\, \nabla \, b_1 c_1$. Since $b_1 a_1\in \tT$, apply the case $t=1$ or tangible balance elimination
to get that $b_1 c_1 \, \nabla \, (-)((\sum _{j\neq 1} b_{j} a_j) +d)$, that is, $(\sum _{j\neq 1} b_{j} a_j) \, \nabla \, d (-)b_1c_1$. By  induction we get
 $(\sum _{j\neq 1} b_{j} c_j) \, \nabla \, d(-) b_1c_1$, which
yields $(\sum _j b_{j} c_j) \, \nabla \, d $.
  \end{proof}



We say that a set $S$ is \textbf{pairwise balanced} with respect to a balance relation $\nabla $ if $b_i \nabla b_j$ for all $b_i,b_j\in S.$ $(\mathcal A,
\mathcal A_0,(-))$ is \textbf{multiply balanced} if any pairwise balanced set is balanced by some tangible element.
This is a generalization of the tangibly balanced property.
Moreover, {this hypotheses holds when $\mcA=\tT \cup\mcA_0$.}
(If one element of $S$ is tangible, take this one, otherwise all elements are in $\mcA_0$ so take $\zero$.)

\begin{lemma}\label{mb1}
    Metatangible pairs of the second kind are multiply balanced.
\end{lemma}
\begin{proof}
  If $a^\circ\nabla b$ for $a\in \tT,$ then $a\nabla b,$ by \Cref{bip11}(ii). If $a_1^\circ \nabla a_2^\circ$ then again we are done by \Cref{bip11}(ii).
\end{proof}

Example~\ref{metex}(i)(b) with $m=5$ provides a counterexample to the analogous assertion of \Cref{mb1} for the first kind, since $3+4\in \mcA_0$ but $3+1\notin \mcA_0$.

\begin{theoalpha}[Condition \A{1} via  weak balance elimination]\label{theoalphaJ}
 Let $(\mathcal A,
\mathcal A_0,(-))$  be a $\tT$-semiring triple
which is multiply balanced  with respect to the balance relation $\nabla $, and
 satisfies  weak balance elimination.
 Then Condition \A{1} holds.
 \end{theoalpha}

\begin{proof}[Proof of \Cref{theoalphaJ}]
Here we improve  the proof of \cite[Theorem~6.4]{AGG1}, for otherwise more assumptions would be required.
We verify
the implication  \eqref{nulldet1}, which is equivalent to Condition \A{1} by \Cref{A1alt}.

 We prove this property by induction on $n$.
 The property trivially holds for $n=1$.
 If this property holds for $n-1$, let us consider the $n\times n$ matrix $A$
 such that $A\one_{n}\, \nabla \,\zero$.
 Let $\tilde{A}$ be the $(n-1)\times n$ submatrix of $A$
 obtained by eliminating the last row.
 For every $j<j'$ let us consider the
 $n\times n$ matrix
 $B$ obtained from $\tilde{A}$ by adding the columns $j$ and $j'$ put in the place of the $j$ column.
 Then, $B\one_{(n-1)}\, \nabla \,\zero$,
 so by the induction assumption, $|B|\, \nabla \,\zero$, which is equivalent to
$(-)^j a'_{n,j}\, \nabla\, (-)^{j'} a'_{n,j'} $,
where  $(a'_{ij})$ is the comatrix of~$A$.
Under the multiply balanced hypothesis, taking $S = \{(-)^j a'_{n,j}: 1\le j\le n\}, $
there exists $a\in \tTz$ such that
$a\nabla\, (-)^j a'_{n,j} $ for all $1\le j\le n$.
Multiplying the last balance equation
$\sum_{j=1}^{n} a_{n,j}\, \, \nabla \,\zero$
by $a$, then using weak balance elimination to replace
$a a_{n,j}$ by $(-)^j a'_{n,j}a_{n,j}$,
and using \Cref{theoalphaH}, we deduce that
$|A| = \sum _{j=1}^n (-)^{i+j} a'_{n,j}a_{n,j} \, \nabla \,\zero$.
\end{proof}

\begin{corollary}\label{tran1}  Condition \A{1}  holds for N-transitive metatangible pairs of the second kind, and for N-transitive metatangible pairs of the first kind which are multiply balanced.
\end{corollary}
\begin{proof}
    Combine \Cref{theoalphaJ}, \Cref{lemdep1} and \Cref{mb1}.
\end{proof}

\subsection{Positive Results for Conditions \A{2} and \A{3}}\label{pos}$ $

We turn next to   situations in which   Condition \A{2} holds
after all.
 Note that
the hypothesis of  Condition~\A{2} is not affected by multiplying  any
row through by a given tangible element, which we do repeatedly
throughout the proofs.

\begin{rem}   Condition \A{2} is obvious for $n =1$. Furthermore, the entries of any $1 \times 2$ matrix are dependent,
so the column rank of any nonzero $1 \times n$ matrix is 1.
\end{rem}


 \begin{proposition}\label{pair3} For any~$m$, if $A$ is a tangible $2 \times
m$ matrix of submatrix rank 1,  then either one row is $(\zero)$ or the two rows of $A$ are tangibly proportional.\end{proposition}

\begin{proof} Let $\mathbf v_i = (a_{i,1}, a_{i,2})$  be the rows of the  matrix $A = (a_{i,j})$
for $i = 1,2$. First assume that $a_{1,1} = \zero.$ Then the
principal $2 \times 2$ minor has $(-)$-determinant $(-)a_{1,2}a_{2,1}$, and $a_{1,2}a_{2,1} \, \in \mcA_0 $ implies
  $a_{1,2}$ or $a_{2,1}$
is~$\zero$. If $a_{2,1} = \zero$ we conclude by induction on $m$;
otherwise we go down the line to get  $a_{1,j}=\zero$ for each $1
\le j \le m,$ and the proposition is clear.

Hence we may assume that $a_{1,1} \ne \zero$. By symmetry we may
assume that each $a_{i,j} \ne\zero.$
Localizing, we may assume that  $(\mcA,\mcA_0)$ is a gp-pair. Now we may normalize
(multiplying row $i$ by $a_{i,1}^{-1}$ for $i=1,2$) and assume that $a_{i,1} =
\one$ for $i = 1,2.$ But then the $1,j$ minor being singular implies $a_{1,j}=
a_{2,j}.$
\end{proof}

\begin{corollary}
Condition \A{2}  holds for  any tangible $2 \times
m$ matrix over a negated $\mcA_0$-bipotent pair, for any~$m$.
\end{corollary}

Next we show that the counterexample of Equation \eqref{impma1}  cannot be cut
to a $3 \times 3$ matrix, given a rather mild hypothesis.

\begin{lemma}\label{bi8} $ $  Suppose that $(\mcA,\mcA_0)$ is a $\mcA_0$-bipotent  pair of the second kind satisfying \PropertyN. For any $a\in \tT$, either \begin{enumerate}

       \item $a+a^{-1} = a+\one = a+a^{-1}+\one=a,$  or
        \item $a+a^{-1 }= a^{-1}+\one =a^{-1},$ or
         \item  $a^\circ = (a^{-1})^\circ = a+\one= a^{-1}+\one =a+a^{-1}+\one=e.$
 \end{enumerate}
 \end{lemma}
\begin{proof}
$a+\one = a( \one +a^{-1}).$

If $a+\one  =a$ then $$(a+\one) +a^{-1} = a+(\one+a^{-1}) = a+ a^{-1}(a+\one) = a+ a^{-1}a=a+\one = a,$$ and $a + a^{-1} = (a +1)+ a^{-1}=a.$

If $a+\one  =\one$ then $\one + a^{-1}= a^{-1}, $ and analogously
 $$a + a^{-1} = a^{-1}.$$

 Hence we may assume that  $a+\one = e,$ so $a^\circ =e$ and, by the analogous argument we may assume that $ {a^{-1}} +\one ={a^{-1}}^\circ =e$. Then $(a+\one) +a^{-1} = a+e = ae^+ = ae = e.$
 \end{proof}

For the remainder of this section, we assume $A\in M_n(\tTz),$ and
$\mathbf v_i = (a_{i,1}, \dots, a_{i,n})$ are the rows of $A$, for $1\le i \le n$. Let us obtain Condition \A{3} for $n = 3.$

 \begin{theoalpha}
     \label{theoalpha0} If the row rank   of a $3\times  3$ matrix $A$ over a $(-)$-bipotent pair with unique negation is  $3$, then $A$ is nonsingular.
 \end{theoalpha}
 \begin{proof} We prove the contrapositive, assuming that $A$ is singular. We claim that
the rows are  dependent unless $A$  can be obtained by
multiplying the matrix
\begin{equation}\label{goodmat3}\left(\begin{matrix} \one & \one & \one \\
\one & \one & a^{-1}  \\\one & a &\one
\end{matrix}\right)\end{equation}
by invertible diagonal matrices. Write $\mathbf v_i = (a_{i,1},
a_{i,2}, a_{i,3})$. For any $i$, $|A| = \sum _{j=1}^3
a'_{i,j}a_{i,j}\in \mathcal A _0.$
  Thus we are done if  $ a'_{i,j}  \in \tT$ for each $1 \le j \le 3$, so we may assume that
  the   $2 \times 2$ matrix $(a'_{i,j_i})$ is singular for suitable $j_1, j_2, j_3$. But in the $2 \times 2$   the two rows are proportional, by Proposition~\ref{pair3}.  If two of the $j_i$ are the same
  then the proportionality constants match, and the corresponding two rows are proportional, and we are done. Thus we may assume that
$j_1, j_2, j_3$ are distinct, and by symmetry we may assume that
$j_i = i.$ This means our vectors are:
$$\mathbf v_1 = (a_{1,1}, a_{1,2}, a_{1,3}), \qquad  \mathbf v_2 = (\beta_1 a_{1,1}, \beta_1 a_{1,2}, \beta_2\beta_3 a_{1,3}), \qquad
 \mathbf v_3 = ( \beta_2 a_{1,1}, \beta_1\beta_3^{-1} a_{1,2}, \beta_2 a_{1,3}).$$
for suitable $\beta_1 \in \tT$, which we may assume are all $\ne
\zero$ since otherwise the assertion is trivial. Normalizing, we may
assume that each $a_{1,j} = \one$, so
$$\mathbf v_1 = (\one, \one, \one), \qquad  \mathbf v_2 = (\beta_1  , \beta_1 , \beta_2\beta_3) , \qquad \mathbf v_3 = (\beta_2 , \beta_1\beta_3^{-1}  ,  \beta_2) .$$
Multiplying the second and third rows by $\beta_1 ^{-1}$ and
$\beta_2 ^{-1}$ respectively puts our matrix in the form
\eqref{goodmat3}. Thus $\Det{(A,\zero)} =(\one +a^{-1} +a, \one +\one+ \one)$ so
$$(a+\one)(a^{-1}+\one) = \one + (\one +a^{-1} +a) = \one +\one+ \one+\one.$$
If $(\mathcal A, \mcA_0)$ is of  the first kind, then $(a+\one)(a^{-1}+\one)\in \mcA_0,$
implying $a+\one \in \mcA_0$ or $a^{-1}+\one \in \mcA_0,$ so  $\mathbf v_1 +  \mathbf v_2 \in
\mcA_0^{(3)} $ or  $\mathbf v_1 +  \mathbf v_3 \in
\mcA_0^{(3)} .$
If $(\mathcal A, \mcA_0)$ is of  the second kind, then $\one+a +a^{-1}=\one,$
so, by \lemref{bi8}, $\one$ equals  $a$ or $a^{-1},$ and again $\mathbf v_2 \in
\mcA_0^{(3)} $ or  $\mathbf v_1 +  \mathbf v_3 \in
\mcA_0^{(3)} ,$ proving the claim.

But then, by the claim, the tracks are $1$, $a$ and $a^{-1},$ and the determinant is $2 \ne(-)\one (-) a(a^{-1}(-)\one)^2,$ so $a = \one,$ and thus $(-)\bfv_1 +\bfv_2 = (e,e,e).$
 \end{proof}

 Condition \A{2} was shown to hold in
the supertropical case in \cite[Theorem~6.5]{IR}.
The proof was seen in \cite[Theorem~8.11]{Row21} to work quite generally,
which we elaborate:

\begin{theoalpha}\label{theoalphaP} Conditions \A{3} and \A{4} are equivalent, with respect to $\nablaT,$ for any    $\tT$-semiring   triple  $(\mathcal A,
\mathcal A_0,(-))$ with a $\tT$-modulus  $\mu$ satisfying the property:
  $\mu(a) = e$ implies $a^\circ = e = e+\one.$
\end{theoalpha}
\begin{proof}
(Based on \cite{IzhakianRowen2008Matrices}.)

In view of \Cref{imps},
we need to prove that Condition \A{4} implies Condition \A{3} under the given hypotheses.
Suppose $| A| \in \mcA_0$. We want to prove that its  rows
$\mathbf v_1, \dots, \mathbf v_n$ are
 dependent.

Taking fractions, we may assume that    $\tT$ is a group.  By induction on
$n$, we assume that the theorem is true for all matrices of smaller
size. (The assertion is vacuous for $n=1$, since either hypothesis is equivalent to the single entry of $A$ being in $\mcA_0.$)

With regard to the given  $\tT$-modulus $\mu$,   take a dominant track
$$a _\pi = a_{ \pi(1),1 } \cdots a_{ \pi(n),n },$$
where $\pi$ is a permutation  on $\{1, \dots, n \}$.

Rearranging the
vectors and their columns, we
 assume that the diagonal is dominant,
 i.e., $\pi = (1),$ and
$$a: =  a_{1,1} \cdots a_{n,n};$$
If $a = \zero$ are done by  Condition \A{4}, so we assume
that $a \ne \zero$.

By convention we write $b<b'$ if $\mu(b)<\mu(b').$ By assumption, $a _ \sigma \le_\mu a ,$  for all permutations $ \sigma$ on
$\{1,\dots, n \}$.

Assume first that some $a_{i,i} \in \mcA_0;$
we may assume that $i=1$. Here we  proceed similarly to the proof of Case I in the proof of
\cite[Theorem~6.5]{IzhakianRowen2008Matrices}. Write the
  matrix $\adjsym A={a_{i,j}'}^{\operatorname{doub}}$.

  Namely, take $c_i\in \widehat{\tTz}$
of the same $\mu$-value as ${a_{i,j}'}^{\operatorname{doub}}$.
 Then $\sum_{i=1}^n c
_i a_{i,1}$ has the same
 $\mu$-value as $\sum a'_{i,1}a_{i,1} = \Det A,$ but is in $\mathcal
 A_0$
 since by hypothesis $c_1 a_{1,1} $ is dominant and $a_{1,1}\in
\mathcal
 A_0.$ As in \cite{zeilberger85} or \cite{AGG1} (i.e., checking that the extra terms match in pairs),
 $\sum_{i=1}^n c _i a_{i,j}   \in \mathcal A_0,$ for all
$j \ne 1.$ Thus, $\sum_{i=1}^n c _i v_i   \in {\mathcal
 A_0}^{(n)},$ as
desired.

Hence we may  assume that   each $a_i:=a_{i,i} \in \tT.$ The same argument enables us to assume that every dominant track $\sigma$ is tangible.
We consider the path $i_1,$ $i_2 =  \sigma (i_1),$ \dots, $i_k =
\sigma (i_{k-1})$ in the reduced digraph  of $A$, where we
stop when $ \sigma (i_k) = i_u$ for some $1 \le u\le k$; since any
permutation $ \sigma$ is 1:1, we must have $ \sigma(i_k) = i_1$,
 so our path is a cycle. Furthermore we take  $ \sigma$ such that $k$ is maximal
 possible. Let $a'_i:= a_{i,\sigma(i)}.$  Note that because our path follows a permutation
of $(i_1, \dots, i_k),$ which could be extended (via the identity) to a  permutation
of $( 1, \dots, n),$ we must have
$ \mu(a_{i_1' }a_{i_2'}\dots
a_{i_k'}) \le  \mu(a _{i_1} \cdots a
_{i_k}),$
by hypothesis, and thus \begin{equation}\label{track1}  \mu(a_{i_1' }a_{i_2'}\dots
a_{i_k'}) =\mu(a _{i_1} \cdots a
_{i_k}),\end{equation}. 

Renumbering the indices, we assume that $i_u = u$ for each $1 \le u
\le k.$ Thus, the upper $k\times k$ corner of~$A$ is the matrix $\diag{(a_i)} + \sum_{i=1}^k a'_ i e_{i,\sigma(i)} +$  other entries, e.g., it   somewhat resembles the
matrix \begin{equation}\label{det54} \left(\begin{matrix} a_{1 } & a_{1,2} & * & \dots & * & * \\
* & a_{2 } & a_{2,3} & \dots & *  & *  \\ * & * & a_{3 } &
\dots & *  & *\\ \vdots & \vdots & \vdots & \ddots & \vdots & \vdots
\\ * & * & * & \dots & a_{k-1 } & a_{k-1,k} \\ a_{k,1} &
* & * & \dots & * & a_{k}
\end{matrix}\right)\end{equation}
where here $a_i' = a_{i,i+1}$ for $i<n$ and $a_n' = a_{n,1}.$ (This is for illustration; actually $a_i'$ could be in any non-diagonal position in the $i$-th row.)

 Replacing the
row $\mathbf v_2$ through by $\frac {a'_{1}}{a_{2}}\mathbf v_2,$ and
then replacing the row $\mathbf v_3$ by $\frac {a_{2}'}{a_{3}}\mathbf
v_3,$ and continuing until~$\mathbf v_k$, we may assume that
$a'_{j-1} = a_{j}$ for $2 \le j \le k-1.$
Also \eqref{track1} implies  $\mu(a'_{k})  = \mu(a_{1} ).$

Next we normalize columns to obtain   $a'_{j-1} = a_{j} = \one$
for each $1<j \le k,$ and $a_{1} = \one.$
 Now, for any
entry $a_{s,t}$, $s \ne t \le k$, such that $\sg^\ell(s) = t,$
we take   the set
$J_{s,t} =\{ s,  {\sigma(s)}, \sigma^2(s)),\sigma^\ell(s)=t\},$ and have
\begin{equation}\label{det8} a_{s,t}   \prod_{i=1}^{\ell-1} a'_i \prod_{i \notin J_{s,t}}a_i = \mu (a_{1}\cdots
a_{s-1}a_{s+1}\cdots a_{k} a_{s,t} ) ,\end{equation}
implying $a_{s,t}  \le_\mu   a_{s} = e$ (since otherwise
\eqref{det8} dominates $a $), and likewise when $\sg^\ell(t) = s,$ we
again have the term

\begin{equation}\label{det7}
a_{s,t}   \prod_{i=1}^{\ell-1} a'_i \prod_{i \notin J_{s,t}}a_i = \mu (a_{1}\cdots
a_{s-1}a_{s+1}\cdots a_{k} a_{s,t} )        ,\end{equation}
implying $a_{s,t}  \le_\mu a_{t}= e$.
 In other words, each entry
in the $j$-th column for the first $k$ vectors has $\mu$-value~$\le e $.

Now, we rearrange the remaining columns $(j>k)$ such that first we
take those $j$ for which $a_{i,j} \ne \zero  $ for some $i\le k$.
Taking that $s\le k$ for which $\mu(a_{s,t}) $ is maximal (for
fixed $t$), we multiply $\mathbf v_{t}$ through by~$\frac
{a_{s,t}}{a_{t }}$; thus, we have $a_{t }  = a_{s,t}
.$ We repeat the argument of the previous paragraph to show that
$a_{t,j}\le_\mu  a_{ j}$ for all $1\le j \le k.$ (Namely, when $\sg^\ell(j) =s$, if
$a_{t,j}
>_\mu a_{j }$ , then letting $J_{s,t} =\{ s,  {\sigma(s)}, \sigma^2(s)),\sigma^\ell(s)=t\},$
$$\begin{aligned}  a_{t,s} \prod_{i \notin J_{s,t}} a_{i} \prod _{i=1}^{\ell-1} a'_{\pi^\ell(j)} >_\mu
   a_{1 }\cdots a_{j -1 }a_{j,j}a_{j+1, j+1}\cdots
a_{s}a_{t}\cdots a_{s+1}\cdots
a_{k},\end{aligned}$$ contrary to assumption on $a;$ if
 $\sg^\ell(s) =j$, then we use the analogous term.)

 We continue in this way, until we reach $k'$ such that, for
all $t>k'$, $a_{i,t} = \zero $ for all $i\le k$.

First assume that $k'<n$, i.e., $A$ is block triangular; then one of the blocks is singular, so by induction its rows are dependent. If the lower right-hand block is singular,  we take the dependence of its rows and put the coefficients
of the upper rows to be $\zero$.
 If the upper left-hand block is singular
then we start with the dependence of its rows
and solve for the coefficients
of the lower rows.

Hence we may assume that $k' = n.$
Now we normalize all columns such that the dominant entries  all have $\mu$-value
$e$. We may replace all entries of $\mu$-value $<e$ by $\zero.$

We want to prove that the sum of the rows is in
$\mathcal A_0^{(n)}.$
 If $k<k'=n$
then we can write down the dependence, so assume that $k = n$.

CASE I.  $(\mcA,\mcA_0)$ is of the first kind.
Since by hypothesis $\one + a = e$ for all tangible $a,$
we can replace all the nonzero entries by $\one.$

If $\mathbf 3 = e$ then the sum of the rows is clearly $e.$

If $\mathbf 3 = \one$,
then the  classical
determinant modulo 2 is 0, so the sum of the rows is in $\mathbf
A_0^{(n)}$, being a sum of $\one$ an even number of times.

CASE II.   $(\mcA,\mcA_0)$ is of the second kind with $e^+=e$.
We choose dominant tracks $\pi$ and $\sg$ with $a_\pi + a_\sigma\in \mcA_0.$

Replacing all  entries other than those specified in \eqref{track1}
by $\zero$ does not affect the $(-)$-determinant (since they do not
further contribute anyway because of the hypothesis), so after reduction we are left
with the matrix
$$I_n + \sum _{i=1}^n a_i'e_{i,\sg(i)},$$
where $a_i' = \one$ for all $i>1.$

First assume that $\sigma$ is even.
If
$n = 2m-1$ is odd then $\sum _{i=1}^m v_{\sg^{2i-1}(i) } +a'_1 \sum _{i=1}^{m-1} v_{\sg^{2i}(i)}  \in \mcV_0.$

If $n =2m$ is even then $({\Det A} _+ , {\Det A} _-) = (\one,a'_1)$
so $a_1' \nablaT \one,$ say $\one+c,\ a'_1+c \in \mcA_0$ for $c \in \tT $. Then
$$\sum _{i=1}^m (v_{\sg^{2i-1}(i) } +c v_{\sg^{2i}(i)} ) \in \mcV_0.$$

When $\sigma$ is odd we reverse the dependence.

$n = 2m-1$ is odd then $({\Det A} _+ , {\Det A} _-) = (\one,a')$
where $a' \nablaT \one,$ say $\one+c, a'_1+c \in \mcA_0$ for $c \in \tT $, so
$\sum _{i=1}^m (v_{\sg^{2i-1}(i) } +c v_{\sg^{2i}(i)} ) \in \mcV_0.$

If $n =2m$ is even then $\one + a'_1 = e,$ so  $\sum _{i=1}^m v_{\sg^{2i-1}(i) } +a' \sum _{i=1}^{m} v_{\sg^{2i}(i)}  \in \mcV_0.$
\end{proof}

\subsubsection{Specific positive Results for Condition  \A{2}}\label{PosA2}$ $

\begin{lemma}\label{sim7}$ $
 Any  pair of tropical type satisfies Properties 6.2-6.5 in
\cite{AGG2}.
 \end{lemma}
\begin{proof}
  Property 6.2 holds by the  property.

Property 6.3 means: $x^\circ = \zero =\zero^\circ $
implies $x = \zero$.

 Property 6.4 follows from the definition of $\mcA_0$-bipotence.

For Property 6.5, write $x  = b^\circ$ and assume that $y \le x ^\circ =
(b^\circ)^\circ = b^\circ.$
 Then $y^\circ \preceq b^\circ.$ If $y+b =y$ then $b^\circ \preceq y^\circ,$ implying
 $b^\circ = y^\circ $. But then $y+b \in \mcA_0$,
  so $x+y =  e^+b = e b
\in \mathcal A^\circ$.
 \end{proof}

Note that \cite[Property~6.5]{AGG2} fails in pairs of the second kind,
taking $x=y=\one,$ but we do have  the following   consolation.

  \begin{lemma}\label{P65}
 For any \efinal\  metatangible pair $(\mathcal A, \mcA_0)$, $a_1^\circ= a_2^\circ$ for $a_i\in \tT$ implies that either  $a_1 \nablaT a_2$ or $a_1^\dag+a_2^\dag\in \mcA_0$.
 \end{lemma}
 \begin{proof}
  If $a:=a_1^\dag + a_2^\dag\in \tT$ then $a +a_1 = a_1^\circ +a_2^\dag = a_2^\circ +a_2^\dag = e^+a_2^\dag = ea_2^\dag =a_2^\circ,$ which reversing steps is $a+a_2.$ Hence $a_1 \nablaT a_2$.
 \end{proof}

\begin{theoalpha}\label{theoalphaQ} Over any   $\mu$-Noetherian semiring
pair of tropical type with a surpassing relation
$\preceq$  which is a PO on $\mathcal A^\circ$,
if  $  A $ is singular then $\mathbf v_1, \dots, \mathbf v_n$
are
dependent. 
\end{theoalpha}
\begin{proof}
 Since  Properties
6.2-6.5 in \cite{AGG2} hold by Lemma~\ref{sim7}, we can follow the
proof of \cite[Theorem 6.9]{AGG2} (``homogeneous balances''). (We
must check in the proof of Subcase~2.2 in \cite[Theorem~6.9]{AGG2},
that \cite[Property~6.2]{AGG2}
 can be modified to $\mcA_0$-bipotence.)
The only place (iii) (\cite[Property 6.4]{AGG2}) is used is in the
computation of the $(-)$-determinant as the sum of products of elements of
$A$, which are each in $\tTz$ when $A \in M_n(\tTz)$.
\end{proof}

 When lacking weak transitivity and $\mu$-Noetherianity, we still want  an explicit
result in a sufficiently well-behaved situation.


 The proof
of Proposition~\ref{pair3} might seem to contain the kernel of a desired
proof of Condition \A{2} for general~$n$, but we already have our
counterexamples for $n=3$. Here is a tantalizing observation, which
also points to what could go wrong.

\begin{proposition}\label{pair4} Over a triple $(\mcA,\mcA_0,(-))$, if  $|A| \in \mathcal A_0$  then
$\sum _{j=1}^n (-)^{i+j} a'_{1,j}v_{i} \in \mathcal A_0.$
\end{proposition}

\begin{proof}
  The first column of $\sum _{j=1}^n (-)^{i+j} a'_{i,j}v_{j}$ is  $|A|$, which is   in $\mathcal A_0$ by hypothesis,
 and the other columns are in~$\mathcal A_0$ because $\sum _{j=1}^n (-)^{i+j}
a'_{1,j}\mathbf v_{i}$ is the $(-)$-determinant of the matrix obtained by
replacing the $j$   column of $A$ by its first column, and by
\cite[Remark~4.5(i)]{IzhakianRowen2008Matrices} this combination is
in $ \mathcal A_0.$\end{proof}

 Unfortunately, we have no assurance  that the $ a'_{1,j}\in \tT$
 when $n>2,$   which fails in the counterexamples of \Eref{impmA2}.
We also have the following ``quasi-periodic'' counterexample even for pairs of the first kind.

\begin{example}
\begin{equation}\label{goodmat4}\left(\begin{matrix} \one & \one & \one & \one \\
\one & \one & \zero & \one  \\ \zero & \one & \one &  \one  \\ \one
& \zero & \one & \one  \\
\end{matrix}\right)\end{equation}
whose $(-)$-determinant is in $\mathcal A_0$ in the truncated semiring
of \cite[Example~3.8(vii)]{Row21} where $\mathbf 5 = \mathbf 6$,
even though the rows are  $\mcA_0$-independent (since the entries of the
tangible linear combinations can only reach~$\mathbf 4$).
  \end{example}

\subsection{Condition  \A{4}}$ $

   Condition \A{4}, being  weaker than Condition \A{3},
 holds in many more situations.

 \begin{theoalpha}\label{A3attempt}
 Condition \A{4} holds for any  metatangible, uniquely negated  pair $(\mathcal A,\mcA_0)$.
 \end{theoalpha}

 \begin{proof}
     Consider  a matrix $A = \left(\begin{matrix}
             a_{1,1} & a_{1,2} & \dots  &  a_{1,n} \\
             a_{2,1} & a_{2,2}   & \dots  &  a_{2,n}\\
\vdots &\vdots &\ddots &\vdots \\
              a_{n,1}  &  a_{n,2}  & \dots  & a_{n,n}  \\
              a_{n+1,1}  &  a_{n+1,2}  & \dots  & a_{n+1,n} \end{matrix}\right).$
Write $A_{i,j}$ for the matrix obtained by removing the $i$ row and $j$ column. By induction, the rows of $A_{i,j}$ are dependent, so $A_{i,j}$ is the sum of suitable matrices, by \Cref{A1partA}.
              \end{proof}
              For example, if $(\mathcal A, \tT,
(-))$  is a semiring pair of the first kind, of tropical type, then Condition \A{2} holds by \Tref{theoalphaR}, and thus so does Condition \A{4}. But this is irrelevant for those hyperfields which  are not of tropical type.

But there is a counterexample.
\begin{example}[The negated power set of a group]\label{hkrashy22}$ $
Take any  abelian group.

\begin{enumerate}
    \item Recall the \textbf{symmetric difference of sets} $S \Delta S' = S\cup S' \setminus (S\cap S').$
Define addition on the power set $\mathcal P(G)$ by putting $S\oplus S' =  S \Delta S'.$ (The zero element is the empty set.)  In  particular, $$g_1 \oplus g_2 = \begin{cases} \emptyset \text{ if } g_1=g_2;\\  \{g_1,g_2\} \text{ if } g_1\ne g_2.
    \end{cases}$$
This addition is associative, since the symmetric difference is associative.
Furthermore $$g(\oplus_{i=1}^n g_i) = g(\{g_1\}\Delta \dots \Delta \{g_n\} )=
 \{gg_1\}\Delta \dots \Delta \{gg_n\} =
\oplus gg_i,$$ for $g,g_i\in G.$ Thus   $\mathcal P(G)$ becomes a $G$-module of characteristic 2 (since $S \oplus S = \emptyset$).

 \item  Suppose $G$ has distinct elements $\one,x.$
The
vectors   $v_1 =( \one,\one)$,
 $v_2 = ( \one,x)$,  $v_3 = ( \zero ,\one)$ are independent, Indeed,
 in order for $(\zero) =\sum a_i v_i$ we must have $a_1=a_2.$
 If $a_1\ne \zero$ then  $\sum a_i v_i = (\zero, \{a_1 , a_1x\} \oplus  \{a_3\}) $, which cannot be $(\zero).$
\end{enumerate}
\end{example}

Thus, we have a counterexample to Condition A3.
However, \Eref{hkrashy22} is not a hyperfield, since the addition in sets is not elementwise!
  \Cref{hkrashy22} does meet the conditions  of \cite{NakR}.
  Varied verifications are given in \S\ref{hy1}
  for Condition \A{4} over a hyperfield, but we do not manage to resolve it over hyperfields.

\begin{rem}
To get a proof of Condition \A{4}, for hyperfields, one can try to solve $x\preceq Ax +b$ and construct
$x_{k+1}$ as
$x_k\le x_{k+1}\preceq A x_k+b$.
On need for that that for all $a'\in \tT$, $a,b\in \mcA$
such that $a'\preceq a\le b$ there exists $b'\in \tT$ such that
$a'\le b'\preceq b$.
This would follow from
for all $a, c\in \tT$, there exists $b\in\tT$
such that $a\le b\preceq a+c$.
that is there exists $b'\in \mcA$ such that $b=a+b'\in\tT $
and $a+b'\preceq a +c$.
\end{rem}

\subsubsection{Condition \A{4} over hyperfields}\label{hy1}$ $

For the remainder of this section, we shall consider hyperfields $\mathcal{H}$.
As noted by Baker and  Zhang \cite{BZ}, in contrast to Example~\ref{impmA2}, the solution of Condition \A{4} for  quotient hyperfields, defined in \Cref{Krs}, follows directly from classical linear algebra,  cf.~Lemma~\ref{krd}.\footnote{There is some overlap with a paper submitted a few months earlier, by \cite{HoJ}, who called this condition  FETVINS.} Let us consider other  hyperfields. We write $a_1-a_2$ for $a_1\boxplus(-a_2),$ in a hyperfield $\mcH.$

Let us check that the   familiar examples in the hyperfield literature all satisfy Condition \A{4}.

 \begin{lemma}\label{tr1}$ $
     \begin{enumerate}
         \item
     $\zero \in \boxplus _{i=1}^n a_i $ if and only if $- a_n\in  \boxplus _{i=1}^{n-1} a_i $.
         \item If $a \boxplus (-a) \supseteq \mcH\setminus \{a\}$ for all $a\ne \zero$, then Condition \A{4} holds.
          \item If $a_1 \boxplus (-a_2) =\mcH\setminus \{a_1, a_2, \zero\}$ for all $a_1\ne -a_2 \ne \zero$, then Condition \A{4}  holds when $|\mcH|\ge 6.$
     \end{enumerate}
 \end{lemma}

\begin{proof}
    (i) By definition.

    (ii) We shall prove the vectors $\mathbf{v}_1, \dots, \mathbf{v}_{n+1}$ are dependent.  We may assume $a_{1,1}=\one.$ If all the other $a_{i,1} =\zero$ we can erase the first column, take $a_1=\zero$ and apply induction. So we may assume that $a_{2,1} \ne \zero$, so multiplying $\mathbf{v}_2$ through by $-a_{2,1}^{-1},$ we may assume that $a_{2,1} =-\one$. We continue the process, and have $\one$ appearing with $-\one$ in each column, so their hypersum with any other nonzero component in that column is $\mcH\setminus \{\pm \one\},$ which contains $\zero.$ (If all other  components in that column are $\zero,$ then we get $\zero \in \mcH \setminus \{\one\} = \one \boxplus \one.)$

   (iii) Take $\zero\ne a_4 \notin \{a_1,a_2,a_3\}$. Note that if $a_1, a_2, a_3$ are pairwise not negatives of each other and nonzero, then $(a_1 \boxplus a_2) \boxplus a_3$ contains $\{a_4, a_3\} \boxplus a_3$ which contains $\mcH \setminus \{ a_3, a_4\}$ and thus (letting $a_4$ vary) contains $\mcH \setminus \{ a_3\}$, whereas likewise
      $a_1 \boxplus (a_2 \boxplus a_3))$ contains $\mcH \setminus \{ a_1\}$, so by associativity  $a_1 \boxplus a_2 \boxplus a_3 =\mcH.$ It follows by a similar argument that the sum of any four nonzero elements is $\mcH.$

      Thus we may assume (after normalizing) that each column has
      two nonzero elements $\one,-\one$ or three equal elements nonzero elements $\one,-\one,a_j$. We assume that the first $k$ columns are of the form  $\one,-\one$. This leaves $(n+1)-(k+1) =n-k$ rows that we can adjust,
        multiply the rows so that each of these columns has $a_j',\pm \one, a_j$ when $a_j' \ne \zero \ne -a_j.$
\end{proof}

This result might seem trivial, but occurs quite frequently, i.e., if $\{a_1,a_2\} \subseteq a_1\boxplus a_2$ for all $a_1 \ne -a_2$, cf.~\cite[Proposition~2]{Mas}. In fact, it covers all of the examples given in \cite{Mas}!

 Here are two non-quotient examples from \cite{Nak}.

\begin{example}\label{hex} In each case $\mcH = G\cup \{\zero\}$ where $G$ is a multiplicative group, and $\zero$ is an absorbing element under multiplication and a neutral element under $\boxplus$.
    \begin{enumerate}
     \item  Hyperaddition is as follows, for all $a_i\in G$:
    \begin{itemize}
        \item      $a\boxplus a = \mcH \setminus \{a\},$
\item   $ a_1 \boxplus a_2=\{a_1 ,a_2\}$. \end{itemize} Condition  \A{4} holds, by Lemma~\ref{tr1}(ii).
        \item  Hyperaddition is as follows, for $a_i\in \mcH$:
        \begin{itemize}
            \item  $ a\boxplus a=\{\zero,
a\},$

 \item  $ a_1 \boxplus a_2=\mcH\setminus \{a_1 ,a_2,\zero \}$,      $\forall  a_1 \ne a_2 \ne \zero $.
        \end{itemize}
         Condition  \A{4} holds, by Lemma~\ref{tr1}(iii).
\end{enumerate}
\end{example}

Next, let us see that Condition \A{4} holds for  the examples in Viro \cite{Vi}. We say a hyperfield $\mathcal{H'}$ is \textbf{subsumed} in a hyperfield $\mathcal{H}$ if  $\mathcal{H'}= \mathcal{H}$ and $a_1 \boxplus' a_2  \subseteq a_1  \boxplus  a_2$ for $a_i \in \mathcal{H'}.$ (\cite{Mas} calls $\mathcal{H}$  an \textbf{augmented} hyperfield.)

\begin{lemma}\label{subs}
 Suppose that a hyperfield $\mathcal{H'}$   is {subsumed} in a hyperfield $\mathcal{H}$.
   \begin{enumerate}    \item  Vectors  which are dependent over  $\mathcal{H'}$, also are dependent over  $\mathcal{H}$.
    \item  If $\mathcal{H'}$ satisfies Condition \A{2}, then $\mathcal{H}$  satisfies Condition \A{2}.
       \item  If $\mathcal{H'}$ satisfies Condition \A{4}, then $\mathcal{H}$  satisfies Condition \A{4}.
   \end{enumerate}
\end{lemma}
 \begin{proof}
(i) $\zero \in \boxplus' a_i v_i \subseteq  \boxplus a_i v_i .$

(ii) and (iii) follow from (i).\end{proof}

\begin{example}\label{exn}$ $
  \begin{enumerate}
    \item Let us modify the triangle hyperfield of Viro \cite{Vi},   to $\mathcal{H} = \mathbb F^+$ for an ordered field $F$, with addition given by $a_1 \boxplus a_2 = \{ |a_1 - a_2|, a_1+a_2 \}$, the positive part of $\{ \pm a_ 1 \pm a_2\}$ since half of these are positive. Hence $\mathcal{H} $ is  identified with the quotient hyperfield $F/ \{\pm \one\}$ of \Cref{theoalphaE}, and thus satisfies Condition \A{4}.

    \item The triangle hyperfield ($\mathbb R^+,$  with $a_1 \boxplus a_2 := \{a \in \mathbb R^+: |a_1-a_2| \le a \le a_1+a_2 $), subsumes (i), so   satisfies Condition \A{4} by Lemma~\ref{subs}.

\item The ultratriangle hyperfield of \cite[\S 5.3]{Vi} satisfies Condition \A{4}, seen by taking logarithms.
 \end{enumerate}
\end{example}

\begin{example} \cite{Mas}, \cite[Theorem 9]{MasM}  $G$ is a multiplicative group, and $\mcH  := (G\times  \{\pm 1\}) \cup \{\zero\},$ where $\zero$ is an absorbing element under multiplication and a neutral element under $\boxplus$.
 \begin{enumerate}
     \item Multiplication is given componentwise.

 \item Hyperaddition is as follows, for $a_j\in G, i_j \in \{\pm 1\}$:
       \begin{enumerate}
           \item $(a_1,i_1)\boxplus (a_2,i_2)=\{(a_1, \pm 1), (a_2,\pm 1)\}$  for $(a_1,i_1)\ne (a_2,i_2) $,
             \item $(a_1,i_1)\boxplus (a_1,i_1)=   \mcH \setminus \{ (a_1,i_1),(a_1,-i_1),\zero\},$
             \item
                $(a_1,i_1)\boxplus (a_1,-i_1)=   \mcH \setminus \{ (a_1,i_1),(a_1,-i_1)\}.$
       \end{enumerate}
 \end{enumerate}

 This again satisfies  Condition \A{4} since for $|G|>2$, give distinct vectors with nonzero coefficients, we can multiply the third vector to make its coefficients differ from the first two vectors, and then take their sum. For $|G|=2$ one needs  a more careful, but easy argument.
\end{example}

\cite{HoJ,Ho} provided many new examples of non-quotient hyperfields, but they all satisfy Condition \A{4}.

We repeat the question of \cite{BZ}, whether Condition \A{4} necessarily holds over  hyperfields. Here is a partial answer.

\begin{theoalpha}
    Suppose
\end{theoalpha}

\subsection{Condition  \A{5}}$ $

Here is a property that will
provide Condition \A{5}.

\begin{theoalpha}\label{theoalphaR} Suppose that $(\mathcal A, \tT,
(-))$  is a semiring pair of the first kind, of tropical type,  and $m \le n$. Suppose that the $m \times n$
matrix $A = (a_{i,j})$, whose rows are $\mathbf v_1, \dots, \mathbf
v_m$, has the property that every $m\times m$ submatrix $A'$
satisfies $\Det{ A' } \in \mcA_0$. Then the vectors $\mathbf v_1, \dots,
\mathbf v_m \in \mathcal A^{(n)}$ are
 dependent.
\end{theoalpha}
 \begin{proof}[Proof of \Cref{theoalphaR}] We
induct on $n$. 
%
For each   $j\le n$  we define $\mathbf v_i^{(*j)}$ to be the vector obtained
by deleting the $j$ entry, and $A^{(*j)}$ to be the submatrix of $A$
obtained by deleting the $j$ column of $A.$ In other words,  the
rows of $A^{(*j)}$ are   $\mathbf v_1^{(*j)}, \dots, \mathbf v_m^{(*j)}.$
By induction, we have a  dependence for $\mathbf v_1^{(*j)}, \dots,
\mathbf v_m^{(*j)};$ i.e., there are $\gamma_{i,j} \in \tTz$ such that
$\sum_{i=1}^m \gamma_{i,j}\mathbf v_i^{(*j)} \in \mathbf
A_0^{(n)}.$ We are done if $\sum _i \gamma_{i,j}a_{i,j} \in \tAz$
for some $j$ (since then $\sum _i \gamma_{i,j}\mathbf v_i \in \mathbf
A_0^{(n)})$, so we may assume for each $j$ that $\sum _i
\gamma_{i,j}a_{i,j} \in \tT$. Pick $i_j$ such that $\sum _i
\gamma_{i,j}a_{i,j} = \gamma_{i_j,j}a_{i_j,j}.$

Since there are at least $m+1$ values of $i_j,$ the pigeonhole
principle says that two are the same. To ease notation, we assume
that $i_{j'} = i_{j''} = 1$ for suitable $j',j''$. Thus,
$\gamma_{1,j'}a_{1,j'}$ dominates all $\gamma_{i,j'}a_{i,j'}$, and
$\gamma_{1,j''}a_{1,j''}$ dominates all $\gamma_{i,j''}a_{i,j''}.$
Let
$$ \gamma_i =
\begin{cases} \gamma_{1,j''}\gamma_{i,j'}& \quad \text{if}\quad \gamma_{1,j''}\gamma_{i,j'}=
 \gamma_{1,j'}\gamma_{i,j''};\\
\gamma_{1,j''}\gamma_{i,j'} + \gamma_{1,j'}\gamma_{i,j''} & \quad
\text{if} \quad \gamma_{1,j''} \gamma_{i,j'} \ne
\gamma_{1,j'}\gamma_{i,j''} .\end{cases}$$

We shall conclude   by proving that \begin{equation}\sum_i \gamma_i
v_i = \sum _i \gamma_{i,j'} v_i + \sum_i \gamma_{i,j''} v_i\in
\mathcal A_0^{(n)}.\end{equation} We need to show that
\begin{equation}\label{check1}\sum_i \gamma_i a_{i,j}  \in \mathcal A_0\end{equation} for each $j$. The
verification of \eqref{check1}
 for $j \ne j', j''$ is immediate since we are
given $\sum_i \gamma_{i,j'} a_{i,j} \in \tAz$ and $\sum_i
\gamma_{i,j''} a_{i,j} \in \mathcal A_0$, implying at once that $\sum_i
\gamma_i a_{i,j} \in \mathcal A_0$. Thus, we need to check \eqref{check1}
for $j = j'$ and $j = j''$; by symmetry, we assume that $j = j'$. By
assumption, $\gamma_{1,j'} a_{1,j'} $ dominates $\gamma_{i,j'}
a_{i,j'} $ for each $i.$ Thus, $\gamma_{1 }  a_{1,j'}=
\gamma_{1,j'}\gamma_{1,j''} a_{1,j'}  $ dominates
$\gamma_{1,j''}\gamma_{i,j'} a_{i,j'} $. On the other hand,
$\gamma_{1,j'}\gamma_{1,j''} a_{1,j'}$ is dominated by $$ \sum _i
\gamma_{1,j'}\gamma_{i,j''} a_{i,j'} = \gamma_{1,j'}\sum _i
\gamma_{i,j''} a_{i,j'} \in \mathcal A_0,$$ by the  dependence for
$v_1^{(j'')}, \dots, v_m^{(j'')};$ so we conclude 
that
$$\sum_i \gamma_i a_{i,j}   =  \gamma_{1,j'}\sum _i
\gamma_{i,j''} a_{i,j'} \in \mathcal A_0,$$ as desired. 
\end{proof}



\begin{corollary}\label{coldep} (See \cite[Theorem 3.4]{IzhakianRowen2009TropicalRank})
under the hypothesis of Theorems~\ref{theoalphaQ}, \ref{theoalphaP}, or \ref{theoalphaR}, any nonsingular $n \times n$ matrix also
has column rank $n$.
\end{corollary}
\begin{proof} Apply the theorems to $\Det{A}$ and  $\Det{A^t}$,
which are the same.
\end{proof}

\subsection{The Jacobi algorithm approach}\label{Jacobi1}$ $

Following \cite{AGG2}, by adding on certain
 natural assumptions we can improve the above results to obtain a ``Jacobi algorithm.''

\subsubsection{Fibers of a modulus, cf.~Definition~\ref{mod1}}$ $

Although \cite[Definition~4.5(v)]{Row21} requires
all elements of $\tT$ to be incomparable under $\preceq$,  this need
not hold with respect to a modulus $\mu.$

\begin{definition}\label{fib0} We fix a modulus $\mu: \mcA\to \tG.$
 The  \textbf{fiber}  $\mcF(g)$ of
$g\in \tG$ is $\{ a \in \mathcal A: \mu  (a) = g\}.$ The $\tT$-\textbf{fiber}  $\mcF_\tT(g)$
of $g\in \tG$ is $  \tT \cap \mcF(g) .$

 The $\tT$- fiber  $\mcF_\tT(g)$ has \textbf{depth 1} if $a_1 \le a_2 \in \mcF_\tT(g)$ implies $a_1=a_2.$
(This is \cite[Property 5.4]{AGG2}.)
\end{definition}

%



%

\begin{rem} When $\tG$ is idempotent, the $\tT$-fibers have an (algebraic) semilattice structure given by
$$\mcF(g) \vee \mcF(h) = \mcF(g+h).$$ (We need idempotence so that $a
\vee a = a.$) \end{rem}

We write $b \ |\! \preceq \!\!|_\mu \  b'$ if $b \preceq  b'$ with
$\mu(b)  = \mu(b') .$
%

 The following paired version of \cite[Property 5.3]{AGG2}  suffices for our needs:
\begin{definition} A  semiring pair $(\mathcal A,\mathcal A_0)$ with a modulus $\mu$ is  $\mu$-\textbf{Noetherian}, if no $\tT$-fibers
have infinite ascending chains in $\tT$ with respect to the natural pre-order $\le$ of \lemref{bip8a}.
\end{definition}

The idea in using ``$\mu$-Noetherian'' is that   in any ascending chain one
passes (if possible) to a leading term under the $\mu$-preorder, and then any
further ascending chain has to remain in that fiber, and thus
terminates.

  \begin{rem}\label{Agg254} $ $
 \begin{enumerate}  \item   \cite[Property 5.4]{AGG2} says for $a_1,a_2\in \tT$ that  $a_1 \ |\! \preceq \!\!|_\mu \  a_2$ if $a_1 \preceq  a_2$ with
$\mu(a_1)  = \mu(a_2) .$
     \item  \cite[Property 5.4]{AGG2} holds for an $\mcA_0$-bipotent pair iff it is $\circ$-reversible (\Dref{circidem}), by \lemref{cb}.
 \end{enumerate}
\end{rem}

\begin{example}
    An easy example where \cite[Property 5.4]{AGG2} fails is the supertropical algebra for $\tTz =\R,$ $\tG_0 = \R^+,$ and $\mu$ the absolute value. Then $(-1)+1 = e= 1+1 = (-1)+(-1)$.
\end{example}

\begin{lemma}\label{lemdep2} For $\tT$ a  group,
 \cite[Property
5.5]{AGG2}  holds with respect to
a modulus $\mu$.
\end{lemma}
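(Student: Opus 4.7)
The plan is to transfer Property~5.5 of \cite{AGG2} into our setting by exploiting the fact that, since $\tT$ is a group, the restriction $\mu|_\tT$ lands in the invertible elements of $\tG$ and is itself a multiplicative group homomorphism. First I would rewrite Property~5.5 in our notation; in the surrounding discussion it concerns how the $\tT$-fiber $\mcF_\tT(g)$ behaves under tangible scaling, and in particular the possibility of normalizing a comparison $a_1 \le_\mu a_2$ to a comparison involving $\one$.

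Next I would observe that, because $\mu$ is a weak modulus (hence an nd-semiring homomorphism on the multiplicative structure), $\mu(a^{-1}) = \mu(a)^{-1}$ for every $a \in \tT$. Combined with Definition~\ref{modu-order}(ii),(iii) and Remark~\ref{modu-order2}, left (or right) multiplication by any $a \in \tT$ is an order isomorphism with respect to $\le_\mu$, and sends $\mcF_\tT(g)$ bijectively onto $\mcF_\tT(\mu(a)g)$. Thus every $\tT$-fiber is a tangible translate of the distinguished fiber $\mcF_\tT(\one_\tG)$, and it suffices to verify Property~5.5 there; given any $a_1, a_2$ with $\mu(a_1) \le \mu(a_2)$, we replace them by $\one$ and $a_2 a_1^{-1}$, apply the AGG2 argument at the base fiber, then multiply back by $a_1$ to recover the desired conclusion.

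Finally I would check that this normalization trick is compatible with whatever auxiliary relation enters Property~5.5. Since $\preceq$ and $\nabla$ are preserved by multiplication by units (for $\preceq$ this uses Lemma~\ref{ws1}, and for $\nabla$ it follows by translating the witnessing $a \in \tTz$ through $a_1^{-1}$), left/right multiplication by invertible tangibles preserves all the comparisons at stake. The conclusion of Property~5.5 then follows by direct translation of the AGG2 proof.

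The step I expect to be the main obstacle is the last one: verifying that the precise form of Property~5.5 from \cite{AGG2} survives our weaker axioms (Property~N in place of a negation map, and a modulus that need not be a valuation), so that the $\tT$-equivariant reduction to the unit fiber really does import the AGG2 argument without hidden use of negation. Once the compatibility of $\preceq$ (or $\nabla$) with unit multiplication is isolated as above, the rest is a formal translation.
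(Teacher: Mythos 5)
Your proposal never actually pins down what \cite[Property 5.5]{AGG2} asserts, and the guess you build on (that it is a statement about how the $\tT$-fibers $\mcF_\tT(g)$ transform under tangible scaling, requiring an equivariant reduction to the unit fiber) does not match what the lemma needs. The paper's proof is a single line: ``Take $\tilde d = d^{-1}$ for $d \in \tT$.'' In other words, Property~5.5 is an \emph{existence} statement --- for each tangible $d$ one must produce an element $\tilde d$ playing the role of a multiplicative quasi-inverse in the Jacobi iteration --- and the hypothesis that $\tT$ is a group discharges it immediately by taking the actual inverse. There is no order-theoretic content to verify, no auxiliary relation ($\preceq$ or $\nabla$) whose compatibility with unit multiplication needs checking, and no ``AGG2 argument at the base fiber'' to import.

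The concrete gap is that your proof never closes: every substantive step is deferred to ``apply the AGG2 argument'' or ``the conclusion follows by direct translation of the AGG2 proof,'' so nothing is actually verified. Since the entire point of the lemma is to check that a specific property from \cite{AGG2} holds in the present setting, a proof whose content is ``the property holds because the AGG2 proof translates'' is circular. Your intermediate observations --- that $\mu(a^{-1}) = \mu(a)^{-1}$ and that multiplication by a tangible unit is an order isomorphism for $\le_\mu$ --- are correct and reflect the right intuition (the group structure of $\tT$ is indeed what makes the property hold), but they are scaffolding for an argument that is never carried out and, as it turns out, is not the argument required. You should have first located the precise statement of Property~5.5; had you done so, the one-line proof would have been apparent.
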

\begin{proof} Take $\tilde d = d^{-1}$ for $d \in \tT.$
\end{proof}

\begin{rem}\label{Sec6}$ $

 \begin{enumerate}
\item \cite[Property~6.2]{AGG2} holds for metatangible pairs of the second kind.


 \item \cite[Property~6.3]{AGG2} is a running assumption.

\end{enumerate}
\end{rem}

Since our main results will be formulated for   pairs, let us
compare this to \cite[Property~4.7]{AGG2}.

\begin{rem}\label{bipo}
 \cite[Property~6.5]{AGG2} fails if  $e^+ =
 \one$ (taking $x = e$ and $y = \one$). (Similarly it fails
 whenever $e^+ = \mathbf {2k+1}$ with $ \mathbf {2k} \ne  \mathbf
 {2k+1}.$) But any metatangible pair satisfying \cite[Property~6.5]{AGG2}
 is   $\mcA_0$-bipotent, by Theorem~\ref{theoalphaA}.\end{rem}
%

\subsubsection{The Jacobi algorithm}\label{matra03}$ $

We are ready for another way of obtaining Cramer's rule.

 We make use of a property, which we call \textbf{modular descent}, which is a translation of
 \cite[Property 5.2]{AGG2} (Fibers are defined below in \Cref{fib0}):
 \begin{equation} \label{goodprop} \text{If } a \in \tTz \text{ and } a \le c, \text{ then  }\mcF_\tT(\mu (c))
  \text{ contains some } a'\in \tT \text{ with }
 a  \le  a' \le c.
\end{equation}
\medskip

\begin{lemma}[Jacobi decomposition]\label{Jacobi} Suppose the
$\preceq_{\zero}$-  $\tT$-semiring   triple  $(\mathcal A,
\mathcal A_0,(-))$ has a
$\tT$-modulus $\mu$ satisfying~\eqref{goodprop}. Then   any  
nonsingular matrix $A \in M_n(\mathcal A)$ with a dominant diagonal
has a \textbf{Jacobi decomposition} $A = D + N$ of matrices~$D$ and
$N \in M_n(\mathcal A)$ such that $D$ is a diagonal matrix with
entries in $\tTz$, and $ \mu( D) =  \mu(A)$.  \end{lemma}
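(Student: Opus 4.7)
The plan is to construct $D$ diagonally by extracting, from each $a_{i,i}$, a tangible element of the same modulus, then take $N$ to absorb what remains so that $A = D + N$ holds on the nose. First I would observe that, since the diagonal track $a_{(1)} = a_{1,1}\cdots a_{n,n}$ is dominant and $A$ is strictly nonsingular, every $a_{i,i}$ must be nonzero: otherwise the diagonal track would vanish and could not be the (strictly) dominant track among all $n!$ tracks of $A$. This is the foothold that lets the extraction step start from something nontrivial at each diagonal slot.

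Next, fix $i$ and produce $d_i \in \tTz$ with $\mu(d_i) = \mu(a_{i,i})$. Because $\mcA$ is admissible, $a_{i,i}$ is a $\tT$-sum, so there is at least one tangible summand $a_0 \in \tT$ with $a_0 \le a_{i,i}$ under the natural pre-order of \lemref{bip8a}. Applying the modular descent property~\eqref{goodprop} to $a = a_0$ and $c = a_{i,i}$ produces $d_i \in \tT \cap \mcF(\mu(a_{i,i}))$ with $a_0 \le d_i \le a_{i,i}$. (If some $a_{i,i} = \zero$, put $d_i = \zero$, though this case is ruled out above.) Define $D := \diag(d_1, \ldots, d_n)$; by construction $d_i \in \tTz$ for every $i$ and $\mu(d_i) = \mu(a_{i,i})$. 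Then I would define $N$ by $n_{i,j} = a_{i,j}$ for $i \ne j$ and, on the diagonal, choose $n_{i,i}\in \mcA$ so that $d_i + n_{i,i} = a_{i,i}$: the relation $d_i \preceq_{\zero} a_{i,i}$ (which holds since $d_i \le a_{i,i}$ in the natural pre-order) guarantees such an $n_{i,i}$ exists, and in the $\mcA_0$-bipotent case one may simply take $n_{i,i} = a_{i,i}$ because $d_i + a_{i,i} = a_{i,i}$. By construction $A = D + N$.

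Finally, I would verify the modulus identity. Since $D$ is diagonal, its determinant reduces to the single diagonal track $d_1 \cdots d_n$; multiplicativity of $\mu$ gives $\mu(|D|) = \prod_i \mu(d_i) = \prod_i \mu(a_{i,i}) = \mu(a_{(1)})$. The dominant-diagonal hypothesis together with \lemref{dom9} yields $\mu(|A|) = \mu(a_{(1)})$, so $\mu(D) = \mu(A)$ in the intended sense.

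The hard part will be the tangible extraction step: we need an initial tangible lower bound $a_0$ whose inequality $a_0 \le a_{i,i}$ is legitimate in the precise pre-order used by~\eqref{goodprop}. In the metatangible or $\mcA_0$-bipotent setting (which is where the Jacobi algorithm is actually used in \cite{AGG2}) this is immediate from the uniform presentation of Theorem~\ref{met1} and Lemma~\ref{bip8a}; in the fully general $\preceq_{\zero}$-pair of the lemma statement one may need to reinterpret ``$a \le c$'' in~\eqref{goodprop} as ``$a$ is a summand of $c$'' so that admissibility alone suffices. Once the extraction lemma is pinned down, the rest of the decomposition is essentially a bookkeeping exercise.
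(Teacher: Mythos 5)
The paper itself gives no argument here: its entire proof of Lemma~\ref{Jacobi} is the single line ``As in \cite[Proposition~5.18]{AGG2}'', so your explicit construction is doing work the paper delegates to the reference. Your overall strategy --- use modular descent \eqref{goodprop} to extract from each diagonal entry $a_{i,i}$ a tangible $d_i$ in the fiber $\mcF(\mu(a_{i,i}))$, set $D=\diag(d_1,\dots,d_n)$, put the remainder into $N$, and then compute $\mu(|D|)=\mu(a_{(1)})=\mu(|A|)$ from the dominant-diagonal hypothesis --- is the right idea and matches what the cited result does in the setting of \cite{AGG2}. Your observation that strict nonsingularity forces the $a_{i,i}$ to be nonzero, and the final modulus computation, are fine.

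There are, however, two genuine gaps in the general $\preceq_{\zero}$-pair setting of the lemma as stated, both of which you partially flag yourself but do not close. First, to invoke \eqref{goodprop} you need a tangible $a_0$ with $a_0\le a_{i,i}$ in the pre-order actually used there, and you produce $a_0$ as a tangible \emph{summand} of $a_{i,i}$; but in the natural pre-order of Lemma~\ref{bip8a} (or the Green pre-order of Remark~\ref{Gre}), $a_0\le c$ means $a_0^\circ=c^\circ$ or $a_0+c=c$, and neither follows from $c=a_0+b$. For instance in $\mcA=\Net$ with $\tT=\{\one\}$ one has $\mathbf 3=\mathbf 1+\mathbf 2$, yet $\mathbf 1+\mathbf 3\ne\mathbf 3$ and $\mathbf 1^\circ\ne\mathbf 3^\circ$, so the summand $\mathbf 1$ is not $\le\mathbf 3$. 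Second, $d_i\le a_{i,i}$ does not imply $d_i\preceq_{\zero} a_{i,i}$ --- the natural pre-order and $\preceq_{\zero}$ are distinct relations --- and the additive complement $n_{i,i}$ with $d_i+n_{i,i}=a_{i,i}$ is only immediate in the branch $d_i+a_{i,i}=a_{i,i}$ of the pre-order; in the branch $d_i^\circ=a_{i,i}^\circ$ you get no decomposition at all. Both gaps close in the $\mcA_0$-bipotent and tropical-type settings in which Theorem~\ref{Cram} is actually applied (there $d_i+a_{i,i}=a_{i,i}$ holds and one simply takes $n_{i,i}=a_{i,i}$), and they also close under your proposed reinterpretation of ``$a\le c$'' in \eqref{goodprop} as ``$a$ is a summand of $c$''; but as written the argument does not cover an arbitrary pair with a modulus satisfying \eqref{goodprop}, so the extraction step needs to be pinned down exactly as you suspect.
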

\begin{proof} As in \cite[Proposition~5.18]{AGG2}.
\end{proof}

We look for a solution to the matrix equation $A x \succeq v,$ for a
given matrix $A$ and vector $\mathbf v$. (This strengthens
\cite[Theorem~5.20(2)]{AGG2}.)

\begin{proposition}
    [The Jacobi algorithm]\label{theoalphaN} The ``Jacobi algorithm'' of
 \cite{AGG2} applies to any  $\mu$-Noetherian
pair $\mathcal A$ satisfying \eqref{goodprop}, and a 
nonsingular matrix $A \in M_n(\mathcal A )$ having a dominant
diagonal. Explicitly, let $A = D + N$ be a Jacobi decomposition.
Then
\begin{enumerate}
 \item
There is an ascending sequence $\mathbf x_1, \mathbf x_2, \dots,$ for
$\mathbf x_k \in \tTz^{(n)}$ (with respect to the natural pre-order~$\le$),
satisfying $D \mathbf x_{k+1}\, |\! \preceq \!\!| _\mu  \, (N\mathbf x_k
+ v)$.

 \item When $\mathcal A$  satisfies \eqref{goodprop}, the  sequence $\mu(\mathbf x_1), \mu(\mathbf x_2), \dots,$  stabilizes
at $n$, i.e.,   $\mathbf x_k = \mathbf x_{k+1}  $ for all $k \ge n$,
    and moreover  $\mu(\mathbf x_n) =
\mu( |A|)^{-1}\mu(\adj{A} \mathbf v)$ when   $\mu(|A|)$ is invertible.

 \item When $\mathcal A$  satisfies \cite[Property
5.2]{AGG2} and is $\mu$-Noetherian, the  sequence $\mathbf x_1, \mathbf x_2,
\dots,$ stabilizes, i.e., there is $m$ such that $\mathbf x_k = \mathbf
x_{k+1} $ for all $k \ge m$  and,
 moreover,
the limit $\mathbf x_m$ is a tangible solution of $A\mathbf x \nabla
v$.

 \item When $\tT$-fibers have depth 1, one can take $m = n$ in (iii).
\end{enumerate}
\end{proposition}
\begin{proof}
(i) Follows from \cite[Equation~5.6]{AGG2}.

(ii)   We copy the proof of \cite[Theorem~5.20(2)]{AGG2}, which
 utilizes ascending chains, and requires  the $\mu$-Noetherian
property for chains to stabilize.

(iii) We get
 \cite[Property
5.5]{AGG2} from  Lemma~\ref{lemdep2}.

(iv) When $\tT$-fibers have depth 1, $x_n = x_{n+1}$ in (iii).
\end{proof}

 \begin{theoalpha} [{see \cite[Theorem 5.11]{AGG2}}]\label{A1exdist} Suppose that
 $(\mathcal A,\mcA_0)$ is a
 $\mu$-Noetherian pair  with a  $\tT$-modulus~$\mu$,
  satisfying~\eqref{goodprop} (which is ``convergence of monotone algorithms''  of \cite{AGG2}). 
 For any 
nonsingular matrix $A$ with $\mu (|A|)$ invertible, and
  every $\mathbf v \in \mathcal V $, there exists   $ \mathbf x\in
\tTz^{(n)}$ with  $\mu (\mathbf x) = \mu(|A|)^{-1}\mu(\adj{A}\mathbf
v).$
\end{theoalpha}
\begin{proof} As in the proof of {\cite[Theorem 5.11]{AGG2}}. (Applying a row permutation, we may assume that $A$ has a dominant
diagonal.)
\end{proof}

\begin{rem}\label{A1exdist2}
Proposition~\ref{theoalphaN} and Theorem~\ref{A1exdist} apply most readily
when every nonzero element of $\tT$ is invertible. But then one can
extend this to (cancellative) $\tT$-bimodules
 by taking fractions.
\end{rem}

 \begin{definition}
    The \textbf{tangible summand condition} is that if $b_1+b_2\in \tT$ then $b_1 \in \tT$ or $b_2 \in \tT$.
\end{definition}

 \begin{corollary} [{see \cite[Theorem 5.14]{AGG2}}] Condition \A{3} holds
 under the tangible summand condition,
 when  $\mathcal A$ is a
 $\mu$-Noetherian pair with respect to a $\tT$-modulus $\mu$ and the natural pre-order $\le$,
  satisfying~\eqref{goodprop}.
\end{corollary}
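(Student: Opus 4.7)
The plan is to prove the contrapositive of Condition $\operatorname{A1}$: whenever an $n\times n$ submatrix $A$ satisfies $|A|\notin \mathcal{A}_0$, the $n$ rows of the ambient matrix that supply $A$ are independent. So I suppose the rows $v_1,\dots,v_n$ admit a tangible combination $\sum c_i v_i \in \mathbf{v}_0$ with $c=(c_i)\in \tT^n$ not all zero. Restricting to the columns that form $A$ yields $A^T c\in \mathcal{A}_0^{(n)}$, i.e., $A^T c\,\nabla\,\zero$, and the goal is to derive $|A|\in \mathcal{A}_0$ and hence a contradiction.

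First I would apply Corollary~\ref{A1exdist} together with Theorem~\ref{Cram} to $A^T$, after permuting rows (by the standard optimal-assignment argument) to ensure a dominant diagonal. The Noetherian hypothesis combined with the modulus $\mu$ and modular descent \eqref{goodprop} produces a tangible $x\in \tTz^{(n)}$ satisfying $A^T x\,\nabla\,\zero$, along with the modulus identity $\mu(x)=\mu(|A^T|)^{-1}\mu(\adj(A^T)\cdot\zero)$. The crucial by-product of running the Jacobi algorithm here is the Cramer-type relation $|A^T|\,I\,\nabla\,\adj(A^T)\cdot A^T$, which replaces the weak-balance-elimination derivation used in Theorem~\ref{A1th}. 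When a native negation map is absent, I would obtain this relation by first passing to the doubled pair $(\widehat{\mathcal{A}},\widehat{\mathcal{A}_0})$ via Theorem~\ref{doubl1}, applying the doubled generalized Laplace identity (Theorem~\ref{Lap1} and its corollary) there, and then descending back.

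With the Cramer relation in place, multiplying $A^T c\in \mathcal{A}_0^{(n)}$ on the left by $\adj(A^T)$ keeps the result in $\mathcal{A}_0^{(n)}$ by the closure of $\mathcal{A}_0$ under $\mathcal{A}$-multiplication (Note~\ref{Note1}(ii)), and then $|A^T|c\,\nabla\,\zero$ forces every component $|A^T|c_i\in \mathcal{A}_0$. Picking an $i$ with $c_i\in \tT\setminus\{\zero\}$ and invoking the running hypothesis of Note~\ref{Note1}(iv), I conclude $|A|=|A^T|\in \mathcal{A}_0$, the desired contradiction.

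The main obstacle will be the Cramer step. In the ambient hypotheses we have neither weak balance elimination (the tool behind Theorem~\ref{A1th}) nor a negation map on $(\mathcal{A},\mathcal{A}_0)$ itself; the role of \eqref{goodprop} together with the Noetherian property is precisely to force the Jacobi iterates to terminate inside $\tTz^{(n)}$ in the correct $\mu$-fibers, which is what lets the combinatorial identity $|A|\,I\,\nabla\,\adj A\cdot A$ survive the passage to the pair. Once this technical delicacy is handled (either by routing through doubling or by reading the relation off the algorithmic output), the remaining deduction is the formal Cramer-to-independence argument, parallel to the short proof given following Theorem~\ref{wbe}.
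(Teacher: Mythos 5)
Your argument has a genuine gap at exactly the step you flag as "the main obstacle," and the device you propose to close it does not do the job. The relation $|A|I\,\nabla\,\adj A\cdot A$ is not a by-product of the Jacobi algorithm: it is the purely combinatorial adjoint identity (proved by the transfer principle / bijective pairing of tracks) and holds over any negated commutative semiring pair regardless of Noetherianity or modular descent. What your deduction actually needs is the \emph{elimination} step: from $\adj(A^T)\bigl(A^T c\bigr)\in\mathcal A_0^{(n)}$ together with $\adj(A^T)A^T\,\nabla\,|A^T|I$, conclude $|A^T|c\in\mathcal A_0^{(n)}$. That inference is precisely weak balance elimination (it is how Theorem~\ref{A1th} and its corollary proceed), and weak balance elimination is the hypothesis this corollary deliberately omits. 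The Noetherian-plus-modular-descent hypotheses are there to \emph{replace} it, not to re-derive it: following \cite[Theorem 5.14]{AGG2}, one uses the Jacobi iteration to produce a \emph{tangible} vector in the correct $\mu$-fibers and exploits that tangibility (together with the tangible summand hypothesis and Theorem~\ref{A1th}) to carry out the Cramer comparison without ever invoking balance elimination on non-tangible entries. Your proof never uses the tangibility output of Theorem~\ref{Cram} for anything; you only extract the adjoint identity, so the elimination gap remains open.

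There are two further problems with the way you invoke the Jacobi machinery. First, Corollary~\ref{A1exdist} and Theorem~\ref{Cram} are stated for a \emph{strictly nonsingular} matrix with $\mu(|A|)$ invertible and a dominant diagonal; in the situation at hand you cannot assume strict nonsingularity (that is essentially what is to be decided), and applying them with right-hand side $\mathbf v=\zero$ only returns the trivial solution $x=\zero$, which tells you nothing about the given nontrivial dependence $c$. Second, the uniqueness of tangible solutions (which is what would let you argue "$c$ must agree with the Cramer vector") is Theorem~\ref{A1th1} and requires metatangibility with a negation map, again not among the present hypotheses. The paper's own proof is a one-line deferral to \cite[Theorem 5.14]{AGG2}, using Theorem~\ref{A1th} to supply that theorem's hypothesis; to repair your write-up you would need to actually reproduce that argument rather than substitute the adjoint identity for it.
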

\begin{proof} As in  {\cite[Theorem 5.14]{AGG2}}, utilizing
Proposition~\ref{theoalphaN} to obtain its condition.
\end{proof}

\section{$\preceq$-Dependence and spanning}\label{sd}$ $

One can recast the theory of pairs to include a pre-surpassing relation. We assume throughout that $(\mcA,\mcA_0)$ is a  pair with a pre-surpassing relation  $\preceq$.

Although the approach of $\mcA_0$-dependence used in the body of this paper yields the closest connection to matrices, there is another notion of dependence, which sometimes ties in better to the structure theory,
relying on the following mainstay of \cite{Row21}.

\begin{definition}\label{deps} Let   $V=\mathcal A^{(n)}, $ and $V_0 = \mathbf A^{(n)}_0.$
\begin{enumerate}
  \item
The pre-surpassing relation
 $\preceq$ is defined componentwise on vectors in $V$, i.e., $(v_i) \preceq (v_i')$ if $v_i \preceq v_i'$ for each $i$.

  \item A $\preceq$-\textbf{morphism} $f:(V,V_0)\to (W,W_0)$ of vector spaces over a pair   with a surpassing relation is a
        multiplicative map also satisfying
        \begin{itemize}
      \item If $\bfv \preceq \bfv'$, then $f(\bfv)\preceq f(\bfv').$
      \item  $f(\sum \bfv_i)\preceq  \sum f(\bfv_i),$ for all $\bfv_i\in V.$
 \end{itemize}

    \item Vectors $\mathbf v_i, {i\in I},$ are  \textbf{strongly $\preceq$-independent} if they satisfy the property:\medskip

If a finite sum $\sum_{i\in I} b_i \mathbf v_i \preceq \sum_{i\in I} b_i' \mathbf v_i$ for $b_i,b_i'\in\mcA$,  then $b_i\preceq b_i'$ for each $i\in I.$
\end{enumerate}
\end{definition}

We defined $\mcA_0$-dependence of vectors $\mathbf v_i$ in Definition~\ref{dep1}.

\begin{example}\label{metex2}$ $\begin{enumerate}
                 \item $(\one,\zero)$ and $(\zero,\one)$ are strongly  $\preceq$- independent. Indeed, $b_1(\one,\zero)+b_2(\zero,\one) \preceq b_1'(\one,\zero)+b_2'(\zero,\one)$
                 implies $b_1\preceq b_1'$  and $b_2\preceq b_2',$ seen by checking the first and second components respectively. In general the standard base $\{\mathbf e_1,\dots, \mathbf  e_n\}$ is strongly $\preceq$-independent.
              \item $(\one,\zero)$ and $(\one,\one)$ are $\mcA_0$-independent, but are not
             strongly  $\preceq$-independent if  $a+a'=a'$ for some $a,a' \in \tT$ (cf.~\lemref{bA2}),
                 since then
                 $a(\one,\zero) +a'(\one,\one)= (a',a')=a'(\one,\one).$

               \end{enumerate}
\end{example}

Thus strong $\preceq$-independence is perhaps too strong a condition. One can use a more lenient  approach for spanning.
\begin{definition}\label{depsp}$ $
\begin{enumerate}
\item A vector $\mathbf v$ is $\preceq$-\textbf{spanned} by vectors $\mathbf v_1,
\dots, \mathbf v_n$ if $\mathbf v  \preceq  \sum_{i=1}^n a_i \mathbf v_i $
for suitable $a_i \in \tTz$.
A subspace $W \subseteq V$ $\preceq$-\textbf{spans}  $V$ if each $\mathbf v\in V$ is $\preceq$-{spanned} by elements of $W.$

\item A $\preceq$-\textbf{spanning} set for $W \subseteq V$ is a set of
vectors $\{ \mathbf v_i : i \in I\}$ $\preceq$-spanning each
$\mathbf w$ in $W$. (Note that we do not require that the $\mathbf v_i$
are in $W$.)

\item A subset $S\subseteq V$ is $\preceq$-\textbf{independent} if $S\setminus v$ does not $\preceq$-\textbf{span} $v,$ for any $v\in V.$
\end{enumerate}
\end{definition}

\begin{rem}
    $\preceq$-spanning has the advantage of being transitive,   whereas  $\mcA_0$-dependence is not transitive.
 \end{rem}

%
%

\begin{proposition}\label{firstc37}  Suppose that $\mathbf v_n$ is $\preceq$-{spanned} by vectors $\mathbf v_1,
\dots, \mathbf v_{n-1}$.
\begin{enumerate}
    \item
When $(\mcA,\mcA_0)$ satisfies \PropertyN,  then $\mathbf v_1,
\dots, \mathbf v_{n}$ are
dependent.
    \item For $\mathbf v_i$ of length $n,$
 the matrix $A$ whose rows are $\mathbf v_1, \dots, \mathbf v_n$ is singular.
\end{enumerate}\end{proposition}
\begin{proof}
(i) $  \mathbf v_n \preceq \sum _{i=1}^{n-1} a_i \mathbf v_i
$ implies $  \mathbf v_n +\sum _{i=1}^{n-1} a_i^\dag \mathbf v_i
\in \mcV_0$ when $a_i+a_i^\dag\in \mcA_0$.

(ii) This is easy when $ \mathbf v_n =  \sum _{i=1}^ {n-1} a_i \mathbf v_i, $ seen by breaking up $\mathbf v_n$ into its summands, so the assertion follows by applying Remark~\ref{modu-order2} to the determinantal formula.
\end{proof}
Thus  the minimal
number of $\preceq$-spanning rows of a matrix is at least the row rank and the submatrix rank.
We can also improve Proposition~\ref{firstc37a}, using the surpassing relation $\preceqzero$.

\begin{theoalpha}\label{firstc37b}  Assume that triple $(\mcA, \mathcal A _0,(-))$  with $\mcA = \tT \cup \mcA_0$.
     Suppose the tangible vectors $\bfv_1, \dots, \bfv_{n+1}$ are the rows of an $(n+1)\times n$ matrix $A$. Then either $A$ has a $\nabla$-singular $n\times n$ submatrix or each vector is $\preceqzero$-spanned by the others.
\end{theoalpha}
\begin{proof}
    We follow the proof of  \Cref{firstc37a}, which says that $\bfv_{n+1}$ is $\preceqzero$-spanned by $\bfv_1, \dots, \bfv_{n},$ and this works for every $\bfv_i.$
\end{proof}
\begin{example}
    The matrix
\begin{equation}\label{impma} \left(\begin{matrix}
                \one & \zero &  \one^\circ\\
               \zero &  \one^\circ & \one \\ \one^\circ & \one & \zero
               \end{matrix}\right), \end{equation}
over the super-Boolean pair, has  row  rank 2, but no row is $\preceq$-spanned by the other two rows.

However, this matrix is not tangible.
\end{example}

There also is a $\preceq$-version of \Cref{theoalphaJ10} with the same proof:

\begin{theoalpha}[Cramer's rule with $\preceq$, Part 1]\label{theoalphaJ11}
 Let $(\mathcal A,
\mathcal A_0,(-))$  be a $\tT$-semiring \triple with a pre-surpassing relation $\preceq$.
Then, for any vector $\mathbf v$,
the vector $\mathbf w  = {\adj A\mathbf v }$ satisfies $|A|\mathbf v \, \preceq \, A \mathbf w.$
\end{theoalpha}


One troublesome feature of tropical geometry is that there are
``too many'' vector subspaces; for example over the max-plus algebra $\mathbb{Q}$ one has the semiplanes spanned by $(0,0)$ and $(0,a)$ for every $a\in \mathbb Q,$ so ACC and DCC fail for sub-spaces.

\appendix\section{Categories arising in this paper}\label{cat1}$ $

Let us formulate the theory of categories whose objects are pairs. In the class discussed in this paper, which we call \textbf{Mon-categories},  the underlying monoid $\tT$ is not fixed. 





 \subsection{Doubling}

\begin{theoalpha}\label{theoalphaF}
The doubling procedure is functorial, from $\tT$-modules to pairs.
\end{theoalpha}
\begin{proof}
The weak morphisms of the doubled category are pairs $\hat f =(f_0,f_1):(\widehat{\mcA},\widehat{\mcA_0})\to (\widehat{\mcA'},\widehat{\mcA'})$, where $f_0,f_1:\mcA \to \mcA'$ are weak morphisms, and we define  $$\hat f(b_0,b_1) = (f_0(b_0)+f_1(b_1),f_1(b_0)+ f_0(b_1)).$$

If $\sum (b_{0,i}b_{1,i}) \in \widehat{\mcA_0} $, then $\sum b_{0,i}+\sum b_{1,i}\in \mcA_0,$  so $\sum f_j(b_{0,i})+\sum f_j(b_{1,i})\in \mcA_0,$ implying
\begin{equation}
    \begin{aligned}
        \sum \hat{f}(b_{0,i}b_{1,i}) & = \left(\sum f_0(b_{0,i})+ \sum f_1(b_{1,i}),\sum f_0(b_{1,i})+\sum f_1(b_{0,i})\right)\in \mcA_0.
    \end{aligned}
\end{equation}

Given a morphism $f:(\mcA,\mcA_0) \to (\mcA',\mcA'),$
        we define $\hat f:(\widehat{\mcA},\widehat{\mcA_0})\to (\widehat{\mcA'},\widehat{\mcA'})$, by putting
        $\hat f = (f,f).$

The rest is routine, as in the proofs of \cite[Theorem~4.2(iii),(iv)]{AGR2}.
        \end{proof}

\begin{rem}
    Although one can double module morphisms, one has difficulties in doubling multiplicative maps in the category of semiring pairs, since one needs to compare $$f((b_0,b_1)(c_0,c_1))= f(b_0 c_0 +b_1c_1, b_0 c_1 +b_1c_0)$$ with $$(f(b_0)f(c_0)+f(b_1)f(c_1),f(b_0)f(c_1)+f(b_1)f(c_0)),$$ which need not be equal if $f$ is not an additive homomorphism. \end{rem}




\begin{definition}\label{pmor}
  A \textbf{$\preceq$-morphism}
    from a    bimodule $\mcA$ over~$\tT$  with a pre-surpassing relation $\preceq$ to  a $\tT'$-bimodule $\mcA'$  with a pre-surpassing relation  $\preceq'$, is a weak morphism
    satisfying, for all $b_i \in \mcA,$\begin{enumerate}
\item \label{precm}
$ f( b_1 +b_2) \preceq'  f(b_1)+f(b_2), \ \forall b_i \in \mcA,$
\item If $b_1\preceq b_2$ then  $f(b_1) \preceq' f(b_2).$
  \end{enumerate}
       \end{definition}
\begin{lemma}  $f(a^\circ) \preceq 'f(a)^\circ$, for
     any $\preceq$-morphism to a pair $(\mcA',\mcA_0')$ with \PropertyN\ satisfying $\mcA_0' = \mcA'_\Null$.
\end{lemma}
\begin{proof}
    $f(a+a^\dag) \succeq' f(\zero) = \zero,$ so $ f(a)+f(a^\dag) \succeq 'f(a+a^\dag) \succeq\zero,$ so by \PropertyN, $ f(a)+f(a^\dag) = f(a)^\circ$, yielding  $f(a^\circ) \preceq f(a)^\circ.$
\end{proof}
A $\subseteq$-morphism of a hyperpair is just a $\subseteq$-{hypermorphism}.

\subsection{Direct products}$ $

\begin{example}\label{dirpdt}
   Given \admissible\ $\tT_i$-semirings $\mathcal A_i$ for $i \in I$ we can form their categorical product, the \textbf{direct product} $\prod _{i \in I}\, \mathcal
A_i$  as usual, with $\preceq$ defined componentwise. This is denoted
$\mathcal A^{I}$ when each $\mathcal A_i = \mathcal A.$

 When the   $I$  is
 finite, and $(\mathcal A_i, {\mathcal {A}_i}_0)$ are \admissible\ pairs, then
$(\prod \mathcal A_i,\prod {\mathcal {A}_i}_0)$
 is an \admissible\ pair.
\end{example}
\begin{lemma}
   If each $(\mathcal A_\ell, {\mathcal A_\ell}_0)$ has \PropertyN, resp.~is negated, then    $\prod_{\ell \in I}\, \mathcal
(\mathcal A_\ell, {\mathcal A_\ell}_0)$ has \PropertyN, resp.~has  a negation map defined componentwise.   If each $(\mathcal A_\ell, {\mathcal A_\ell}_0)$ has a pre-surpassing relation, the  pre-surpassing relation is defined componentwise. Thus the direct product is a categorical product in
the various categories under consideration.
\end{lemma}
\begin{proof}
    Check componentwise.
\end{proof}

In another class of categories,  the base set  $\tT$ is  fixed, continuing~\cite{AGR2}. This would be appropriate to module theory, cf.~\cite{Row24}, which was not considered in depth in this paper, since $\tT$-modules are not  \admissible\ in general.


\bibliography{bibliolouis}
\bibliographystyle{alpha}
\end{document}